\documentclass[11pt, twoside]{amsart}

\title[]{Thompson's Group $V$ and Virtual Link Theory}

\author[M. Chrisman]{Micah Chrisman}
\address{Chrisman: Department of Mathematics, The Ohio State University at Marion, 1465 Mount Vernon Avenue, Marion, OH 43302}
\email{chrisman.76@osu.edu}

\author[L. Liles]{Louisa Liles}
\address{Liles: Department of Mathematics, Oberlin College, 10 N. Professor Street, Oberlin, OH 44074}
\email{lliles@oberlin.edu}

\author[M. Molander]{Melody Molander}
\address{Molander: Department of Mathematics, The Ohio State University, 100 Math Tower, 231 W 18th Ave, Columbus, OH 43210}
\email{molander.3@osu.edu}

\usepackage{amsmath}
\usepackage{amsfonts}
\usepackage{amsthm}
\usepackage{svg}
\usepackage{amssymb,bbm}
\usepackage[mathscr]{euscript}
\usepackage{dsfont}
\usepackage{stmaryrd}
\usepackage{appendix}
\usepackage[english]{babel}
\usepackage{url}
\usepackage{fancyhdr}
\usepackage{graphicx}
\usepackage{verbatim}
\usepackage[normalem]{ulem}
\usepackage[shortlabels]{enumitem}
\newcommand{\stkout}[1]{\ifmmode\text{\sout{\ensuremath{#1}}}\else\sout{#1}\fi}
\usepackage[
colorlinks=true,
linkcolor=black, 
anchorcolor=black,
citecolor=black,
urlcolor=black, 
]{hyperref}
\usepackage[all,cmtip]{xy}
\usepackage{geometry}
\usepackage[dvipsnames]{xcolor}
\usepackage{lscape} 
\usepackage{multicol}
\usepackage{shortcuts}
\usepackage[utf8]{inputenc}
\usepackage{tikz}
\usepackage{tikz-cd}
\usepackage{overpic}

\allowdisplaybreaks

\usepackage{adjustbox}

\usepackage{enumitem}

\usepackage[notcite,notref,final]{showkeys}

\usepackage{calrsfs}
\DeclareMathAlphabet{\cal}{OMS}{zplm}{m}{n}

\DeclareMathAlphabet{\mathsf}{OT1}{cmss}{m}{n} % to math mathsf thinner

\usepackage{relsize}

\makeatletter
\DeclareRobustCommand\widecheck[1]{{\mathpalette\@widecheck{#1}}}
\def\@widecheck#1#2{%
    \setbox\z@\hbox{\m@th$#1#2$}%
    \setbox\tw@\hbox{\m@th$#1%
       \widehat{%
          \vrule\@width\z@\@height\ht\z@
          \vrule\@height\z@\@width\wd\z@}$}%
    \dp\tw@-\ht\z@
    \@tempdima\ht\z@ \advance\@tempdima2\ht\tw@ \divide\@tempdima\thr@@
    \setbox\tw@\hbox{%
       \raise\@tempdima\hbox{\scalebox{1}[-1]{\lower\@tempdima\box
\tw@}}}%
    {\ooalign{\box\tw@ \cr \box\z@}}}
\makeatother

\numberwithin{equation}{section}

\newtheorem{theorem}{Theorem}[subsection]

\newtheorem{proposition}[theorem]{Proposition}

\newtheorem{corollary}[theorem]{Corollary}
\newtheorem{lemma}[theorem]{Lemma}

\newtheorem{theorem*}{Theorem}
\newtheorem{thm}{Theorem}

\theoremstyle{definition}
\newtheorem{definition}[theorem]{Definition}

\newtheorem{remark}[theorem]{Remark}

\newtheorem{question}{Question}[section]

\makeatletter              % This sequence of commands will
\let\c@equation\c@theorem  % incorporate equation numbering
\makeatother
\numberwithin{equation}{section}

\makeatletter
\@namedef{subjclassname@2020}{%
  \textup{2020} Mathematics Subject Classification}
\makeatother

\subjclass[2020]{57K12, 20F38, 43A35}
\keywords{Thompson's groups, virtual links, unitary representations, quandles}

\tikzcdset{scale cd/.style={every label/.append style={scale=#1},
    cells={nodes={scale=#1}}}}

\begin{document}
\begin{abstract} 
Thompson’s groups $F \subset T \subset V$ were introduced in 1965 and have since found widespread application in fields as diverse as logic, group theory, homotopy theory, and lattice gauge theory. In 2014, V. F. R. Jones constructed unitary representations of $F$, factoring through a surjection from $F$ to isotopy classes of links in $S^3$. The second author extended Jones’ surjection to $T$, thereby constructing all isotopy classes of checkerboard colorable (CC) links in the thickened annulus. We complete this program for $V$, defining a surjection $\mathscr{L}_{V}$ from $V$ to virtual equivalence classes of CC links in thickened compact oriented surfaces. This yields a new oriented subgroup $\vec{V} \subset V$ containing Jones' oriented subgroups $\vec{F} \subset F$ and $\vec{T}\subset T$. We prove $\vec{V}$ realizes all oriented almost classical virtual links. We then construct unitary representations of $V$ and $\vec{V}$ from kei and operator quandle coloring invariants, respectively. 
\end{abstract}

\maketitle

\section{Introduction and Statement of Main Results}

Thompson's groups $F \subset T \subset V$ are certain groups of piecewise-linear maps on the interval $[0,1]$ and the circle $S^1$. In \cite{jones_unitary}, Jones suggested using unitary representations of Thompson's groups to build toy models of conformal field theories on $S^1$, where $F$ and $T$ are viewed as rough approximations to $\text{Diff}^+(S^1)$. Using a planar algebra of Conway tangles, Jones constructed a surjective map from $F$ to the set of isotopy classes of links in the $3$-sphere. This method yields unitary representations of $F$ and $T$. The program was further extended by the second author, who constructed a map from $T$ to links in the thickened annulus \cite{LL25}. Although $V$ has long been of interest in group theory (cf. Belk et al \cite{BBQS}) and its theory of unitary representations has been recently studied by Brothier and Jones \cite{brothier_jones_a,brothier_jones_b,brothier_classification}, $V$ has not yet received a knot-theoretic interpretation. Here we complete this geometric part of Jones' program for $F\subset T \subset V$.

Elements of $F$ yield links in $S^3$ due to their representation by a pair of \emph{planar} rooted bifurcating trees $(T_+,T_-)$. Elements of $T$ allow a cyclic permutation of the leaves of these trees, so that the corresponding links are annular. Elements of $V$, however, permit an arbitrary permutation of the leaves. Their representing diagrams are thus non-planar. Here we solve the non-planarity problem by restating it in terms of Kauffman's \emph{virtual link theory} \cite{kauffman_vkt}. This is a generalization of classical link theory that admits non-planar diagrams. Such diagrams can be realized as links $\mathcal{L} \subset \Sigma \times [0,1]$, where $\Sigma$ is a compact orientable surface. If for some representative $\mathcal{L}$ of a virtual link type $L$, $[\mathcal{L}]=0 \in H_1(\Sigma \times [0,1];\mathbb{Z}_2)$, then $L$ is called \emph{checkerboard-colorable (CC)}. We prove:

\begin{thm} \label{thm_intro_real} Every CC virtual link type $L$ is realized by an element of Thompson's group $V$. \end{thm}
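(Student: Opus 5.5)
The natural route follows Jones' proof that $F$ surjects onto links in $S^3$ and its extension to $T$ in \cite{LL25}, with the leaf permutation in $V$ absorbing non-planarity through virtual crossings. The first step is to fix the construction of $\mathscr{L}_V$. An element of $V$ is a triple $(T_+,\sigma,T_-)$. We place $T_+$ above and the reflection of $T_-$ below, and join the $n$ leaves of $T_+$ to those of $T_-$ according to $\sigma$, drawing every crossing of these connecting arcs as a virtual crossing. Jones' medial-graph recipe then turns the Tait-type graph $\Gamma(T_+,\sigma,T_-)$ into a link diagram. Trivalent vertices become classical crossings, leaf connections become strands, and the virtual crossings are inherited. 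The result $\mathscr{L}_V(g)$ should be well defined under the caret-addition relations defining $V$. This is the same local check as for $F$, since adding a caret at paired leaves is a local move that commutes with the virtual strands. It should also be CC. The diagram is the medial diagram of a graph embedded in a surface built from the two planar tree halves glued by ribbons along $\sigma$, and a medial link is null-homologous mod $2$ because its complementary regions are checkerboard colored by vertices and faces.

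For surjectivity I would reverse this. Take a CC virtual link type $L$ and choose a representative $\mathcal{L}\subset\Sigma\times[0,1]$ with $[\mathcal{L}]=0$ in $H_1(\Sigma\times[0,1];\mathbb{Z}_2)$. Its diagram on $\Sigma$ is then checkerboard colorable, so it is the medial diagram of a signed Tait graph $G$ cellularly embedded in $\Sigma$. Equivalently, $G$ is an abstract signed graph with a ribbon (rotation) structure. The two Tait graphs and the dual choice should not matter. The task is now to realize such a signed ribbon graph, up to moves preserving the virtual link type, as $\Gamma(T_+,\sigma,T_-)$. Jones' argument for $F$ does this for plane graphs in three moves: make the graph into a standard form with vertices on a line, subdivide edges and add leaves or loops to control signs and degrees (these are Reidemeister I/II-type changes at the link level), and recognize the resulting graph as a pair of planar trees. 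In our setting I would place all vertices of $G$ on a horizontal line. Each edge is then drawn in the upper half-plane, in the lower half-plane, or crossing the line, and any crossings between edges are declared virtual. This is legitimate because virtual link type depends only on the Gauss diagram, i.e.\ the ribbon graph data, and not on how the diagram is immersed in the plane. After this, only the rotation system at each vertex has to be encoded. I would realize it by choosing the leaf permutation $\sigma$ so that the cyclic order of half-edges at each vertex matches the rotation of $G$ in $\Sigma$.

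The main obstacle is this final matching step. The $F$ and $T$ arguments use planarity to guarantee that the upper and lower parts are nested arc systems, and hence come from binary trees. Here I would expect to have to show that any signed ribbon graph can be modified by moves that leave the link unchanged (Reidemeister moves, virtual moves, and subdivisions with sign-compensating edges) into a graph whose edges split into an upper non-crossing family and a lower non-crossing family, joined through a single permutation. I would first try to induct on the number of edges. Each edge that would cross another in the same half is pushed across the line, and the resulting crossings are absorbed into $\sigma$, which costs only virtual crossings. Jones' trick of inserting degree-two vertices then makes the upper and lower arc systems into the standard forms of $T_+$ and $T_-$. A last caret-addition check confirms that the choices made do not change the virtual link type.
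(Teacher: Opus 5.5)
There is a genuine gap at exactly the step you call the main obstacle, and the mechanism you sketch for it would not work. A single leaf permutation only reorders the vertices of the upper half relative to the lower half. So what you actually need is two things: the lower subgraph $G_-$ is already non-crossing, and the upper subgraph becomes non-crossing after a reordering of its vertices, i.e.\ $G_+=\sigma\cdot H_+$ with $H_+$ planar. This reordering must keep the cyclic orders at each vertex, including where the half-edges into the lower half-plane sit.

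For a general standard virtual Tait graph this fails. For instance, take upper semicircles $0$--$1$, $0$--$2$, $1$--$3$, $2$--$3$, whose only crossing ($0$--$2$ with $1$--$3$) is virtual. They form a $4$-cycle whose ribbon neighborhood has the downward direction on different boundary components at vertices $1$ and $3$. So no reordering of the vertices yields a non-crossing upper drawing with the same rotations.

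Pushing an offending edge across the axis does not repair this. At a vertex on the axis, the upper and lower half-edges occupy complementary arcs of the rotation, so moving an edge to the other half generally changes the cyclic order, and hence the link. Even when the move is permitted, it only transfers the crossing to the lower half. Likewise, $\sigma$ permutes vertices, not half-edges, so it cannot be ``chosen to match'' an arbitrary rotation system.

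The missing idea is to put the graph into Thompson form before looking for $\sigma$; this is the order the paper uses.
\begin{enumerate}
\item Place the $1$-handles of the screen in the first quadrant, so all virtual crossings lie in the upper half, and make the graph standard (Lemma \ref{lemma_prepare_standard}).
\item Apply Jones' Type RI, RIIa and RIIb moves. These are more than inserting degree-two vertices, and they are supported near the axis inside the $0$-handle, so they make the graph Thompson without touching any virtual crossing (Lemma \ref{lemma_virtual_thompson}).
\item Now $G_-$ is planar, and $G_+$ is a tree in which every vertex other than $0$ has exactly one edge to its left. A tree carries no genus to obstruct untangling. Lemma \ref{lemma_realize_algo} produces $\sigma$ by repeatedly cycling $v',w_1,\dots,w_k,u'\mapsto u',v',w_1,\dots,w_k$. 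At each step it checks that the Thompson property survives and that the first problematic edge moves strictly to the right. Listing the vertices of this rooted plane tree in depth-first order from $0$ would also work.
\item Lemma 4.1.4 of \cite{jones_unitary}, applied to $(H_+,G_-)$, then gives $T_\pm$.
\end{enumerate}

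Separately, your claim that $\mathscr{L}_V$ is invariant under adding carets is false. A canceling caret adds a split unknot (Lemma \ref{lem: cancelingcarets-resultsinunknot}), which is why the map is defined on reduced triples.
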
 

Note that the realization algorithms for $F$ and $T$, from \cite{jones_unitary} and \cite{LL25}, respectively, depend on an initial checkerboard coloring of the link diagram. In the case of $F$, the checkerboard coloring is for diagrams on the $2$-sphere $S^2$ and for $T$, it is for diagrams on the annulus $S^1 \times [0,1]$. Likewise, our map $\mathscr{L}_V$ from $V$ to virtual link diagrams produces \emph{only} CC virtual link diagrams and, by Theorem \ref{thm_intro_real}, \emph{all} CC virtual link types are in the image of $\mathscr{L}_V$. Furthermore, just as Jones' realization theorem of links in $S^3$ gave rise to new ``oriented" subgroups $\vec{F} \leq F$ and $\vec{T} \leq T$ \cite{jones_unitary}, Theorem \ref{thm_intro_real} allows for the identification of an oriented subgroup $\vec{V} \leq V$. Elements of $\vec{V}$ map to oriented virtual links with orientation induced by an oriented spanning surface. Consequently, virtual links arising from $\vec{V}$ are \emph{almost classical (AC)} in the sense of Silver-Williams \cite{silver_williams}. Conversely, we prove a generalization of Aiello's theorem for $\vec{F}$ \cite{aiello_oriented}:

\begin{thm} \label{thm_intro_oriented} Every oriented AC virtual link type is realized by an element of the subgroup $\vec{V} \subset V$. 
\end{thm}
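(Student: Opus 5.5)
We will imitate Aiello's proof for $\vec{F}$, with Theorem \ref{thm_intro_real} and its algorithm as the template. Recall the characterization of almost classical links that we will use. An oriented virtual link $L$ is AC iff it has a representative $\mathcal{L}\subset \Sigma\times[0,1]$ that bounds an oriented (Seifert) surface in $\Sigma\times[0,1]$. Equivalently, some diagram $D$ of $L$ on $\Sigma$ admits a checkerboard coloring whose Seifert-type state is compatible with the orientation. Concretely, this means an Alexander numbering of the faces of $D$ on $\Sigma$. Reducing that numbering mod $2$ gives a checkerboard coloring in which, at each crossing, the orientation agrees with the orientation induced as the boundary of the shaded (black) region.

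The plan has three steps.

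\textbf{Step 1.} Start from an oriented AC link $L$ and choose a diagram $D$ on a closed surface $\Sigma$ with an Alexander numbering. Adding nugatory handles if needed, we arrange that $D$ is cellularly embedded. We then isotope $D$ so that the shaded surface is the oriented Seifert surface $S$. In this form $D$ is the boundary of a ribbon-graph (Tait graph) structure on the black regions, and the edges of the Tait graph $\Gamma$ have a consistent direction induced by the orientation. This is exactly the data used for the oriented subgroup: Jones' $\vec{F}$ is the stabilizer of the bipartite (two-colorable) condition on the associated graph. Correspondingly, $\vec{V}$ consists of those elements whose Tait graph (built from the tree pair $(T_+,T_-)$ and the permutation) is bipartite.

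\textbf{Step 2.} Run the realization algorithm from the proof of Theorem \ref{thm_intro_real} on this checkerboard-colored diagram, keeping track of orientation. That algorithm presents the Tait graph $\Gamma$ in a standard form: vertices on a line, edges above and below forming the two trees, with the virtual crossings encoding the leaf permutation. Aiello handles the oriented case by first modifying the diagram with Reidemeister moves that preserve the Alexander numbering, until the Tait graph is bipartite with the correct vertex ordering. We will do the same, using moves on $\Sigma$ (R2 moves creating new bigon faces, plus handle stabilizations), which are virtual equivalences. The Alexander numbering parity gives the bipartition of the white regions, so these moves can be chosen to keep the numbering globally defined.

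\textbf{Step 3.} Check that the element $g\in V$ produced in Step 2 lies in $\vec{V}$, i.e.\ that its graph is bipartite. Then check that the orientation induced by the $\vec{V}$ convention matches the orientation of $L$ and not its reverse or a mixed orientation. If it is mismatched, we fix it by an extra stabilization on the components involved, or by reversing all components globally, which is available since the Seifert surface orientation can be flipped.

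The main obstacle is Step 2. The Thompson normal form forces a specific linear ordering of the vertices, namely the leaves of $T_\pm$, and a specific root structure. The bipartite/Alexander-numbering constraint has to survive the conversion of an arbitrary Tait graph on $\Sigma$ into a tree-pair form. For $F$, Aiello needed a delicate argument to do this conversion. Here the permutation in $V$ lets us reorder leaves freely, which should help, because we no longer need planarity of the interleaving. What remains is to show that the edge-subdivision moves used to make each side a binary tree can always be done in pairs preserving bipartiteness. Our first attempt will be to subdivide by inserting two consecutive R2-type bigons at a time, so that parity is preserved.
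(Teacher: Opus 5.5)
There is a genuine gap. Your outline (oriented checkerboard surface $\to$ oriented virtual Tait graph $\to$ standard $\to$ Thompson $\to$ permutation and tree pair) matches the paper's, but the decisive step is the one you leave open at the end of Step 2. The missing idea is that nothing needs to be redone. Aiello's sequence of oriented Type RI, RIIa, RIIb moves \cite{aiello_oriented} turns a standard oriented Tait graph into an oriented Thompson graph, and it is supported in a thin neighborhood of the $x$-axis. That neighborhood can be taken inside the $0$-handle of the screen, disjoint from every virtual crossing. So the moves apply verbatim to the virtual graph, exactly as Jones' unoriented moves do in Lemma~\ref{lemma_virtual_thompson}. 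After that, Lemma~\ref{lemma_realize_algo} only reorders the vertices $1,\dots,N$, preserving edge endpoints and cyclic orders. Vertex signs therefore travel with the vertices, and the output triple automatically satisfies $\sigma(n(T_+))=\pm n(T_-)$. So the permutation plays no role in handling orientation, and your ``subdivide in pairs'' idea is only a first attempt aimed at parity, not a proof.

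Two other steps are wrong as stated. In Step 1, an Alexander numbering exists for every diagram of a $\mathbb{Z}$-null-homologous link, but its mod $2$ reduction need not give an orientable black surface. For example, both checkerboard surfaces of the standard figure-eight diagram are punctured Klein bottles, since both Tait graphs contain a triangle. The real input is \cite[Proposition 5.8]{boden_chrisman_karimi}: there is a minimal-genus diagram whose black surface is a connected oriented surface inducing the orientation of $L$. Minimal genus already gives cellularity, so no handles should be added. Your sentence ``isotope $D$ so that the shaded surface is $S$'' silently assumes this result. In Step 3, flipping the orientation of the surface reverses every component, so it realizes the reverse of $L$, which is in general not equivalent to $L$. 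Also, since the surface is connected, a ``mixed'' mismatch cannot occur. The correct fix is to arrange, during standardization, that the vertex placed at $(0,0)$ is a $+$ vertex; this is the vertex fixed by $\sigma$, whose disc the convention of Section~\ref{sec_definition_vec_V} declares positive. Such a vertex exists because the oriented surface is connected with at least one edge, so adjacent vertices carry opposite signs.
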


Jones' technology produces unitary representations for Thompson's groups $F \subset T \subset V$, and more generally, for categories admitting a certain group of fractions \cite{jones_no_go}. As shown in \cite{jones_no_go}, this does not succeed in producing conformal field theories from planar algebras as originally intended. However, more recent work of Brothier-Stottmeister  \cite{brothier_stottmeister_1,brothier_stottmeister_2}, has shown that Jones' program instead has intersections with lattice gauge theory, loop quantum gravity, and Yang-Mills theory. This has renewed a desire to understand unitary representations of $F$, $T$, and $V$, with the group $V$ being of particular interest (see, e.g. \cite{brothier_jones_a,brothier_jones_b, brothier_survey,brothier_wreath,brothier_classification}). 

Using Theorems \ref{thm_intro_real} and \ref{thm_intro_oriented}, we construct unitary representations of $\vec{V}$ and $V$ by composing with virtual link invariants. This result extends previous work of Aiello, Conti, and Jones \cite{jones_unitary,AC19,ac19oriented, aiellocontijones}, who establish invariants of links in $S^3$ as positive definite functions on $F$ and $T$. Via the Gelfand–Naimark–Segal correspondence between positive definite functions and unitary representations \cite{GN43, Seg47}, it follows that these invariants can be recovered from unitary representations of $F$ and $T$. For Thompson's group $V$, we focus on an infinite family of virtual link invariants called operator quandle colorings (see e.g. \cite{KMV25}). Positive definite functions of $V$ and $\vec{V}$ are then defined by counting the operator quandle colorings of a virtual link type: 

\begin{thm}\label{intro-thm-operator-quandle}
Let $(Q,*)$ be a finite quandle.
\begin{enumerate}
    \item If $(Q,*)$ is an involutive quandle, then there is a quandle coloring function $V\to \mathbb{C}$ which is positive definite on $V$.
    \item For every automorphism of $(Q,*)$, there is an operator quandle coloring function $\vec{V}\to \mathbb{C}$ which is positive definite on $\vec{V}$.
\end{enumerate}
Consequently, each of these virtual link invariants can be recovered from a unitary representation of the corresponding group. 
\end{thm}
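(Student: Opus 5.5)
The plan is to follow the Jones/Aiello--Conti--Jones strategy: realize each quandle coloring count as a vector coefficient $\langle \pi(g)\Omega,\Omega\rangle$ of a representation coming from Jones' construction. Concretely, build a functor $\Phi$ from the category of forests with permutations (whose group of fractions is $V$) to finite-dimensional Hilbert spaces. The object $n$ (number of leaves/roots) is sent to $\mathbb{C}[Q]^{\otimes n}$, or rather to the colorings of the $2n$ (or $n$) boundary points of the tangle associated to $n$ strands. A single caret is sent to the linear map encoded by the elementary tangle that $\mathscr{L}_V$ attaches to a caret. This map sums over all colorings of the interior arcs compatible with the quandle relation at the crossing, and is normalized by a scalar so that it becomes an isometry. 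A permutation of leaves is sent to the corresponding tensor-leg permutation. Here the virtual crossings in $\mathscr{L}_V$ carry no coloring relation, which is exactly why permutations act by permuting tensor factors.

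Jones' machinery then produces a unitary representation $\pi$ of $V$ on the direct limit Hilbert space, together with a vacuum vector $\Omega$. For $g=(T_+,\sigma,T_-)$, the coefficient $\langle\pi(g)\Omega,\Omega\rangle$ equals $\langle \Phi(T_-)^*\Phi(\sigma)\Phi(T_+)\Omega,\Omega\rangle$. Its diagrammatic meaning is gluing $T_+$ to the reflected $T_-$ through $\sigma$ and closing off with vacua, which is precisely the diagram $\mathscr{L}_V(g)$.

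The steps, in order, are as follows.
\begin{enumerate}
\item Check that closing with vacua computes $c\cdot \#\mathrm{Col}_Q(\mathscr{L}_V(g))$, with $c$ an explicit normalization depending only on the number of leaves and crossings. Use involutivity, $(x*y)*y=x$, to make the coloring rule independent of the strand orientation. This matters because unoriented CC diagrams have no canonical orientation; kei colorings are orientation-free, so the count is well defined.
\item Show the count is a virtual link invariant; this is standard (Reidemeister and virtual moves).
\item Check that the normalization is compatible with stabilization. This means that adding a caret pair corresponds to a Reidemeister-type move, or adds a split unknot, scaling by $|Q|$, and the normalization must absorb this. I would choose the vacuum and the caret isometry so that $c$ becomes $1$, possibly after rescaling the function by a positive factor depending on $g$ only through invariants of the isotopy class. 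If that fails, I would instead use the fact that a positive multiple $|Q|^{k}$ with $k$ counting components is still positive definite after a careful choice.
\end{enumerate}
A function of the form $g\mapsto\langle\pi(g)\Omega,\Omega\rangle$ is automatically positive definite, and GNS gives the final sentence of the statement.

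For part (2), restrict to $\vec V$. There $\mathscr{L}_V(g)$ carries the orientation induced by the spanning surface, so every crossing has a well-defined sign and we may use the non-involutive operation $x*y$ and its inverse. An automorphism $\phi$ of $Q$ defines an operator quandle coloring, namely a twisted count where the color jumps by $\phi$ across a chosen arc or the boundary of the surface regions. I would encode $\phi$ as an extra operator inserted on the shaded/unshaded regions in $\Phi$. The oriented subgroup is exactly where the shading-based orientation is consistent, so $\Phi$ restricted to the oriented forest category gives a representation of $\vec V$ and the same coefficient argument applies.

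The main obstacle I expect is step 3 together with its oriented analogue: verifying that the caret maps are genuinely isometric, so that $\pi$ is unitary. Compatibility with caret cancellation may force replacing each color space by the space on boundary points of the $\mathscr{L}_V$ caret tangle and choosing a nonstandard vacuum. I would first try the uniform vacuum $|Q|^{-1/2}\sum_q q$ and compute directly.
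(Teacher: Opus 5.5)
Your route differs from the paper's. Part (1) goes through, but part (2) has a genuine gap.

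\textbf{Part (1): a different but workable route.} The paper never builds a representation directly.
\begin{enumerate}
\item It sets $\mathrm{Col}(v)=|Q|^{-n}\mathrm{Col}(\mathscr{L}_V(T_+,T_-,\sigma))$. This is well defined because a cancelling caret adds a split unknot (Lemma~\ref{lem: cancelingcarets-resultsinunknot}).
\item It chooses representatives of $v_1,\dots,v_r$ with a common tree $T_-$, so that $v_iv_j^{-1}$ is $(T_+^i,T_+^j,\sigma_j^{-1}\sigma_i)$.
\item It splits the state sum over the $2n$ middle strands, which exhibits the matrix as a Gram matrix $\langle f_i,f_j\rangle$ in $\ell^2(Q^{2n})$. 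GNS then supplies the representation.
\end{enumerate}
Your Jones/Brothier--Jones functor packages the same computation: the paper's $f_i$ is, up to normalization, the level-$T_-$ representative of $\pi(v_i)\Omega$. It gives you the representation directly, at the cost of checking functoriality. To make it work you need three choices:
\begin{enumerate}
\item Take two strands per band, $H_n=(\mathbb{C}[Q]^{\otimes 2})^{\otimes n}$.
\item Take the vacuum to be the cap $|Q|^{-1/2}\sum_q q\otimes q$, not a uniform vector on one strand. The trivial tree's semi-link is an arc joining the root strand to the leaf strand, and a uniform vector would not force the two root strands to carry the same colour.
\item Take the caret map to be $|Q|^{-1/2}$ times the one-crossing colouring map. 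This is an isometry because right translations are bijective.
\end{enumerate}
The coefficient is then exactly $|Q|^{-n}\mathrm{Col}(\mathscr{L}_V(g))$, so your step 3 settles itself. Drop the fallback of rescaling by $|Q|^{k(g)}$: multiplying by a non-constant function of $g$ does not preserve positive definiteness.

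\textbf{Part (2): the gap.} The twisted count you describe is not the invariant in the statement. In an operator quandle colouring the automorphism acts only at \emph{virtual} crossings: $a_{\mathrm{out}}=\alpha(a_{\mathrm{in}})$ and $b_{\mathrm{out}}=\alpha^{-1}(b_{\mathrm{in}})$. Classical crossings use $*$ or $/$ according to sign. Colours do not jump across a chosen arc or region boundaries, and an operator placed on shaded or unshaded regions has no meaning when colours live on arcs.

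So in (2) virtual crossings do carry a relation, and $\alpha$ must be built into $\Phi(\sigma)$, which becomes a strand permutation twisted by powers of $\alpha^{\pm1}$. You then have to check three things:
\begin{enumerate}
\item It is unitary. This holds because it is a permutation matrix, $\alpha$ being bijective.
\item It is independent of how $D(\sigma)$ is drawn.
\item It is compatible with sliding carets through it, as in Lemmas~\ref{lem: slide up} and~\ref{lem: slide down}. These are detour moves, and this is where $\alpha$ being an automorphism is used.
\end{enumerate}
For $g\in\vec V$ this turns out to be harmless. The two edges of a band of an oriented surface are antiparallel, so a strand crossing a band picks up $\alpha$ and $\alpha^{-1}$ in succession, and the twisted operator reduces to the plain band permutation. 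What (2) really needs is that the oriented crossing maps for non-involutive $Q$ are isometries, which holds exactly as in (1).

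Finally, ``restricting to the oriented forest category'' needs objects labelled by sign sequences, because the caret map depends on the sign of the band it splits. The group of fractions at the one-leaf object then contains only the triples with $\sigma(n(T_+))=+n(T_-)$. You must still say how $\pi(g)$ acts when $\sigma(n(T_+))=-n(T_-)$. There the middle strands acquire reversed orientation, and for non-involutive $Q$ reversal interchanges $*$ and $/$. The paper's Gram-matrix argument avoids building a functor, but as written it also leaves this orientation bookkeeping implicit.
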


Since dihedral quandles are involutive and Fox $n$-colorings are dihedral quandle coloring invariants, the positive definite functions in Theorem \ref{intro-thm-operator-quandle} extend those derived from classical Fox $n$-colorings by Aiello and Conti \cite{AC19}. By Gra\~{n}a \cite{grana}, all quandle coloring invariants for virtual links are quantum invariants. Theorem \ref{intro-thm-operator-quandle} thus shows that a large class of quantum invariants of virtual links yield unitary representations of $V$ and its oriented subgroup $\vec{V}$.

This project was partly inspired by a question raised in \cite{KTVF}. There, Kodama and Takano used the theory of diagram groups to construct a \textit{virtual Thompson group} $\mathit{VF}$, which contains $F$ as a subgroup and gives rise to all virtual links. In \cite{KTVF}, the authors ask how $\mathit{VF}$ relates to other generalizations of $F$. On the level of diagrams, the elements of $\mathit{VF}$ are represented by pairs of \emph{planar} rooted bifurcating trees $(S_+,S_-)$ where the edges are labeled as either classical or virtual. Hence, in the case of $\mathit{VF}$, the non-planarity problem does not arise. In the present paper, however, the flexibility afforded by non-planar virtual link diagrams is their fundamental feature, since it allows us to directly represent the inherent non-planarity of Thompson's group $V$. Consequently, we obtain unitary representations of $V$ from virtual link invariants. By introducing a common target space of virtual links, we hope that the relationship between $\mathit{VF}$ and $V$ can be clarified with future investigation. More broadly, our construction raises natural questions about how Thompson-like groups encode the geometric properties of virtual links.

Our paper is organized as follows. Section \ref{sec: background} introduces Thompson's groups and previous methods of building links in $S^3$ and annular links from elements of $F$ and $T$, respectively. For the reader's convenience, a review of virtual links, CC virtual links, and virtual Tait graphs is given in Section \ref{sec_virtual_links} and \ref{sec_virt_tait_graphs}. Section \ref{sec: construction} defines our map from $V$ to the set of checkerboard colorable virtual link types. Theorem \ref{thm_intro_real} is proved in Section \ref{sec_real}. An example calculation is given in Section \ref{sec_example}. This illustrates the realization algorithm for the vertical mirror image of the CC virtual knot 4.105 from Green's table \cite{green}. A comparison with $\mathit{VF}$ is then given in Section \ref{sec_example}. The oriented subgroup is defined in Section \ref{sec: oriented}, which also contains the proof of Theorem \ref{thm_intro_oriented}. Section \ref{sec: op colorings} gives a brief review of quandles, operator quandle colorings, and positive definite functions. The proof of Theorem \ref{intro-thm-operator-quandle} appears in Section \ref{sec_proof_of_C}. Section \ref{sec: openq} explores future directions and related open questions. 

\section*{Acknowledgments}
LL and MM were each supported in part by an AMS-Simons Travel Grant. All of the authors would like to thank S. V. Chmutov for helpful conversations about Thompson's groups and links. 

\section{Background}\label{sec: background}

Section \ref{sec_F_T_V} reviews the groups $F\subset T \subset V$. In Section \ref{sec_strand}, we review strand diagrams and prove some lemmas about diagrammatic composition in $V$ which will be needed ahead in Section \ref{sec: oriented}. Virtual links, checkerboard colorable virtual links, and almost classical links are reviewed in Section \ref{sec_virtual_links}. Finally, in Section \ref{sec_virt_tait_graphs}, we review virtual spanning surfaces and virtual Tait graphs.

\subsection{Thompson's Groups $F \subset T \subset V$} \label{sec_F_T_V}
Elements of $F$ are piecewise linear self-homeomorphisms of $[0,1]$ such that all points of non-differentiability are dyadic rational numbers (i.e. of the form $\frac{a}{2^b},a, b\in \mathbb{Z}_{\geq 0}$) and all derivatives are powers of $2$.  For example, \[g(t)=\begin{cases} \frac{1}{2}t & 0 \leq t \leq \frac{1}{2}\\ t-\frac{1}{4} & \frac{1}{2} \leq t \leq \frac{3}{4} \\ 2t-1 & \frac{3}{4} \leq t \leq 1\end{cases}\] is an element of $F$. Points of non-differentiability are $\frac{1}{2}$ and $\frac{3}{4},$ and the derivatives are $2^{-1}, 2^0,$ and $2^1$. The group operation in $F$ is composition of functions.

Elements of $F$ can be conveniently encoded as ordered pairs of planar binary trees with the same number of leaves. The details of this correspondence can be found in \cite[Section 2]{cfpnotes}. Informally, the first tree in the ordered pair corresponds to a partition of the domain, and the second tree corresponds to the image of that partition. For example, the pair of trees in Figure \ref{fig: F trees} encodes the map $g$ defined above. The first tree comes from the partition \(\{[0, \frac{1}{2}] \cup [\frac{1}{2}, \frac{3}{4}]\cup [\frac{3}{4},1]\}\) and the second tree comes from the partition \(\{[0,\frac{1}{4}]\cup [\frac{1}{4},\frac{1}{2}] \cup [\frac{1}{2},1]\}\), where each interval is the image of an interval in the first partition, e.g. $g([0, \frac{1}{2}])=[0,\frac{1}{4}].$ As a shorthand for depicting an ordered pair of trees, the second tree is often reflected over the horizontal axis and attached to the first tree at the leaves, as in Figure \ref{fig: F trees}. We will use this notation throughout the rest of the paper.  

\begin{figure}
\includegraphics[scale=0.3]{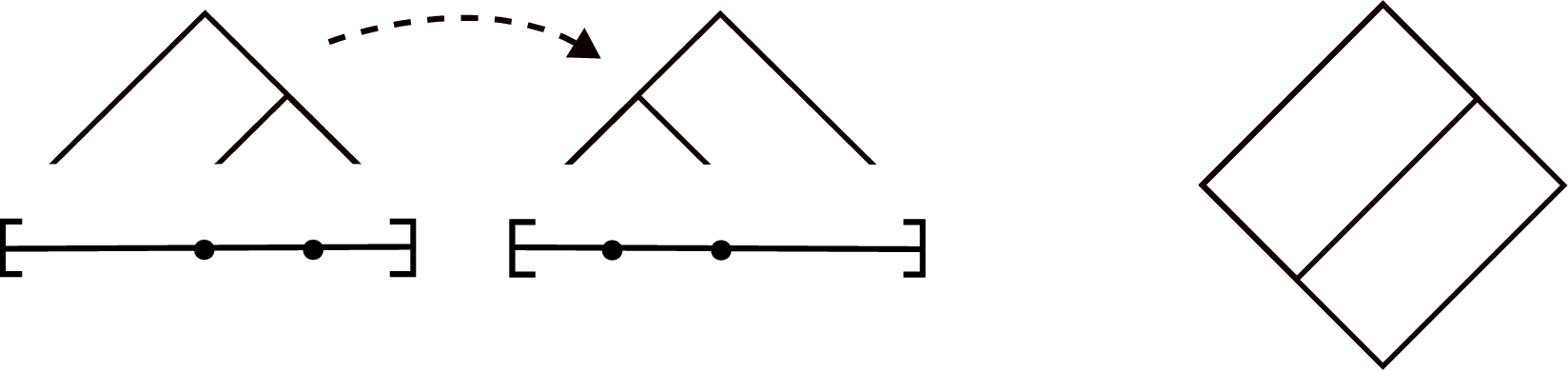}\put(-170,60){$g$}\put(-210,-3){$\frac{1}{2}$}\put(-193,-3){$\frac{3}{4}$}\put(-150,-3){$\frac{1}{4}$}\put(-132,-3){$\frac{1}{2}$}
\caption{Encoding $g \in F$ as a pair of trees.}\label{fig: F trees}\end{figure}

Any ordered pair of planar, rooted, binary trees with the same number of leaves determines an element of $F$, but two such pairs may represent the same function if they differ by \textit{canceling carets}, see Figure \ref{fig: F carets}. On the level of functions, canceling carets correspond to unnecessarily refining partitions. For example, rewriting $g$ as \[g(t)=\begin{cases} \frac{1}{2}t & 0 \leq t \leq \frac{1}{4}\\ \frac{1}{2}t & \frac{1}{4} \leq t \leq \frac{1}{2}\\ t-\frac{1}{4} & \frac{1}{2} \leq t \leq \frac{3}{4} \\ 2t-1 & \frac{3}{4} \leq t \leq 1\end{cases}\] gives the pair of trees shown above. A pair of trees with no canceling carets is called \textit{reduced}.

\begin{figure}
\includegraphics[scale=0.3]{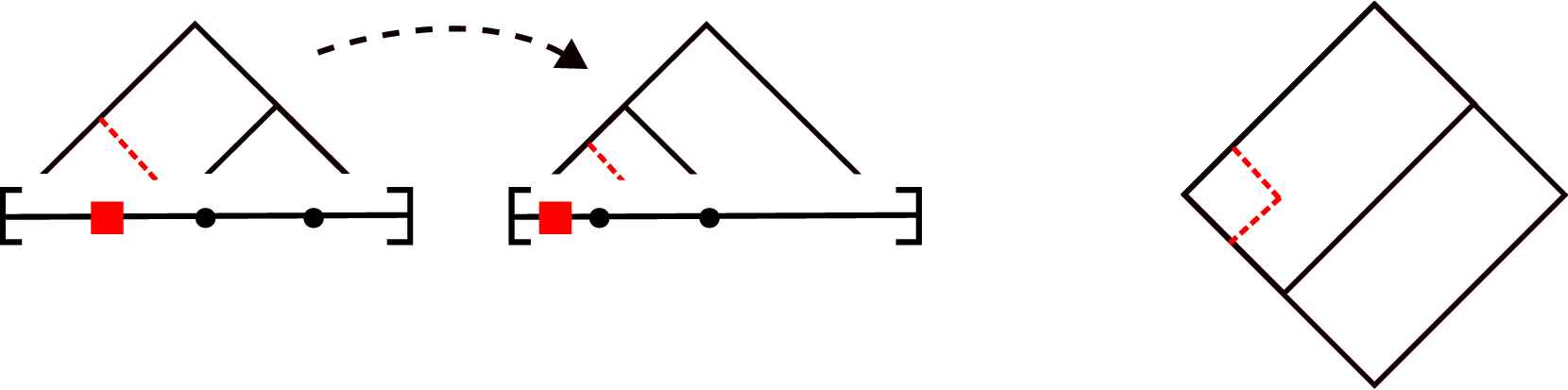}\put(-175,60){$g$}\put(-212,0){$\frac{1}{2}$}\put(-225,0){$\frac{1}{4}$}\put(-195,0){$\frac{3}{4}$}\put(-160,0){$\frac{1}{8}$}\put(-150,0){$\frac{1}{4}$}\put(-134,0){$\frac{1}{2}$}
\caption{A canceling caret in Thompson's group $F$.}\label{fig: F carets}\end{figure}

Thompson's group $F$ is a subgroup of the larger group $T$, whose elements are piecewise linear self-homeomorphisms of $[0,1]/0 \sim 1$, subject to the same conditions on differentiability and derivatives as elements of $F$. For example, \[h(t)=\begin{cases} \frac{1}{2}t+\frac{1}{4}& 0 \leq t \leq \frac{1}{2}\\ 2t-\frac{1}{2} & \frac{1}{2} \leq t \leq \frac{3}{4}\\ t-\frac{3}{4} & \frac{3}{4} \leq t \leq 1\end{cases}\] is an element of $T$ but not $F$. A function $g \in T$ still has an associated ordered pair of trees, but this pair alone is not enough to specify the function, as the leaves may be attached according to any cyclic permutation of the $n$ leaves. Moreover, this permutation is completely determined by where one leaf is sent. If we use the positive integer $k \leq n$ to specify that the first leaf of the top tree is sent to the $k$-th leaf of the bottom tree, then every element of $T$ can be encoded as a triple $(T_+,T_-;k)$ where $T_+$ and $T_-$ are trees. For example, the element $h$ above has $n=3,k=2$. We indicate $k$ graphically by adding a decoration to the $k$th leaf of $T_-$, as in Figure \ref{fig: triple t}. Canceling carets (and therefore unreduced triples) still occur in $T$ if partitions are refined unnecessarily, however they become less diagrammatically obvious, now that the leaves are not always attached from left to right. The subgroup $F$ is the set of elements for which $k=1$.
\begin{figure}
    \includegraphics[scale=0.4]{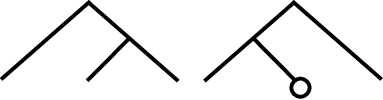}\caption{The decorated pair of trees belonging to $h(t) \in T$. Here, $k=2$.}\label{fig: triple t}
\end{figure}

Thompson's group $V$ contains $T$ as a subgroup. Elements of $V$ are right-continuous piecewise linear bijections of $[0,1]$, subject to the usual conditions on differentiability and derivatives. Relaxing the assumption about continuity means that a function in $V$ may now map intervals according to any permutation. For example,

\[k(t)=\begin{cases} t+\frac{1}{2} & 0 \leq t < \frac{1}{2} \\ t-\frac{1}{4} & \frac{1}{2} \leq t < \frac{3}{4} \\  t - \frac{3}{4} & \frac{3}{4} \leq t \leq 1\end{cases}\] is in $V$ but not $T$.
    Elements of $V$ can be expressed as an ordered pair of planar, binary trees with $n$ leaves, together with a permutation $\sigma$ in $S_n$, the symmetric group on $n$ letters, which specifies how the trees are attached. The pair of trees on the left of Figure \ref{fig: V strand diag} give an example of a triple $(T_+,T_-,\sigma)$ associated to $k(t)$, and the permutation $\sigma$ is depicted by labeling the leaves of the second tree (see the left side of Figure \ref{fig: V strand diag}). As was the case in $T$, it is still possible for a triple $(T_+,T_-,\sigma)$ to be unreduced, but canceling carets are less visually obvious now that the leaves of the trees may be attached according to any permutation. 

    \begin{figure}\includegraphics[scale=0.5]{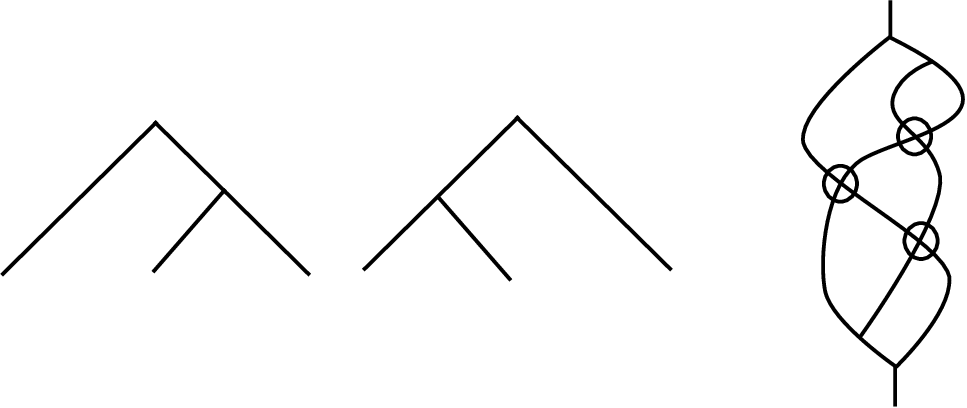}\put(-70,15){$1$}\put(-110,15){$2$}\put(-145,15){$3$}\caption{A triple $(T_+,T_-,\sigma) \in V$ and its \textit{abstract strand diagram} (see Definition \ref{def: abstract}).}\label{fig: V strand diag}
\end{figure}

\subsection{Strand diagrams}\label{sec_strand}

We have just described diagrammatic representations of elements in $F, T,$ and $V$. To encode the group operation, Belk introduced \textit{strand diagrams} \cite[Section 7.1]{belk_thesis}.

\begin{definition}\cite[Definition 2.6]{belkmatucci} An $(m,n)$\textit{-strand diagram} is a trivalent directed graph embedded in the unit square with $m$ univalent sources on the top edge and $n$ univalent sinks on the bottom edge. All edges have nonzero slopes and all interior vertices are either \textit{merges} or \textit{splits}.
\end{definition}

\begin{figure}\includegraphics[scale=0.3]{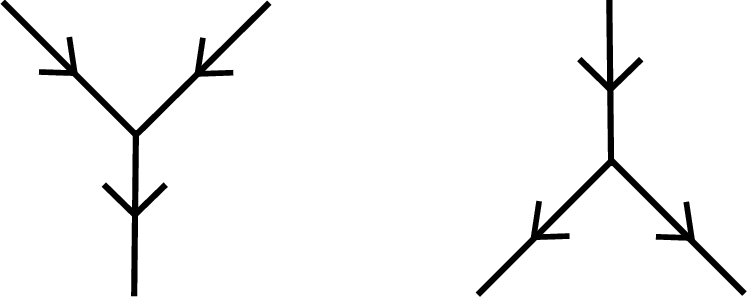}\caption{A merge (left) and  split(right).}\label{fig: merge split}\end{figure}
A strand diagram can be thought of like a braid, but instead of twists there are merges and splits, and the number of top endpoints is not necessarily equal to the number of bottom endpoints. If the edges of a strand diagram do not have arrows indicating a direction, we assume all edges to be given the downward orientation. Examples are shown in Figure \ref{fig: sd ex}. We consider strand diagrams up to planar isotopy and \textit{reductions}. 

    \begin{figure}\includegraphics[scale=0.5]{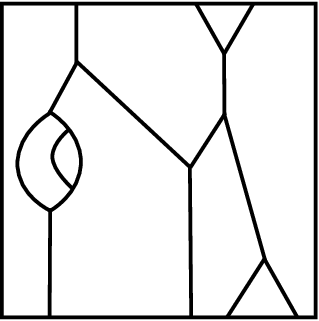}\qquad \includegraphics[scale=0.5]{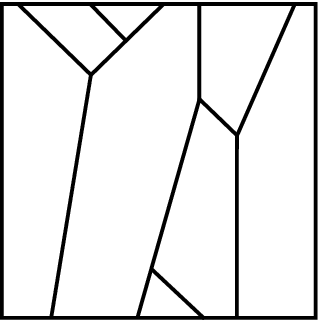}\caption{A$(3,4)$-strand diagram and a $(5,4)$-strand diagram. All edges are assumed to be oriented in the downward direction. The diagram on the right is reduced in the sense of Definition \ref{def: reduced} below, but the diagram on the left is not. }\label{fig: sd ex}\end{figure}

\begin{definition}\label{def: reduced} [\cite{belkmatucci}, Def. 2.1]
A reduction of a strand diagram is a move of Type I or II as depicted in Figure \ref{fig: strand diag moves}. A strand diagram is called \textit{reduced} if it is not subject to any reductions.
\end{definition}
\begin{figure}\includegraphics[scale=0.5]{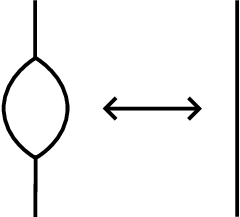}\qquad \includegraphics[scale=0.5]{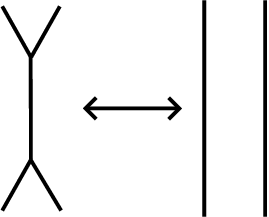}\caption{Type I (left) and Type II (right) moves defined in \cite{belkmatucci}.}\label{fig: strand diag moves}\end{figure}

Every strand diagram is equivalent to a unique reduced strand diagram, and the set of reduced $(1,1)$-strand diagrams, with group operation of concatenation and reduction, forms a group $\mathscr{R}.$ 
\begin{proposition}[\cite{belkmatucci}, Prop 2.5]
The group $\mathscr{R}$ is isomorphic to Thompson's group $F$.
\end{proposition}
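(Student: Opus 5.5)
The plan is to build an explicit isomorphism between $F$, viewed as the group of reduced tree pairs $(T_+,T_-)$ modulo canceling carets, and $\mathscr{R}$. Given a tree pair $(T_+,T_-)$ with $n$ leaves, I draw $T_+$ in the unit square as a tree of splits rooted at the top edge, and the reflection of $T_-$ as a tree of merges ending at the bottom edge. I then join the $i$-th leaf of $T_+$ to the $i$-th leaf of $T_-$; planarity is automatic because no permutation is involved. All edges can be drawn with nonzero slope and oriented downward, so this gives a $(1,1)$-strand diagram $\Phi(T_+,T_-)$. The map to $\mathscr{R}$ is $[T_+,T_-]\mapsto$ the reduction of $\Phi(T_+,T_-)$. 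This is well defined because inserting a canceling caret pair (a split immediately followed by the merge of its two children) is exactly a Type I move. Well-definedness then follows from uniqueness of reduced forms, which the paper states just before the proposition.

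Next I check that this is a homomorphism. For $f=(T_+,T_-)$ and $g=(S_+,S_-)$, composition in $F$ is computed by expanding both pairs, adding canceling carets until $T_-$ and $S_+$ coincide, and then taking $(T_+',S_-')$. Diagrammatically, concatenating $\Phi(f)$ over $\Phi(g)$ places the merge tree $T_-$ directly above the split tree $S_+$. I will show by induction on the number of carets that a merge tree stacked on a split tree reduces to a \emph{split-then-merge} form. The inductive step uses Type II moves: a merge followed immediately by a split reduces to two parallel strands. Together with Type I moves and planar isotopy, this lets me push splits above merges. The resulting form is exactly $\Phi$ of the common-refinement pair, so the map respects products. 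I also need to fix the convention for the order of composition so that it matches concatenation.

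For bijectivity, I first show every reduced $(1,1)$-strand diagram has all splits above all merges. Suppose some merge lies above some split along a directed path. Then, after planar isotopy, some merge edge feeds directly into a split, and a Type II move applies, contradicting reducedness. I expect this to be the main obstacle: making "pushing splits up" rigorous for general embedded planar graphs. I would handle it by choosing a merge vertex that is maximal among merges having a split below it, and arguing that its outgoing edge must reach a split before reaching any other merge. A diagram with all splits above all merges consists of a binary split tree from the single source and a binary merge tree into the single sink. By planarity their leaves are matched in order, so the diagram is $\Phi(T_+,T_-)$ for some pair. This gives surjectivity. For injectivity, note that a reduced tree pair has no canceling carets, so $\Phi$ of it admits no Type I move. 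It admits no Type II move either, since no merge precedes a split. Hence $\Phi$ of a reduced pair is already reduced, and distinct reduced pairs give distinct diagrams. Since reduced pairs are unique representatives in $F$, the map is a bijection, and therefore an isomorphism $F\cong\mathscr{R}$.
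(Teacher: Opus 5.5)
Your proposal is correct and is essentially the argument the paper relies on: the paper cites Belk--Matucci rather than giving a proof, but it describes exactly your correspondence (a reduced tree pair $\mapsto$ a $(1,1)$-strand diagram, with products computed by stacking and reducing via Type I and Type II moves). The step you flag as the main obstacle is in fact immediate: on a directed path from a merge to a split, the first split is fed directly by a merge, so a Type II move applies. The claim that does deserve a line of proof is that planarity forces the leaves of the split tree and the merge tree to be matched in order, which you can get from the two $y$-monotone arcs joining the lowest common split and the lowest common merge of two leaves.
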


Given an element $g \in F$, its corresponding element of $\mathscr{R}$ is its reduced pair of trees, each with an additional root added at the top, with each pair of attached leaves smoothed together as one edge. The root of the top tree serves as the univalent source and the root of the bottom tree is the univalent sink. To calculate $g \circ f$ diagrammatically, one can stack the reduced pair of trees for $g$ below the reduced pair of trees for $f$ to obtain an unreduced $(1,1)$-strand diagram. Reducing with moves of Type I and II will give the reduced pair of trees for $g \circ f$.

Belk and Matucci extend strand diagrams to Thompson's group $T$ by defining \textit{cylindrical strand diagrams}, see \cite[Definition 2.7]{belkmatucci}. Every cylindrical strand diagram is equivalent to a unique \textit{reduced} cylindrical strand diagram, and the group of reduced $(1,1)$-cylindrical strand diagrams, with the operation of concatenation and reduction, is isomorphic to $T$.

To encode the group $V$, Belk and Matucci defined \textit{abstract strand diagrams.}

\begin{definition}[\cite{belkmatucci}, Def. 2.8]\label{def: abstract} An \textit{abstract $(m,n)$-strand diagram} is a finite, acyclic (not necessary planar) graph with $m$ univalent sources, $n$ univalent sinks, all other vertices are either splits or merges, each equipped with a cyclic ordering of the three edges that meet. The collection of these cyclic orderings is called a \textit{rotation system.} Abstract strand diagrams are considered equivalent up to reductions and directed graph isomorphisms, provided that both the reductions and the isomorphisms preserve the rotation system; other reductions and isomorphisms are not permitted. The set of reduced $(1,1)$-abstract strand diagrams, with group operation of concatenation and reduction, is isomorphic to $V$. 
\end{definition}

\begin{remark} Because abstract strand diagrams are equivalent up to rotation-system-preserving directed graph isomorphisms, we may change how the graph is embedded in the plane \textit{without changing the strand diagram} as long as we keep the same cyclic ordering of edges at each vertex. This differs from the cases of non-abstract or cylindrical strand diagrams, in which the embedding of the graph may only be modified by isotopy.
\end{remark}

Given a reduced triple $(T_+,T_-,\sigma)$, we can build an abstract strand diagram by reflecting $T_-$ over the horizontal axis and attaching the leaves of $T_+$ according to $\sigma$, see Figure \ref{fig: V strand diag}. To distinguish vertices from edges simply passing over each other in a non-planar way, we will denote the latter with a circle drawn around the crossing, as in Figure \ref{fig: V strand diag}.

\begin{remark}
    When creating an abstract strand diagram for $(T_+,T_-,\sigma)$ one has infinitely many choices of how to depict $\sigma$. By Definition \ref{def: abstract}, all choices lead to equivalent diagrams because they are all related by rotation-system preserving graph isomorphisms. On the level of virtual links, this equivalence will correspond to detour moves between diagrams (see Section \ref{sec_virtual_links} ahead).
\end{remark}
 Given abstract strand diagrams $D_g$, $D_f$ for $g, f\in V$, one can obtain a diagram for $g \circ f$ by stacking $D_g$ below that of $D_f$, and then reducing the result. Although any sequence of reductions will lead to an equivalent diagram, it will be helpful for our purposes to use a particularly convenient sequence of moves. This procedure always results in a diagram that clearly depicts the triple $(V_+,V_-,\tau)$ for $g \circ f$. Other sequences of moves will lead to strand diagrams that are equivalent up to a rotation-system-preserving directed graph isomorphisms, but these isomorphisms may visually obscure the associated pair of trees and permutation we are after.

Toward defining this procedure, we first establish some notation. We follow the convention of the literature in which trees and forests that ``grow up" (i.e., contain only merges) are called \textit{inverse trees} and \textit{inverse forests}. We will reserve the use of \textit{trees} and \textit{forests} for refer to diagrams that ``grow down" and contain only splits. Although we have established that elements of $V$ can be thought of as ordered triples consisting of two trees with the same number of leaves, it is helpful at times to think of the second tree as an inverse tree instead. After all, when constructing an abstract strand diagram from $(T_+,T_-,\sigma)$, we first reflect $T_-$ over the horizontal axis, at which point it becomes an inverse tree. We also use $\circ$ to denote when two (potentially unreduced) abstract strand diagrams are stacked but not reduced, and $*$ to denote the operation of stacking and subsequently reducing. To eliminate ambiguity, we will now denote function composition as $gf$, which means that $f$ is applied, followed by $g$. With this notation in mind, we establish some prerequisite lemmas.
\begin{lemma}\label{lem: trees to forests}
    Given a tree $T_+$ and an inverse tree $T_-$, the reduced strand diagram $T_{+} * T_-$ can always be expressed as an inverse forest $F_-$ stacked below a forest $F_+$.
\end{lemma}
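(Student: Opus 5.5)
We read $T_+ * T_-$ as follows. $T_+$ is an (inverse) tree of merges with $n$ leaves on top and one root at the bottom. $T_-$ is a tree of splits with one root on top and $m$ leaves at the bottom. Their stack $T_+ \circ T_-$ is therefore a $(n,m)$ abstract strand diagram: all merges lie above all splits, joined through a single edge. The claim is that after full reduction the result can be rewritten as splits-on-top, merges-on-bottom. The convention matters: ``an inverse forest $F_-$ stacked below a forest $F_+$'' means the forest of splits sits on top and the inverse forest of merges on the bottom, so the whole diagram is a split-forest followed by a merge-forest.

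The argument is by induction on the total number of carets $|T_+|+|T_-|$. Base case: if either tree is trivial, the stack is already of the required form, since it is a forest followed by a trivial inverse forest, or vice versa.

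Inductive step. At the junction edge the bottom vertex of $T_+$ is a merge and the top vertex of $T_-$ is a split, joined by one edge. This is exactly a merge-then-split, to which the Type II move of Figure \ref{fig: strand diag moves} applies. The rotation-system convention in the definition of abstract strand diagrams is what makes this an honest reduction rather than a non-planar rewiring. The move replaces the merge-split pair by two parallel strands. Then:
\begin{itemize}
\item The left subtree $A_1$ of $T_+$ connects to the left subtree $B_1$ of $T_-$.
\item The right subtree $A_2$ connects to $B_2$.
\end{itemize}
Each piece $A_i\circ B_i$ is again a tree-over-inverse-tree stack with fewer carets. By induction each reduces to a forest above an inverse forest, and placing them side by side gives a diagram of the same form.

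One caveat: if we read Type II instead as split-then-merge cancellation, the junction move is not itself a reduction. In that case I would phrase the step as an equivalence of strand diagrams (a ``commuting'' relation valid in $V$), and then separately reduce the resulting split-above-merge diagram. In such a diagram the only reductions are Type I cancellations of a split immediately followed by the merge of the same two strands, with cyclic orders matching. Each cancellation removes one caret from each forest and preserves the form.

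The main obstacle is pinning down the exact local moves and checking that the final reduced diagram keeps the forest-over-inverse-forest shape. I would verify this by observing that every available reduction on such a diagram is local between a leaf-level split and a leaf-level merge. Since the reduced diagram is unique, we may do all reductions in this order.
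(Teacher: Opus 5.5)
Your argument is correct but organized differently from the paper's. The paper reasons about the reduced diagram directly. Since Type II removes every merge that feeds into a split, the reduced diagram has no split directly below a merge. Hence, implicitly, along every directed path all splits precede all merges, so the splits can be collected into a forest on top and the merges into an inverse forest below. That reasoning is really about reduced diagrams in general, and it leaves unstated the passage from the local condition to the global rearrangement.

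You instead induct on the number of carets of the input. You apply Type II at the unique merge--split junction, so the problem splits into two smaller instances of itself. This is more explicit and constructive, and it yields more: the reduced diagram is a disjoint union of single trees, single inverse trees and straight strands. No component contains both a split and a merge, so no Type I cancellation can ever occur.

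In particular, your caveat paragraph and final paragraph are unnecessary. Type II is precisely the merge-followed-by-split move, which is how the paper uses it. The side-by-side union of two disjoint reduced pieces is already reduced, so uniqueness of reduced forms finishes the proof.

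One cosmetic point: in the statement $T_+$ is the tree of splits and $T_-$ the inverse tree of merges, with $T_-$ stacked on top in $T_+ * T_-$. You swapped both the names and the stacking order, and the two swaps cancel, so the configuration you analyze (merges above splits) is the right one.
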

\begin{proof}
    The Type II reduction move guarantees no splits occur directly below merges. Therefore all merges may be moved to the bottom half of the diagram, and all splits may be moved to the top half of the diagram, as in Figure \ref{fig: forests from trees}.
\end{proof}

\begin{figure}\includegraphics[scale=0.4]{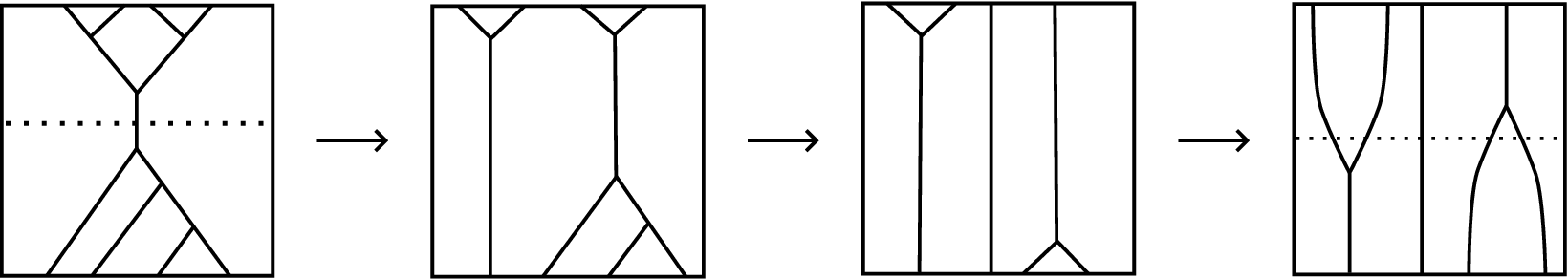}\put(-335,40){$T_-$}\put(-335,10){$T_+$}\put(6,40){$F_{+}$}\put(6,10){$F_-$}\caption{Using Type II moves and planar isotopy to reduce $T_+ \circ T_-$ (left) to $F_- \circ F_+$ (right).}\label{fig: forests from trees}\end{figure}
\begin{definition}
    Let $\sigma \in S_n$, the symmetric group on $n$ letters. Let $D(\sigma)$ be an abstract strand diagram (not necessarily planar) illustrating the bijection $\sigma$. The top left square in Figure \ref{fig: slide up} is an example of such a diagram for the permutation $(132) \in S_3$.
\end{definition} 
\begin{lemma}\label{lem: slide up} Consider an abstract strand diagram of the form $F_+ \circ D(\sigma)$, where $F_+$ is a forest and $D(\sigma)$ is as above. Then there exists an equivalent diagram of the form $D(\sigma')\circ F_+^{'},$ where $\sigma' \in S_{n^{'}},$ a potentially different permutation group from $S_{n}$, and $F_+'$ is another forest.\end{lemma}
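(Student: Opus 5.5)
The idea is to slide every split of $F_+$ upward through the permutation diagram, one at a time, using only rotation-system-preserving directed graph isomorphisms. In $F_+ \circ D(\sigma)$ the diagram $D(\sigma)$ sits above $F_+$: its $n$ strands feed the $n$ roots of the forest $F_+$. Each tree of $F_+$ is attached to a single bottom endpoint of $D(\sigma)$, say endpoint $j$, which is joined by a single strand to the top endpoint $\sigma^{-1}(j)$. Since that strand has no vertices, we can redraw the abstract graph so that the whole tree hanging below endpoint $j$ is carried up along the strand. It then sits directly below top endpoint $\sigma^{-1}(j)$, and its leaves become new strands heading down to where they originally exited $F_+$. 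This redrawing is a directed graph isomorphism. It preserves the cyclic order at every split, because the local picture of each split (incoming edge above, left child, right child) is kept, and edge crossings in the new drawing are merely non-planar, circled crossings.

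I would carry this out formally by induction on the number of splits in $F_+$. If $F_+$ has no splits it is a union of trivial strands, and we take $F_+' = F_+$ and $\sigma' = \sigma$. Otherwise, pick a split $v$ of $F_+$ whose incoming edge comes directly from a bottom endpoint $j$ of $D(\sigma)$. Move $v$ up through $D(\sigma)$ to the top endpoint $\sigma^{-1}(j)$. The resulting picture is $F_+'' \circ D(\sigma_1) \circ S$, where $S$ is a single split on strand $\sigma^{-1}(j)$, the forest $F_+''$ has one fewer split, and $\sigma_1 \in S_{n+1}$ is obtained from $\sigma$ by replacing $\sigma^{-1}(j) \mapsto j$ with the two adjacent values sent to the two adjacent positions $j, j+1$ in order. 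Concretely, $\sigma_1$ is the permutation induced on the refined intervals. Applying induction to $F_+'' \circ D(\sigma_1)$ gives $D(\sigma')\circ F'$, and stacking the single split $S$ into the forest yields the forest $F_+' = F' \circ S$, which is still a forest because it contains only splits and every vertex has in-degree one.

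The main point needing care is checking that the rotation system at the moved split is preserved: the left and right children must stay in their original left/right order relative to the incoming edge. Otherwise the isomorphism would swap them and we would get a genuinely different element of $V$. I would check this by fixing the convention that the cyclic order at a split is (incoming, right child, left child) read clockwise, and noting that the redrawing translates the split rigidly rather than reflecting it. Any twisting of the two child strands around each other is absorbed into $\sigma'$ as crossings, not into the vertex. I would also remark that no reductions are used, so the two diagrams are equivalent as abstract strand diagrams, and $n' = n + (\text{number of splits of } F_+)$.
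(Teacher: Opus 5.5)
Your proposal is correct and is essentially the paper's argument: the paper's proof simply exhibits, in a figure, the redrawing that carries the forest up along the vertex-free strands of $D(\sigma)$, and observes that this is a directed graph isomorphism preserving the rotation system. Your induction on the number of splits is a more careful formalization of that same picture, tracking the refined permutation $\sigma_1\in S_{n+1}$ at each step and giving the count $n'=n+(\text{number of splits of }F_+)$.
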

\begin{proof} Figure \ref{fig: slide up} illustrates the equivalence of abstract strand diagrams. This modification does not change the isomorphism class or the rotation system of the graph. 
\begin{figure}
\includegraphics[scale=0.4]{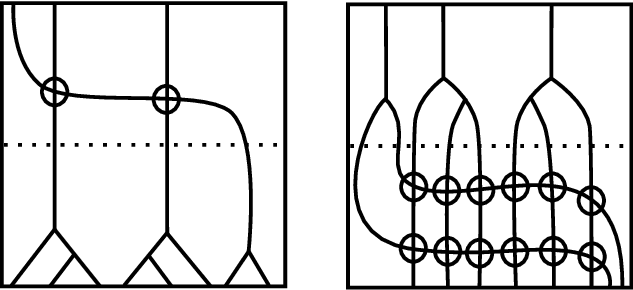}\put(-150,40){$D(\sigma)$}\put(-150,10){$F_+$}\put(6,40){$F_+^{'}$}\put(6,10){$D(\sigma')$}
\caption{The equivalence of abstract strand diagrams in Lemma \ref{lem: slide up}.}\label{fig: slide up}
\end{figure}
\end{proof} In other words, Lemma \ref{lem: slide up} tells us that forests can ``slide up" through permutations. By a similar argument, inverse forests can ``slide down" through permutations.
\begin{lemma}\label{lem: slide down} Consider an abstract strand diagram of the form $D(\sigma)\circ F_-$, where $F_-$ is an inverse forest. Then there exists equivalent strand diagram of the form $F_-^{'}\circ D(\sigma'),$ where $\sigma'$ is a (possibly different) element of some $S_{n'}$ and $F_-^{'}$ is a (possibly different) inverse forest. \end{lemma}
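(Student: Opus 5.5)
The plan is to make the statement precise and then run the argument of Lemma \ref{lem: slide up} upside down. Write $D(\sigma)$ as the permutation diagram on $n$ strands. The inverse forest $F_-$ has $n$ top leaves, grouped into $m$ consecutive blocks $B_1,\dots,B_m$, one for each component inverse tree $I_1,\dots,I_m$. Here a block of size one is a trivial tree, i.e.\ a bare edge. Composing, $D(\sigma)\circ F_-$ takes the $n$ outputs of $D(\sigma)$ and feeds them into the leaves of $F_-$. For each $j$, let $A_j=\sigma^{-1}(B_j)$ be the set of input strands of $D(\sigma)$ routed into $I_j$.

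We construct the target diagram directly. Set $n'=m$, and let $F_-'$ be the inverse forest with components $I_1',\dots,I_m'$. Each $I_j'$ is a copy of $I_j$, and its leaves are reattached in the same rotation-system order as before. These components are placed in the order in which the top strands naturally present them, for instance ordered by the smallest element of $A_j$. We then let $\sigma'\in S_m$ be the permutation sending the root of $I_j'$ to the $j$-th bottom position. The diagram $F_-'\circ D(\sigma')$ is obtained from the original by sliding each inverse tree $I_j$ upward along the strands of $D(\sigma)$ that feed it. When $A_j$ is not an interval, the leaves of $I_j'$ are joined to the top strands by non-planar (virtual) crossings, which we circle as in Figure \ref{fig: V strand diag}. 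The factor $D(\sigma')$ then only permutes the $m$ roots.

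The key step is to check that this is a rotation-system-preserving directed graph isomorphism. Both diagrams have the same vertex set: the merges of the $I_j$, together with the sources and sinks. The edge incidences also agree. A top source $i$ is joined, through a chain of edges with no vertices in $D(\sigma)$, to the leaf $\sigma(i)$ of some $I_j$, and this is exactly how it is joined in the new diagram. Likewise, the root of $I_j$ goes to sink $j$ in both. Within each merge the cyclic order of its three edges is unchanged, since each $I_j$ is copied verbatim. Hence the two diagrams are identical as abstract strand diagrams by Definition \ref{def: abstract}, and no reductions are used at all.

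The main obstacle is bookkeeping rather than mathematics. The merges of $I_j'$ combine leaves that arrive from non-adjacent top strands, so the planar picture must be drawn with crossing strands. We have to make sure that the left/right order of the two incoming edges at each merge matches the original rotation system. This is precisely the freedom granted by Definition \ref{def: abstract} and the remark following it: the planar embedding is irrelevant as long as the cyclic orderings at vertices are preserved. So we simply fix each merge's orientation to agree with $I_j$, as illustrated by the vertical reflection of Figure \ref{fig: slide up}. The result is the dual of Lemma \ref{lem: slide up}, obtained by reversing all edge directions, which swaps splits with merges and forests with inverse forests.
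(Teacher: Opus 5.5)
There is a genuine gap, and it starts with the stacking convention. In this paper $A\circ B$ means that $B$ is drawn \emph{above} $A$. For instance, Lemma \ref{lem: trees to forests} rewrites $T_+\circ T_-$ (inverse tree on top) as $F_-\circ F_+$ (forest on top), and Figure \ref{fig: slide up} draws $F_+\circ D(\sigma)$ with $D(\sigma)$ above $F_+$. So $D(\sigma)\circ F_-$ is an inverse forest sitting \emph{on top of} a permutation of its $m$ roots. This is exactly the configuration of $F_-$ over $D(\nu)$ in step (2) of the composition procedure, and the lemma pushes the merges \emph{down}.

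You read it the other way, with the permutation on top and the inverse forest below, and you push the merges up. That version is false. Take $n=2$, $\sigma$ the transposition, and $F_-$ a single merge. Swap-then-merge cannot be rewritten as a merge followed by a permutation in $S_1$. If it could, stacking a split on top of both would make split--swap--merge, the nontrivial order-two element of $V$, equivalent to split--merge, which is the identity.

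Your own construction shows where the argument breaks. When $A_j$ is not an interval, or $\sigma$ is not increasing on $A_j$, you attach the leaves of $I_j'$ to the top strands through virtual crossings. Those crossings are themselves a permutation diagram. So what you have drawn is $D(\sigma')\circ F_-'\circ D(\tau)$, not $F_-'\circ D(\sigma')$. Your isomorphism check succeeds only because the picture is the original diagram redrawn. If such attachments were allowed inside an inverse forest, the statement would be vacuous.

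The repair is the configuration that your closing duality remark actually produces, namely the vertical mirror of Figure \ref{fig: slide up}. Put $F_-$ on top. Its trees $I_1,\dots,I_m$ have consecutive leaf blocks $B_1,\dots,B_m$, and $\sigma\in S_m$ sends the root of $I_j$ to position $\sigma(j)$.
\begin{itemize}
\item Let $F_-'$ consist of $I_{\sigma^{-1}(1)},\dots,I_{\sigma^{-1}(m)}$ from left to right.
\item Let $\sigma'\in S_n$ be the block permutation carrying $B_j$ rigidly, in order, onto the leaf block of $I_j$ in $F_-'$.
\end{itemize}
Because the blocks move rigidly, $F_-'$ is a genuine planar inverse forest hanging below $D(\sigma')$. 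Your vertex, incidence, and rotation-system comparison then goes through verbatim, with $n'=n$ rather than $m$.
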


We may now describe our procedure by which the strand diagrams for $f=(T_+,T_-,\sigma)$ and $ g=(S_+,S_-,\nu)$ can be composed and reduced, resulting in a diagram that depicts $gf=(V_+,V_-,\tau)$. The process below is illustrated in Figure \ref{fig: comp in V}.
\begin{enumerate}
    \item After stacking $S_- \circ D(\nu) \circ S_+ \circ T_- \circ D(\sigma) \circ T_+$, apply Lemma \ref{lem: trees to forests} to $S_+ \circ T_-$ yields the equivalent diagram $S_-\circ D(\nu)\circ (F_- \circ F_+)\circ D(\sigma) \circ T_+$.
    \item Applying Lemma \ref{lem: slide up} to $F_+ \circ D(\sigma)$ and Lemma \ref{lem: slide down} to $D(\nu) \circ F_-$ yields an equivalent diagram of the form $S_- \circ F_-^{'} \circ D(\nu ') \circ D(\sigma') \circ F_+^{'} \circ T_{+}.$ 
    \item The diagrams $(F_+^{'} \circ T_+)$ and $(S_- \circ F_-^{'}$) give a tree and inverse tree with the same number of leaves. The diagram $D(\nu') \circ D(\sigma')$ is equivalent to $D(\nu' \circ \sigma')$ and identifies the leaves of $(F_+^{'} \circ T_+)$ with those of $(S_- \circ F_-^{'})$. Therefore, we have reduced our diagram to a tree $U_+$ and an inverse tree $U_-$ with their leaves identified by the permutation $\nu ' \circ \sigma'.$ All merges appear below $D(\nu' \circ \sigma')$ and all splits appear above it. Therefore no Type II moves can be applied, only Type I moves, which amount to canceling carets. After all carets are canceled, the resulting diagram must be the reduced triple $(V_+,V_-, \tau)$ corresponding to $g \circ f.$
\end{enumerate}

\begin{figure}\includegraphics[scale=0.3]{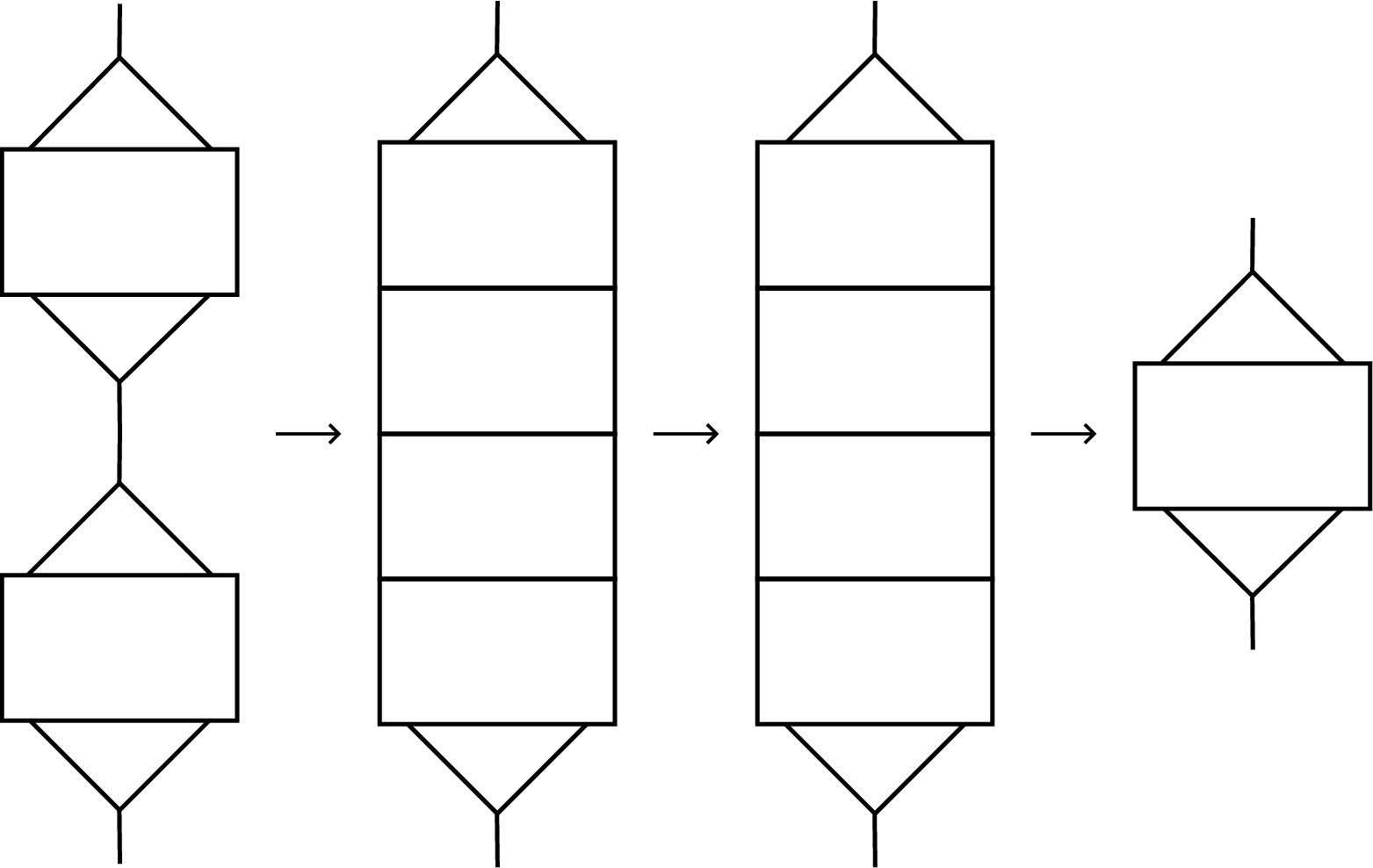}
%rightmost diagram
\put(-38,70){\tiny$D(\nu' \circ \sigma')$}
\put(-24,87){\tiny$U_+$}
\put(-25, 50){\tiny$U_-$}
%third diagram
\put(-87,15){\tiny$S_-$}
\put(-87,33){\tiny$F_-^{'}$}
\put(-90,55){\tiny$D(\nu')$}
\put(-90, 80){\tiny$D(\sigma')$}
\put(-87, 105){\tiny$F_+^{'}$}
\put(-87,122){\tiny$T_+$}
%second diagram
\put(-149,15){\tiny$S_-$}
\put(-152,33){\tiny$D(\nu)$}
\put(-149,55){\tiny$F_-$}
\put(-149,80){\tiny$F_+$}
\put(-152,105){\tiny$D(\sigma)$}
\put(-149,122){\tiny$T_+$}
%left diagram
\put(-210,122){\tiny$T_+$}
\put(-213,105){\tiny$D(\sigma)$}
\put(-211,86){\tiny$T_-$}
\put(-211,51){\tiny$S_+$}
\put(-213,33){\tiny$D(\nu)$}
\put(-211,15){\tiny$S_-$}

\caption{Diagrammatic composition of two elements of $V$ as abstract strand diagrams. }\label{fig: comp in V}\end{figure}

\subsection{ Links from $F$ and $T$}\label{sec: FT links}

We now discuss existing constructions of links from Thompson's groups $F$ and $T$. Jones initialized this program by providing a surjective map from $F$ to the set of isotopy classes of links in $S^3$ \cite[Theorem 5.3.1]{jones_unitary}. Given an element of $g \in F$, one uses its reduced pair of trees to create a link diagram $\mathscr{L}(g)$ by first making the trees ternary, then connecting the top vertex to the bottom vertex, and finally turning all $4$-valent vertices into crossings, as in Figure \ref{fig: links from F}. 
\begin{figure}\includegraphics[scale=0.4]{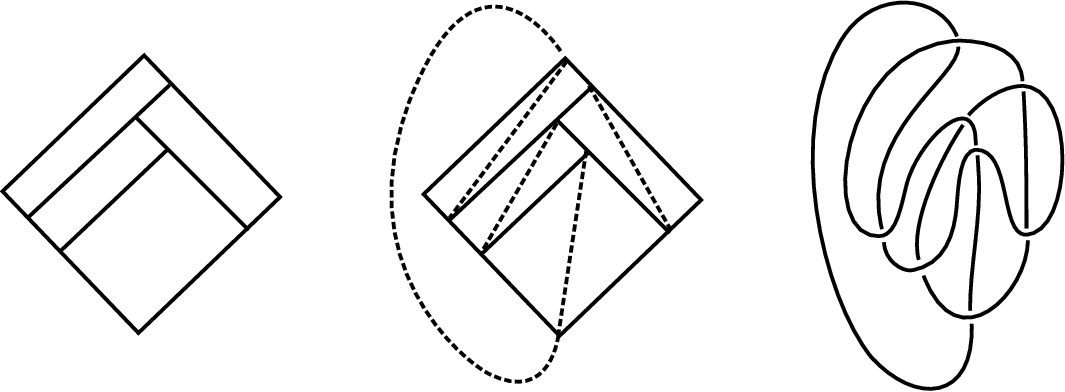}
\caption{Jones' method of associating links in $S^3$ to elements of $F$.}\label{fig: links from F}\end{figure}Note that a \textit{reduced} pair of trees is necessary; if one uses an unreduced pair, each canceling caret introduces an extra unlinked and unknotted component.

Jones also introduced the oriented subgroup $\vec{F}\leq F$ from which all link diagrams, when given the checkerboard shading, bound orientable surfaces. If one fixes the convention that the leftmost face of the resulting checkerboard surface is positively oriented, then all elements of $\vec{F}$ produce links with a natural choice of orientation. Algebraic properties of this new group were studied in \cite{golansapir}. Jones also showed how to associate links in $S^3$ to elements of $T$, however, $T$ is ``too big" in the sense that its subgroup $F$ already gives rise to all link types. However, Jones' map from $T$ gave rise to a new oriented subgroup $\vec{T}$, whose algebraic properties were further studied in \cite{nikkelren}.

Work of the second author used cylindrical strand diagrams to produce links in the thickened annulus from elements of $T$ (see Figure \ref{fig: links from T}), recovering the usual links of Jones from $F$ in a contractible subset of the thickened annulus. This map realizes all checkerboard colorable annular links \cite[Theorem 1.1]{LL25}. As was the case for $F$, working with unreduced triples introduces an extra unlinked unknot for each canceling caret. Annular links arising from the subgroup $\vec{T}$ inherit a natural orientation induced by the orientation of the checkerboard surface.

\begin{figure}\includegraphics[scale=0.4]{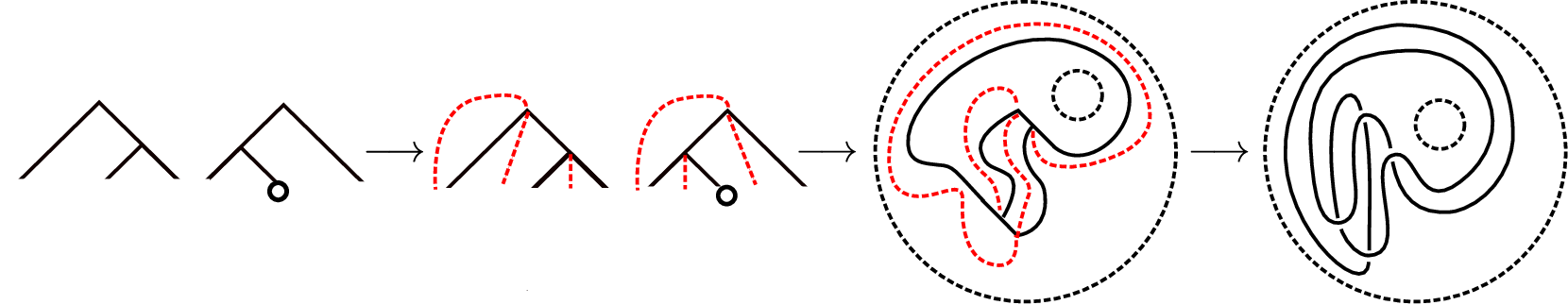}\caption{A link in the thickened annulus associated to $h \in T$.}\label{fig: links from T}\end{figure}

We note that all links in $S^3$ are checkerboard colorable; therefore both $F$ and $T$ gives rise to all CC links in $S^3$ and the thickened annulus, respectively. Moreover, both realization results rely on specific choices of checkerboard surfaces. Theorems \ref{thm_intro_real} and \ref{thm_intro_oriented} continue this pattern by constructing maps from $V$ (resp. $\vec{V}$) giving rise to all CC virtual (resp. almost classical) links. Toward this result, we now introduce virtual links, CC virtual links, and AC links.

\subsection{Virtual links} \label{sec_virtual_links}  A \emph{virtual link diagram} of $n$-components is an $n$-component link diagram in $\mathbb{R}^2$ where the crossings are marked as either classical over/under crossings or as virtual crossings (see Figure \ref{fig_crossings}). If each of the components carries a direction of travel, the diagram is said to be \emph{oriented}. Two virtual link diagrams are said to be \emph{equivalent} if they may be obtained from one another by a finite sequence of classical Reidemeister moves and detour moves (see Figure \ref{fig_moves}). The detour move allows one to arbitrarily place a portion of an arc which contains only virtual crossings. An equivalence class of (oriented) virtual link diagrams will be called a \emph{(oriented) virtual link type}. 

\begin{figure}[htb]
\begin{tabular}{ccc}
\begin{tabular}{c}
\includegraphics[width=.5in]{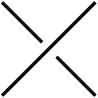} \\ \\ \underline{classical:} \end{tabular} \quad \quad
&\quad \begin{tabular}{c} \includegraphics[width=.5in]{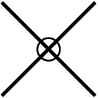} \\ \\ \underline{virtual:} \end{tabular} \quad &\quad \quad\begin{tabular}{c} \includegraphics[width=1in]{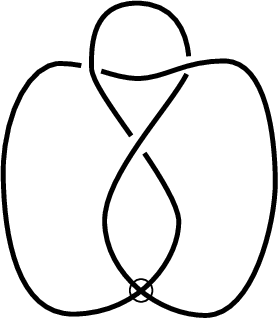} \end{tabular}
\end{tabular}
\caption{A virtual knot diagram (right) and its crossing types (left, center).}\label{fig_crossings}
\end{figure}

\begin{figure}[htb]
\begin{tabular}{ccc}
\begin{tabular}{c}
\includegraphics[width=1.1in]{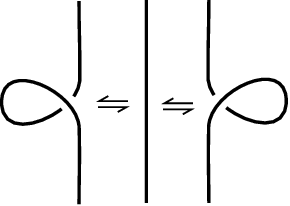} \\ \underline{$\Omega 1$:} \end{tabular} \quad
&\quad \begin{tabular}{c} \includegraphics[width=.7in]{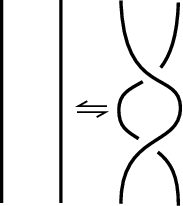} \\ \underline{$\Omega 2$:} \end{tabular} \quad &\quad \begin{tabular}{c} \includegraphics[width=2in]{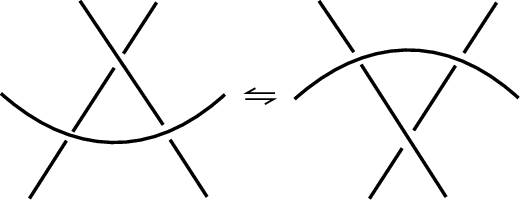} \\ \underline{$\Omega 3$:} \end{tabular}\\ \\ \multicolumn{3}{c}{\begin{tabular}{c}\includegraphics[width=2.2in]{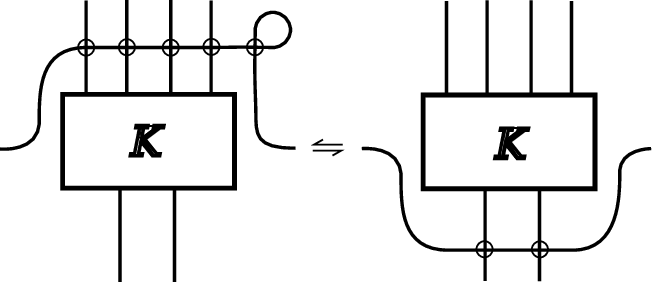}\\ \underline{A detour move:} \end{tabular}}
\end{tabular}
\caption{Reidemeister moves ($\Omega 1$, $\Omega 2$, $\Omega 3$) and detour moves.} \label{fig_moves}
\end{figure}

Carter, Kamada, and Saito \cite{CKS} showed that every virtual link diagram can be represented as a usual link diagram on closed oriented surface. This can be done by drawing a small disc (i.e. a $0$-handle) around each classical crossing, thickening each arc between the discs to an untwisted $1$-handle, and capping off each of boundary components with a disc (i.e. a 2-handle). Note that at a virtual crossing, the two bands pass over one another, so that the virtual crossing is removed. See Figure \ref{fig_hopf_on_torus}. The smallest genus among all closed orientable surfaces on which a representative of a virtual link type can be drawn is called its \emph{virtual 2-genus}.  A minimal genus representative of a virtual link type is unique up to diffeomorphism of the supporting surface (Kuperberg \cite{kuperberg}).   

\begin{figure}[htb]
\[
\xymatrix{
\begin{array}{c}
\includegraphics[width=1.5in]{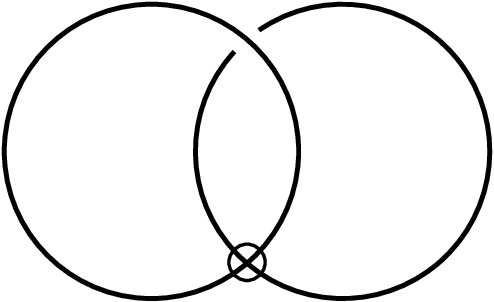}
\end{array} \ar[r] & \begin{array}{c}
\includegraphics[width=1.7in]{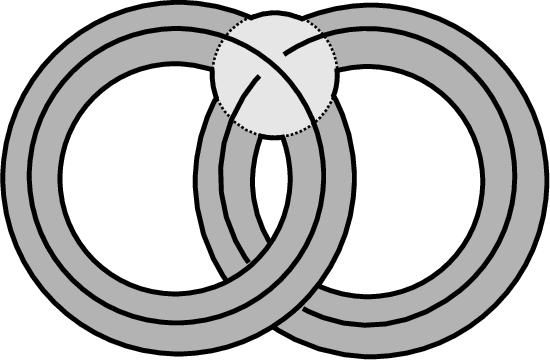}
\end{array} \ar[r] & \begin{array}{c}
\includegraphics[width=1.9in]{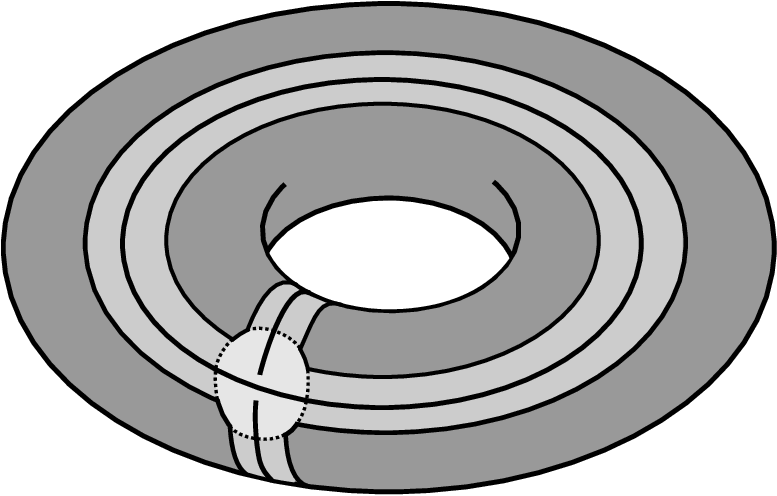}
\end{array}
}
\]
\caption{Realizing virtual link diagrams on a surface.} \label{fig_hopf_on_torus}
\end{figure}

A virtual link type $L$ is said to be \emph{checkerboard colorable (CC)} if $L$ can be represented as a diagram $D$ on a surface $\Sigma$ such that $[D]=0 \in H_1(\Sigma;\mathbb{Z}_2)$ (see Carter et al. \cite{CESSW}). Equivalently, a CC link on $\Sigma$ satisfies the property that the components of $\Sigma \smallsetminus D$ can be shaded in two colors, called \emph{black} and \emph{white}, so that every edge of $D$ bounds a component in each color (see N. Kamada \cite{n_kamada_cc}). An example is shown in Figure \ref{fig_check_colored}. If a virtual link type is checkerboard colorable, then it can be represented by a $\mathbb{Z}_2$-homologically trivial diagram on a minimal genus surface (see \cite{boden_chrisman_karimi}, Theorem 1.8).  If a checkerboard colorable virtual link type $L$ is furthermore represented by diagram $D$ on $\Sigma$ such that $[D]=0 \in H(\Sigma;\mathbb{Z})$, then $L$ is said to be \emph{almost classical (AC)}. In this case, a minimal genus representative is likewise null homologous. 

\begin{figure}[htb]
\begin{tabular}{ccc}
\includegraphics[width=1.5in]{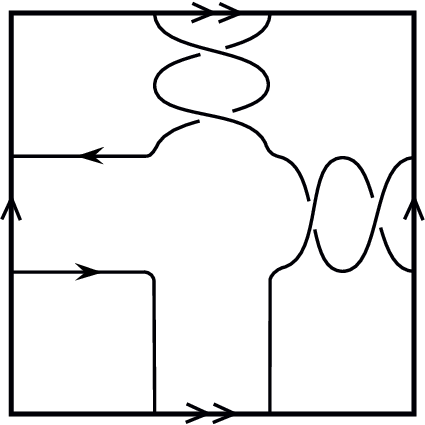}
& \includegraphics[width=1.53in]{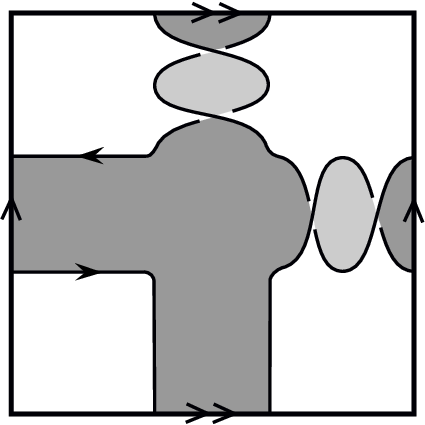} & \includegraphics[width=1.5in]{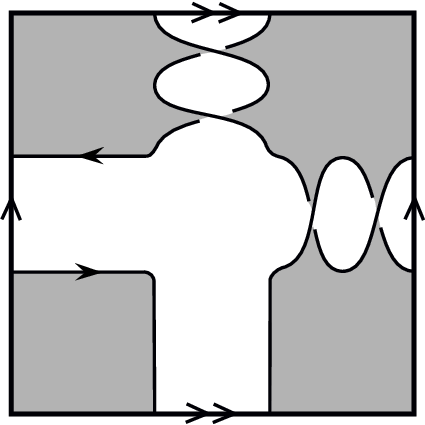}
\end{tabular}
\caption{Two checkerboard surfaces for a knot diagram on $S^1 \times S^1$.}
\label{fig_check_colored}
\end{figure}

For a $\mathbb{Z}_2$-null homologous link $\mathcal{L}$ in a thickened surface $\Sigma \times [0,1]$, a \emph{spanning surface} of $\mathcal{L}$ is compact surface $M \subset \Sigma \times [0,1]$, having no closed components, such that $\partial M=\mathcal{L}$. By adding compressible tubes in $\Sigma \times [0,1]$ between the components of a spanning surface if necessary, it can always be assumed that a spanning surface in $\Sigma \times [0,1]$ is connected if $\Sigma$ is connected. Of course, if $\Sigma$ itself is not connected, then its components can be joined by connected sums to obtain a connected surface with the same genus. If a CC link diagram $D$ on $\Sigma$ is a cellular embedding, so that $\Sigma \smallsetminus D$ is a union of discs, then both the black and white checkerboard surfaces for $D$ are spanning surfaces for the link $\mathcal{L} \subset \Sigma \times [0,1]$ corresponding to $D$. A minimal genus representative of a virtual link necessarily has a cellularly embedded diagram. Furthermore, if a virtual link is almost classical, then there is a minimal genus representative that has a checkerboard surface which is orientable (see \cite{boden_chrisman_karimi}, Proposition 5.8). In Figure \ref{fig_check_colored}, the given knot is $\mathbb{Z}$-null homologous and hence represents an almost classical knot. An oriented checkerboard surface is shown in black in the center of Figure \ref{fig_check_colored}. The black checkerboard surface on the right-hand side is non-orientable. Note that, in contrast to the classical case, it is not true that any two spanning surfaces of a link in $\Sigma \times [0,1]$ are $S^*$-equivalent. In fact, if $\Sigma$ has genus at least 1, there are exactly two $S^*$-equivalence classes of spanning surfaces for a given non-split link (see \cite{boden_chrisman_karimi}, Proposition 1.6). 

\subsection{Virtual spanning surfaces and Tait graphs} \label{sec_virt_tait_graphs} We now recall a method for drawing planar representations of spanning surfaces of knots in thickened surfaces, called \emph{virtual spanning surfaces}. These were first introduced in the oriented case in \cite{chrisman_vss,boden_chrisman_gaudreau} and in the unoriented case in \cite{boden_chrisman_karimi}. First, a \emph{virtual disc-band surface} is a finite collection of disjoint discs in the plane $D_1 \sqcup D_2 \sqcup \cdots \sqcup D_n$, which are connected by a finite collection of bands $B_1,\ldots,B_k$, with each band meeting a disc only along its attaching region. The bands may have classical half-twists. Two bands may intersect one another in a classical band crossing or a virtual band crossing (see Figure \ref{fig_band_crossings}). In particular, note that a virtual twist in a band is not allowed. Virtual crossings only appear in band crossings. An example of a virtual disc-band surface is given in Figure \ref{fig_virtual_disc_band_example}. 

\begin{figure}[htb]
\begin{tabular}{ccccccc}
\begin{tabular}{c}
\includegraphics[width=1in]{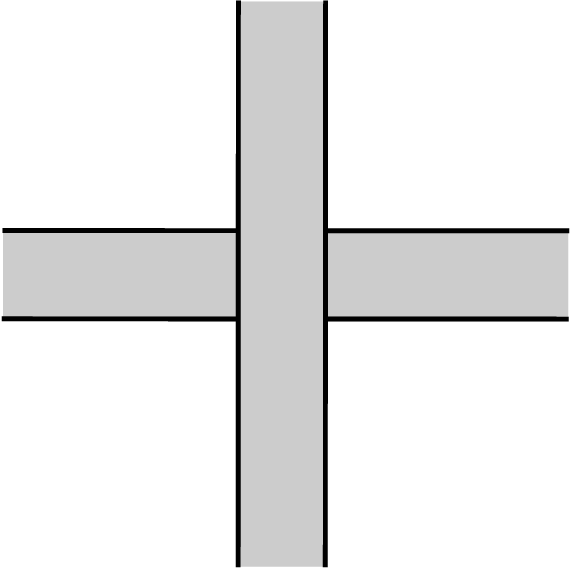} \end{tabular}
& & \begin{tabular}{c}\includegraphics[width=1in]{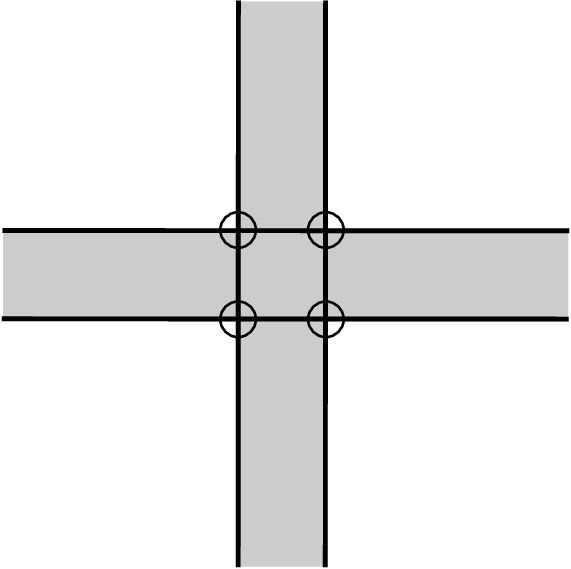}\end{tabular} & & \begin{tabular}{c} \\ \includegraphics[width=.6in]{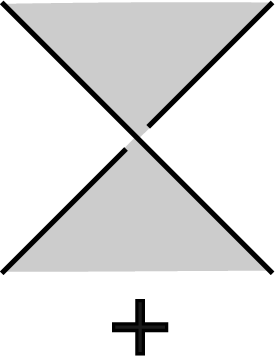} \end{tabular} & & \begin{tabular}{c} \\ \includegraphics[width=.6in]{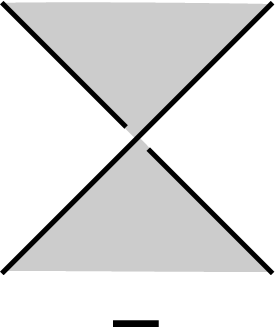} \end{tabular}\\
\underline{Classical band crossing:} & & \underline{Virtual band crossing:} & & \underline{Positive half-twist:} & & \underline{Negative half-twist}
\end{tabular}
\caption{Band crossings and half-twists in a virtual disc-band surface.} \label{fig_band_crossings}
\end{figure}

\begin{figure}
    \centering
    \includegraphics[width=2in]{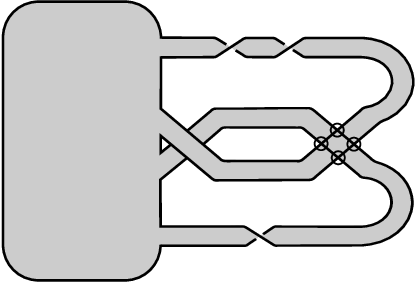}
    \caption{A virtual disc-band surface with a disc, two bands, a virtual band crossing, a classical band crossing, two positive half-twists and a negative half-twist.}
    \label{fig_virtual_disc_band_example}
\end{figure}

An arbitrary spanning surface $M \subset \Sigma \times [0,1]$ of a link $\mathcal{L} \subset \Sigma \times [0,1]$ can be drawn as a virtual disc-band surface as follows. There is an isotopy taking $M$ to a disc-band surface in $\Sigma \times [0,1]$ (see \cite{chrisman_vss}, Theorem 4), i.e. a decomposition of $M$ into $0$-handles and $1$ handles embedded in $\Sigma \times [0,1]$.  We will assume that $M$ is already in this form. It may also be assumed that $M$ has a projection to $\Sigma$ so that the discs project to disjoint discs on $\Sigma$ and the image of the bands contain only classical half-twists and band crossings. Now, remove a small disc $D$ centered at a point $\infty \in \Sigma$ that is disjoint from the projection of $M$ to $\Sigma$. Assuming $\Sigma$ is a closed connected surface of genus $g$, then $\Sigma_{\infty}=\Sigma \smallsetminus \text{int}(D)$ itself has a disc-band decomposition consisting of a single $0$-handle $H^0$ and $1$-handles $H^1_1,\ldots,H_{2g}^1$. Place $H^0$ in the plane and attach the $1$-handles as shown in Figure \ref{fig_screen_definition}, right. The overlap of the bands $H_{2i-1}^1$ and $H_{2i}^1$ is called a \emph{virtual region}, and is denoted with four virtual crossings as shown in Figure \ref{fig_screen_definition}. The entire figure $S$ in $\mathbb{R}^2$ is called the \emph{screen}.  The disc-band surface $M$ can now be transferred to $S$. After an isotopy, it may be assumed that both the discs of $M$ and all of its band crossings occur inside $H^0$. Hence, the $1$-handles $H^1_j$ contain only bands of $M$. If two bands of $M$ overlap in a virtual region, the intersections are drawn as a virtual crossing of bands. The image of $M$ on the screen is then a virtual disc-band surface. The image of its boundary $\partial M$ in $\mathbb{R}^2$ is a diagram of the virtual link type of $\mathcal{L}$. For an example, see Figure \ref{fig_disc_band_example}.

\begin{figure}[htb]
\[
\xymatrix{
\begin{array}{c}
\includegraphics[width=2.5in]{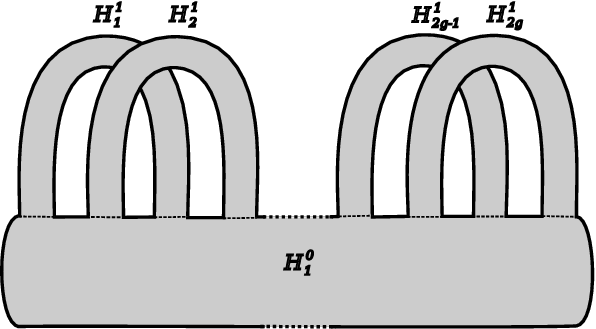}
\end{array} \ar[r] & \begin{array}{c}
\includegraphics[width=2.5in]{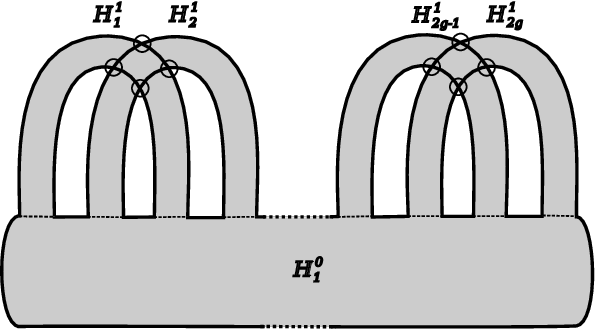}
\end{array}
}
\]
\caption{A genus $g$ surface $\Sigma_{\infty}$ with one boundary component (left) and a screen $S$ in $\mathbb{R}^2$ to which link diagrams on $\Sigma_{\infty}$ can be projected to virtual links (right).} \label{fig_screen_definition}
\end{figure}

\begin{figure}[htb]
\[
\xymatrix{
\begin{array}{c}
\includegraphics[width=1.75in]{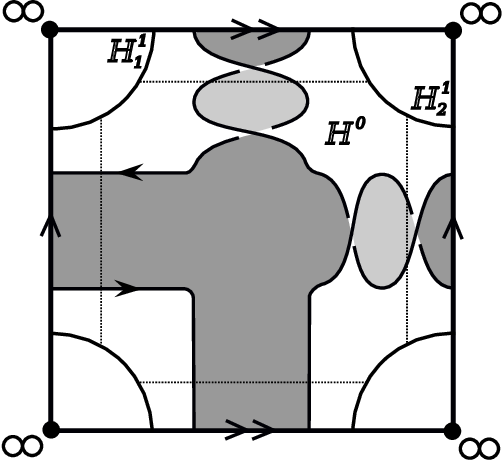}
\end{array} \ar[r] & \begin{array}{c}
\includegraphics[width=2.5in]{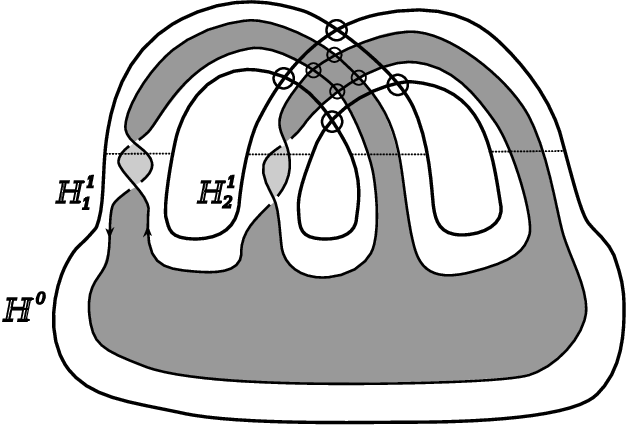}
\end{array}
}
\]
\caption{A disc-band surface and a corresponding virtual disc-band surface.} \label{fig_disc_band_example}
\end{figure}

Finally, we recall from \cite{boden_chrisman_karimi} how Tait graphs are constructed for checkerboard colorable virtual links. Suppose that $D$ is a cellularly embedded, checkerboard colorable link diagram on $\Sigma$. Fix a checkerboard coloring $\xi$ and let $M_{\xi}$ denote one of the two checkerboard surfaces. For concreteness, we will say it is the black one. Contract $M_{\xi}$ to its $1$-skeleton, so that each of the black cells of $M_{\xi}$ is contracted to a point and each of the twisted bands at the crossings is contracted to its core. Then the \emph{Tait  graph} $\Gamma_{\xi}$ is the graph on $\Sigma$ whose vertices are in one-to-one correspondence with the black cells of $\xi$ and whose edges correspond to the crossings of $D$. The edges are signed $\pm$ according to the convention shown on the right hand side of Figure \ref{fig_band_crossings}. If, in addition, $M_{\xi}$ is oriented, then each vertex of $\Gamma_{\xi}$ is also signed $+$ or $-$ according to whether the orientation of the corresponding disc agrees or disagrees, respectively, with the orientation of $\Sigma$. This notation will be used ahead in Section \ref{sec: oriented} when we discuss the oriented subgroup $\vec{V}$ of $V$.

By a \emph{virtual Tait graph}, we mean a Tait graph for a checkerboard surface $M_{\xi}$ on a surface $\Sigma$ that has been projected onto a fixed screen $S \subset \mathbb{R}^2$ of  the punctured background surface $\Sigma_{\infty}$.  Note that in a virtual Tait graph, it may be assumed that all the vertices occur in the $0$-handle $H^0$ of $S$. Although edges of $\Gamma_{\xi}$ intersect in $\Sigma$ only at vertices, edges in $S$ may also intersect in the virtual regions, so that transversal intersections of edges are marked as virtual crossings. Also, note that the image of $\Gamma_{\xi}$ in $S$ retains a cyclic ordering of the edges around a vertex, and this will be considered as part of the given data of a virtual Tait graph. For the spanning surface shown in Figure \ref{fig_disc_band_example}, the Tait graph and virtual Tait graph are shown in Figure \ref{fig_virt_tait_graph_example}.

\begin{figure}[htb]
\[
\xymatrix{
\begin{array}{c}
\includegraphics[width=1.25in]{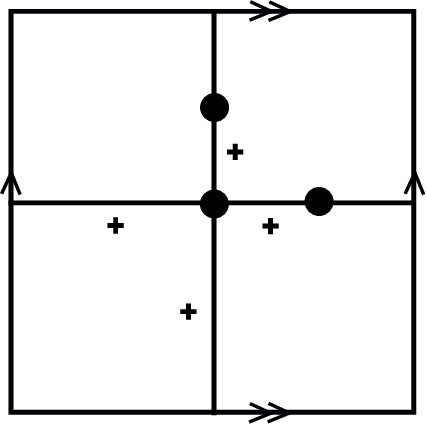}
\end{array} \ar[r] & \begin{array}{c}
\includegraphics[width=2in]{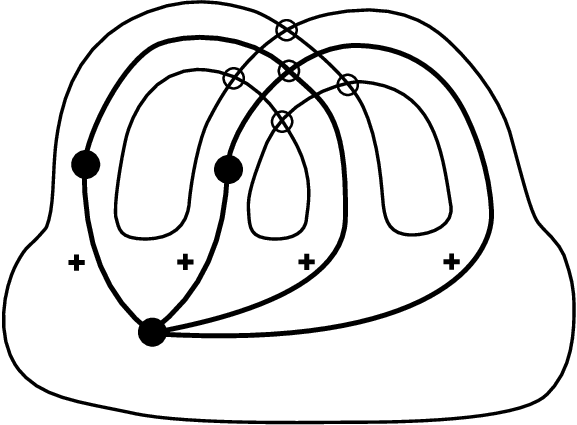}
\end{array} \ar[r] & \begin{array}{c}
\includegraphics[width=1.5in]{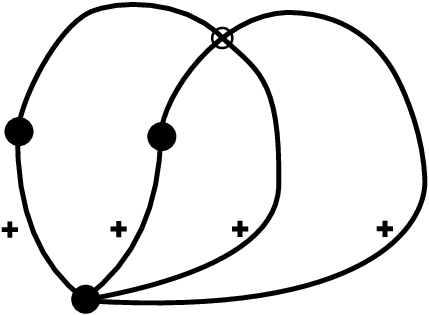}
\end{array}
}
\]
\caption{Tait graph and a virtual Tait graph for the spanning surface in Figure \ref{fig_disc_band_example}.} \label{fig_virt_tait_graph_example}
\end{figure}

\section{Construction and Realization of CC Virtual Links from $V$}\label{sec: construction}
This section contains the proof of Theorem \ref{thm_intro_real}. In Section \ref{sec_thm_intro_real_forward}, we show how to obtain a checkerboard colorable virtual link diagram for every element of $V$. For the reader's convenience, we review the proof of Jones' realization theorem for $F$ in Section \ref{sec_jones_review}. In Section \ref{sec_real}, this is generalized to $V$. An example of the realization algorithm is given in Section \ref{sec_example}. A comparison with the Kodama-Takano realization algorithm  \cite{KTVF} for $\mathit{VF}$ is given in Section \ref{sec_vf}. 
\subsection{From Thompson's Group $V$ to CC Virtual Links} \label{sec_thm_intro_real_forward}
Let $(T_+,T_-,\sigma)$ be a reduced triple representing $v \in V$. To construct a checkerboard colorable virtual link $\mathscr{L}_V(v)$, we first work with each tree separately. From each tree, we build a tangle using the same process as in \cite{jones_unitary, jones18}, which was described in Section \ref{sec: FT links}.  We checkerboard-shade the resulting tangle diagram, keeping the outside face unshaded. The resulting shaded surface has, for each leaf of the tree, one band meeting the boundary of the unit cube transversely. We now attach the bands of the two surfaces according to the permutation $\sigma$, creating virtual crossings of bands whenever they overlap. The virtual link $\mathscr{L}_V(v)$ is the boundary of the resulting checkerboard surface, see Figure \ref{fig: cc links from V}. 

\begin{figure}
\includegraphics[scale=0.4]{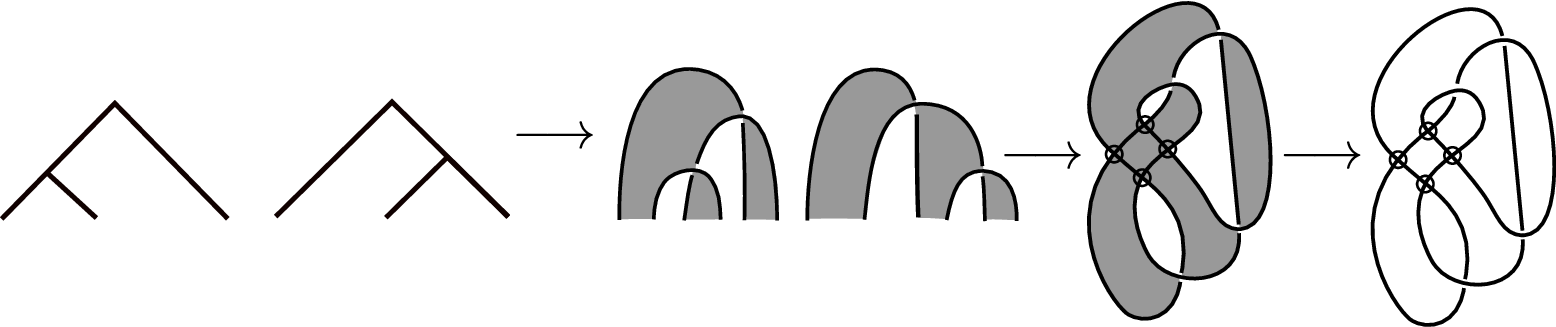}
\put(-250,10){$2$}
\put(-229,10){$1$}
\put(-205,10){$3$}
\caption{Constructing checkerboard-colorable virtual links from elements of $V$.}\label{fig: cc links from V}\end{figure}

\begin{lemma} For all $v \in V,\mathscr{L}_{V}(v)$ is checkerboard colorable. \end{lemma}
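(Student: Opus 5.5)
The plan is to show that $\mathscr{L}_V(v)$ is, by construction, the boundary of a spanning surface realized as a virtual disc-band surface. Then I would convert that surface into a checkerboard coloring on a closed surface $\Sigma$, or equivalently show $[D]=0\in H_1(\Sigma;\mathbb{Z}_2)$.

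\textbf{Step 1: the shaded surface is a virtual disc-band surface.} Write $M_\pm$ for the shaded surfaces obtained from the Jones tangles of $T_\pm$. Each $M_\pm$ is a classical checkerboard surface of a planar tangle diagram, so it decomposes into discs (the shaded regions) and half-twisted bands (one at each crossing). It also has one band for each leaf meeting the boundary of the cube. Gluing these leaf bands according to $\sigma$ gives a surface $M$ made of discs and bands, with classical half-twists and with virtual band crossings only where bands overlap. Hence $M$ is a virtual disc-band surface in the sense of Section \ref{sec_virt_tait_graphs}. The virtual link $\mathscr{L}_V(v)$ is $\partial M$ by definition. I also need to check that $M$ has no closed components; this holds because every disc and band has boundary lying on the link diagram.

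\textbf{Step 2: realize $M$ as an honest spanning surface.} Apply the Carter--Kamada--Saito construction to the diagram $D=\partial M$: put $0$-handles at the classical crossings and untwisted $1$-handles along the arcs, then cap off. Equivalently, thicken $M$ itself, letting bands pass over one another at virtual band crossings. This gives an embedding of $M$ into $\Sigma\times[0,1]$ for a closed orientable $\Sigma$, with $\partial M=\mathcal{L}$ a representative of $\mathscr{L}_V(v)$. The key point is that a virtual band crossing is removed by the same device as a virtual crossing, namely by letting one $1$-handle of the supporting surface pass over the other. Bands of $M$ never carry virtual twists, so the projection of $M$ to $\Sigma$ is an immersion whose only singularities are the classical crossing half-twists.

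\textbf{Step 3: conclude the checkerboard condition.} Since $\partial M=\mathcal{L}$, the class $[\mathcal{L}]$ is zero in $H_1(\Sigma\times[0,1];\mathbb{Z}_2)\cong H_1(\Sigma;\mathbb{Z}_2)$, which is exactly the definition of CC. To match the coloring formulation, I would build $\Sigma$ as a regular neighborhood of the projection of $M$ with discs capped on its complementary boundary circles. Color the image of $M$ black and the capping discs white. Each edge of $D$ is then a boundary arc of $M$, so it has black on one side and white (or a cap) on the other.

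The main obstacle is Step 2: making precise that the virtual band crossings coming from $\sigma$ lift to genuinely disjoint bands in a thickened surface. For this I would cite the correspondence between virtual disc-band surfaces and disc-band surfaces in $\Sigma\times[0,1]$ from \cite{chrisman_vss,boden_chrisman_karimi}. I would note that it is used here in the reverse direction: from a virtual disc-band surface, add one handle pair to the screen for each virtual band crossing.
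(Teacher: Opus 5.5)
Your proof is correct and follows the paper's own argument. The paper gives no separate proof because $\mathscr{L}_V(v)$ is \emph{defined} as the boundary of the virtual disc-band surface obtained by gluing the two shaded tree surfaces along $\sigma$; your Steps 2--3 just make explicit the passage from that surface to a $\mathbb{Z}_2$-null-homologous representative in some $\Sigma\times[0,1]$.

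One small correction: the Carter--Kamada--Saito surface of $D$ alone is not literally ``equivalent'' to the capped thickening of $M$, since they can differ by stabilizations when a black region is not a disc. So you should take $\Sigma$ to be the capped thickening of $M$, as you in fact do in Step 3.
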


Note that for $v \in F$, the bands are attached from left to right, recovering Jones' map from $F$ to links in $S^3$. The annular construction $\mathscr{L}_{\mathbb{A}}$ of \cite{LL25} is also recovered in the following sense:

\begin{lemma}\label{lem: TV diagrams} 
    For $v \in T$, $\mathscr{L}_{V}(v)$ can be realized as link in the thickened annulus that is virtually equivalent to $\mathscr{L}_{\mathbb{A}}(v)$, via a diffeomorphism of the annulus.
\end{lemma}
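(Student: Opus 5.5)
For $v\in T$ with reduced triple $(T_+,T_-,\sigma)$, the permutation $\sigma$ is a cyclic shift: leaf $i$ of $T_+$ goes to leaf $i+k-1 \pmod n$ of $T_-$. The plan is to show that the checkerboard surface built for $\mathscr{L}_V(v)$ sits in a neighbourhood of an annulus. Then its boundary is an annular link, and after a diffeomorphism of the annulus it coincides with the link $\mathscr{L}_{\mathbb{A}}(v)$ produced from the cylindrical strand diagram in \cite{LL25}.

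First I would describe $\mathscr{L}_V(v)$ concretely. Jones' construction applied to $T_+$ and $T_-$ separately gives two shaded tangle diagrams $\tau_+$ and $\tau_-$ in discs. Each is a disc-band surface with $n$ bands meeting the boundary, in left-to-right order. The cyclic permutation pairs the bands of $\tau_+$ with those of $\tau_-$ so that the first $n-k+1$ bands of $\tau_+$ join, in order, to the bands $k,\dots,n$ of $\tau_-$. The remaining $k-1$ bands join, in order, to bands $1,\dots,k-1$. In the planar picture these two parallel families of straight bands cross each other only virtually, in a single block of virtual band crossings. By the detour move we may reroute the second family so that it leaves $\tau_+$ to the right, passes around the outside of $\tau_-$, and re-enters from the left. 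Nothing about the virtual link type or the checkerboard surface changes, since only virtual crossings are moved.

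Next I would pass to a surface via the Carter--Kamada--Saito construction. The rerouted diagram has the property that all its virtual crossings occur where the second family of bands passes over the first family. These can be removed by adding a single $1$-handle to the plane: run the rerouted bands over an arc that goes around the outside. Equivalently, the diagram lies on an annulus $\mathbb{A}=S^1\times[0,1]$, with the two families running around the core in parallel. So $\mathscr{L}_V(v)$ is realized as a link in $\mathbb{A}\times[0,1]$ (sitting inside a torus or the sphere, as appropriate), and no virtual crossings remain.

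Finally I would compare this annular diagram with the construction of \cite{LL25}. There the cylindrical strand diagram of $(T_+,T_-;k)$ places $\tau_+$ and $\tau_-$ on the annulus, joining their leaves around the cylinder with the twist recorded by $k$. The annular diagram obtained above has the same two tangles, joined by the same cyclic matching. They therefore differ only by how the connecting bands wind around the core, which is a power of a Dehn twist along the core together with possibly a reflection fixing orientations. I expect this step to be the main obstacle. It requires checking the convention in \cite{LL25} for which leaf carries the decoration and in which direction the strands wrap, and then exhibiting the explicit diffeomorphism of $\mathbb{A}$. That diffeomorphism is a Dehn twist $\tau^m$ with $m$ determined by $k$ and $n$, chosen to carry one routing onto the other while preserving the checkerboard shading. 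Because it is a diffeomorphism of the annulus, the two links in the thickened annulus give equivalent virtual links, which completes the proof.
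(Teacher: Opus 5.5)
Your strategy is the paper's. You split the cyclic $\sigma$ into the two order-preserving families (the paper sends leaf $i$ along the ``top band'' exactly when $\sigma(i)<i$), exhibit the diagram on an annulus made of the two tangle discs joined by one band per family, and then identify this with $\mathscr{L}_{\mathbb{A}}(v)$ by a diffeomorphism of the annulus. The paper uses no detour. It draws that annulus immersed in the plane as a figure-eight whose two middle bands cross, so the single block of virtual crossings is exactly the self-overlap of the surface, and then untwists it abstractly. Your detour carries out in the plane what that untwisting does abstractly.

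Your middle paragraph, however, misdescribes your own construction. After the detour, the rerouted family leaves the middle strip to the right of the first family. It re-enters to the left of the first family, landing on positions $1,\dots,k-1$, all left of position $k$. Nothing lies outside the two boxes and the strip, because each tree's tangle, including its half of the root strand, sits in its own box. So the rerouted diagram has no virtual crossings at all: there is nothing for a $1$-handle to remove, and no torus enters.

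The annulus is simply a regular neighbourhood of the two boxes and the two families of bands. It is embedded in the plane, with its hole in the channel between $\tau_-$ and the rerouted family. You should name it explicitly, since this choice is what determines the annular link. Alternatively, skip the detour: the Carter--Kamada--Saito surface of the original diagram is a disc (the boxes plus the first family) with one $1$-handle, carrying the second family, passing over it. That is precisely the paper's figure-eight annulus.

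For the final comparison, Dehn twists about the core do no work, since they are isotopic to the identity once the boundary is allowed to rotate. What is needed is only that $\mathscr{L}_{\mathbb{A}}(v)$ lives on the same abstract annulus with the same gluing of tangles and bands. Then the identification is a diffeomorphism, possibly exchanging the two boundary circles. The paper asserts this step at the same level of detail as you leave it.
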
 
\begin{proof}
Let $(T_+,T_-,\sigma)$ be a triple for $v \in V$, and let $n$ refer to the number of leaves in $T_+$ and $T_-$. We will use the cyclic nature of $\sigma$ to realize $\mathscr{L}_{V}(v)$ in an annulus with two half-twists, pictured in Figure \ref{fig: TV equiv}. We divide the annulus into three parts--a top thickened semicircle in which we will embed the tangle corresponding to $T_+$, a bottom thickened semicircle in which we will embed the tangle corresponding to $T_-$, and a middle portion consisting of two bands, one crossing over the other, in which we will embed the strands connecting the two tangles.

Focusing on the middle portion of the twisted annulus, let us refer to the band that passes over from top right to bottom left as the \textit{top band} and the band below it as the \textit{bottom band.} If $\sigma(i)< i$, then the two strands corresponding to the $i$th leaf of $T_+$ will run parallel along the top band. Otherwise, the two strands corresponding to the $i$th leaf of $T_+$ will run along the bottom band. Because $\sigma$ is cyclic, we may apply this rule for all $1 \leq i \leq n$ and the $2n$ corresponding strands will remain embedded. A diffeomorphism untwisting the annulus returns $\mathscr{L}_{\mathbb{A}}(v)$.
\end{proof}

\begin{figure}\includegraphics[scale=0.5]{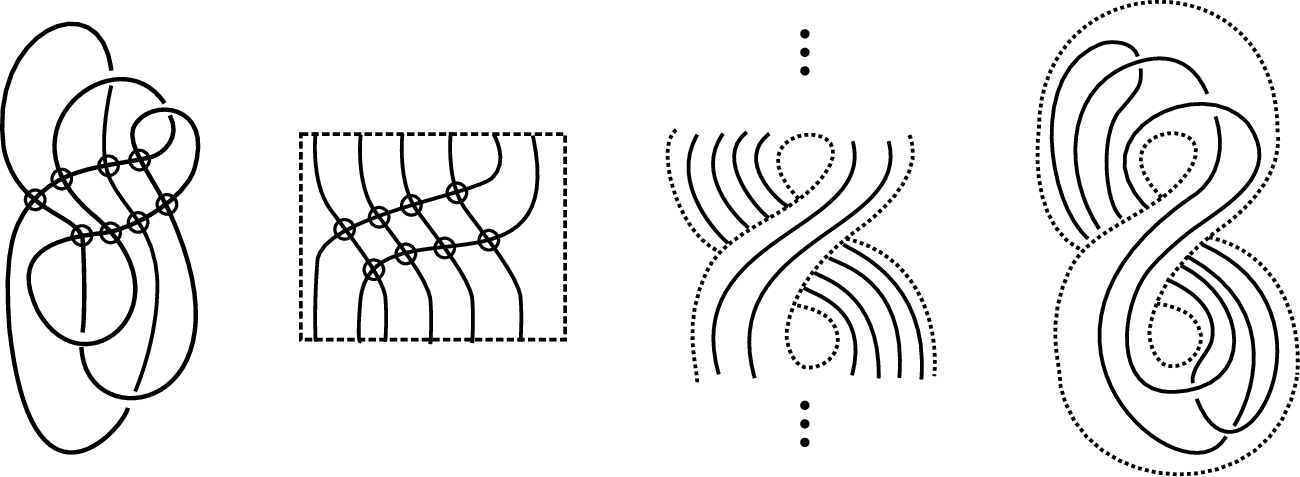}\caption{The virtual link for $h \in T$ realized on the thickened annulus; compare with Figure \ref{fig: links from T}.}\label{fig: TV equiv} \end{figure}

We note that because more relations are available in virtual knot theory than annular knot theory, $\mathscr{L}_{\mathbb{A}}(v)$ and $\mathscr{L}_{V}(v)$ may be virtually equivalent but not isotopic in the thickened annulus.

\begin{lemma}\label{lem: cancelingcarets-resultsinunknot} Let $(S_+,S_-,\tau)$ be the result of adding a canceling caret to some reduced triple $(T_+,T_-,\sigma)$. Then $\mathscr{L}_{V}(S_+,S_-,\tau)$ differs from $\mathscr{L}_{V}(T_+,T_-,\sigma)$ by a disjoint copy of the unknot. \end{lemma}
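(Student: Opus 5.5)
The plan is to localize the effect of the canceling caret on the checkerboard surface and then identify the extra piece as an unknot disjoint from the rest of the link.

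\textbf{Step 1: describe the caret combinatorially.} A canceling caret in $(S_+,S_-,\tau)$ consists of:
\begin{itemize}
\item a caret in $S_+$ with adjacent leaves $i,i+1$;
\item a caret in $S_-$ with adjacent leaves $j,j+1$;
\item the condition $\tau(i)=j$ and $\tau(i+1)=j+1$.
\end{itemize}
Removing both carets gives $T_\pm$, and $\sigma$ is $\tau$ with the pair $i,i+1$ collapsed to a single leaf.

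\textbf{Step 2: compare the tangles from the two trees.} The tangle (checkerboard surface) built from a tree is computed locally at each vertex, as in Jones' construction. So the tangle for $S_+$ agrees with that for $T_+$ away from the new caret. The caret vertex is a trivalent vertex made ternary, which contributes one crossing. At the leaves, where $T_+$ had one band, $S_+$ now has two adjacent bands $b_i,b_{i+1}$. These are joined near the caret vertex by a half-twisted band, or equivalently by a crossing, just as in the planar Jones case. The same holds for $S_-$ at leaves $j,j+1$.

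\textbf{Step 3: use that the two bands travel together.} Since $\tau$ sends $i\mapsto j$ and $i+1\mapsto j+1$, the bands $b_i$ and $b_{i+1}$ run side by side through the permutation region. Using the detour move, and the fact that by Definition \ref{def: abstract} the drawing of $D(\tau)$ may be chosen freely up to rotation-system-preserving isomorphism, I will draw these two bands parallel. Every virtual band crossing they meet is then a crossing of this parallel pair with some other band.

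After this, the relevant piece of surface is a disc-band configuration: two parallel bands whose ends are joined at the top caret and at the bottom caret. Its boundary has three parts:
\begin{itemize}
\item the two outer strands, which continue exactly as the single leaf band of $(T_+,T_-,\sigma)$;
\item an inner strand, running between $b_i$ and $b_{i+1}$, which closes up into a circle after passing through the two caret crossings.
\end{itemize}

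\textbf{Step 4: show the inner circle is a split unknot.} I first remove the two caret crossings by Reidemeister~I moves, or by untwisting the half-twists of the connecting band as in the $F$ case, where this produces exactly Jones' extra unknot. The inner circle then becomes a simple closed curve. Its only crossings with the rest of the diagram are virtual crossings inherited from the virtual band crossings of the parallel pair.

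To finish, I apply a detour move: the inner arc contains only virtual crossings, so it can be shrunk into a small disc. Under this move the outer strands become the diagram $\mathscr{L}_V(T_+,T_-,\sigma)$, and the inner circle becomes a disjoint trivial unknot.

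\textbf{Main obstacle.} The hard step is the local analysis at the caret vertices in Step 4. I must check that, in the checkerboard convention with the outside face unshaded, the inner boundary circle really passes through the two caret crossings in a way removable by $\Omega1$ moves, and is not linked classically with the outer strands. The first thing I will try is to reduce to Jones' planar computation for $F$: the neighborhood of the caret together with the parallel bands is planar apart from virtual crossings, so after the detour that step can be taken verbatim from the $F$ and $T$ cases.
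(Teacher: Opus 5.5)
Your overall plan (localize at the caret, clear virtual crossings by detours, exhibit a split unknot) is the right one. However, the local topology in Steps 3--4 is misdescribed, and the $\Omega 1$ step cannot work. This is exactly the point you flagged as the main obstacle.

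In Jones' ternary-tree picture, the crossing at a caret vertex pairs the edge above the vertex with the middle edge, and the left leaf edge with the right leaf edge. Because $b_i,b_{i+1}$ are glued in parallel to the two bands of the bottom caret, the left and right leaf strands of the top caret are joined through the permutation region to those of the bottom caret. It is these strands that close up into the new component $U$. The middle strands instead splice, through the same two crossings, into the old boundary strand of leaf $i$.

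So the four edges of $b_i\cup b_{i+1}$ are not ``two outer strands plus an inner circle.'' $U$ consists of the same-side edges of the two bands, one on the gap side and one outer. It runs around the other gap-side edge, which belongs to the rest of the link, crossing it once at each caret. Neither caret crossing is a self-crossing of $U$, so no $\Omega 1$ move applies. Likewise, the boundary of the gap between $b_i$ and $b_{i+1}$ is a bigon made of an arc of $U$ and an arc of the other strand, not a link component.

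The missing idea is why $U$ nevertheless comes off. Since $S_-$ is drawn as the reflection of a tree tangle across a horizontal line, $U$ has the same over/under status at both caret crossings: it is the over-strand at both. Between the carets, $U$ meets the rest of the diagram only in virtual crossings. First detour those other arcs past the top or bottom of $U$. This must precede removing the classical crossings, since the bigon between $U$ and the strand it encircles is crossed by other bands. Then a single $\Omega 2$ move frees $U$; this is the paper's argument.

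Your ``untwisting'' alternative can be rescued only at the level of the surface. Rotating the new shaded strip between the two carets by $\pi$ about its core removes both half-twists at once, precisely because they are mirror images. After that, the gap really is a disc on $\Sigma$ bounded by the image of $U$. Untwisting a single half-twisted band, by contrast, is not an isotopy and changes the link.
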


\begin{proof}
        The canceling caret introduces a unknotted component $U$. Then $U$ has two classical crossings and maybe some virtual crossings with the rest of the link diagram (see, for example Figure \ref{fig: TV carets}). The classical crossings are over-crossings by the unknotted component. Those arcs on the link diagram having virtual crossings with $U$ can be detoured over the over-crossings (i.e. either the local maximum or minimum). Afterwards, $U$ has only two over-crossings and no virtual crossings. Using a Reidemeister 2 move, $U$ can made disjoint for the rest of the diagram.
\end{proof}

\begin{figure}\includegraphics[scale=0.4]{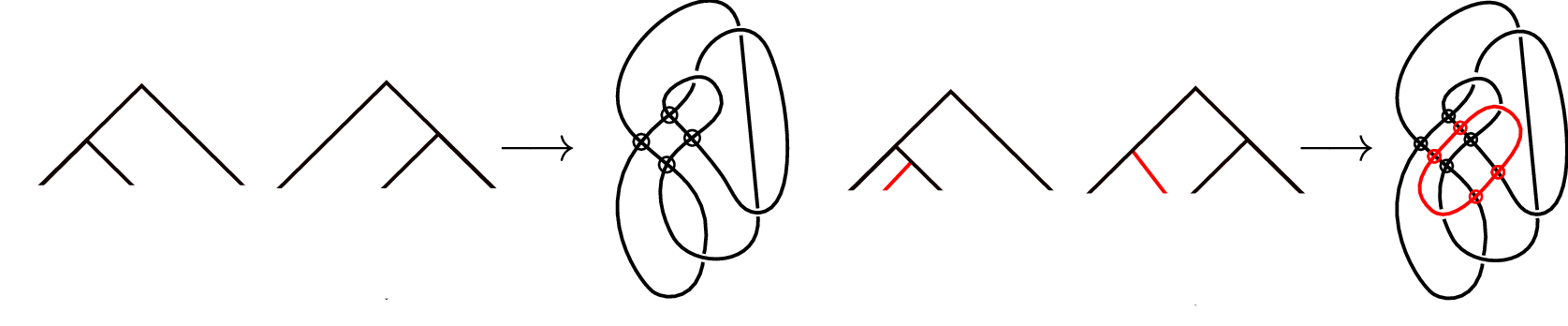}
\put(-267,17){$2$}
\put(-223,17){$1$}
\put(-246,17){$3$}
\put(-102,17){$2$}
\put(-154,20){,}
\put(-88,17){$3$}
\put(-77,17){$1$}
\put(-57,17){$4$}
\caption{A canceling caret in $V$ producing an extra unlinked unknotted component of the link. }\label{fig: TV carets}\end{figure}

\subsection{Review: Jones' realization theorem for $F$}\label{sec_jones_review}
 Before continuing with the proof of Theorem \ref{thm_intro_real}, it is instructive to first review Jones' realization theorem for $F$ (see \cite{jones_unitary}, Sections 4.1 and 5.3.1). Define $\mathbb{R}^2_+=\{(x,y) \in \mathbb{R}^2|y \ge 0\}$ and $\mathbb{R}^2_-=\{(x,y)\in\mathbb{R}^2|y \le 0\}$. The main idea is to relate pairs $(T_+,T_-)$ of planar rooted bifurcating trees to Tait graphs of links. Let $N$ be the number of leaves of $T_{\varepsilon}$, where $\varepsilon=\pm$. First we embed $T_{\varepsilon}$ into $\mathbb{R}^2_{\varepsilon}$ so that its leaves lie on the coordinates $(1/2,0),(3/2,0),\ldots,(2N-1)/2$ and so that each edge is a line segment having slope equal to either $+1$ or $-1$. Below (resp. above) each non-leaf vertex of $T_+$ (resp. $T_-$) we also require that the left edge has positive (resp. negative) slope and the right edge has negative (resp. positive) slope. The next step is to construct a second pair $G(T_+,T_-)=(G_+,G_-)$ of trees as follows. The vertices of $G_{\varepsilon}$ are at coordinates $(0,0),(1,0),\ldots,(N,0)$. Note that in each region in $\mathbb{R}^2_{\varepsilon}$ defined by $T_{\varepsilon}$, there is exactly one vertex of $G_{\varepsilon}$. For every edge $e$ of $T_{\varepsilon}$ having slope of sign $\varepsilon$, we draw an edge of $G_{\varepsilon}$ between the vertices in the regions bordered by $e$. We will typically draw these edges as semi-elliptical arcs transversal to the edge of $T_{\varepsilon}$. See Figure \ref{fig_t_to_g}. To make $G(T_+,T_-)$ into a Tait graph, we give all the edges in $G_{\varepsilon}$ the sign $\varepsilon$. Then when $(T_+,T_-)$ is a reduced pair, and $\mathscr{L}(T_+,T_-)$ is given its standard checkerboard coloring $\xi$, the Tait graph of $\xi$ is $G(T_+,T_-)$.

\begin{figure}[htb]
\[
\begin{array}{ccc}
\begin{array}{c}
\includegraphics[width=2in]{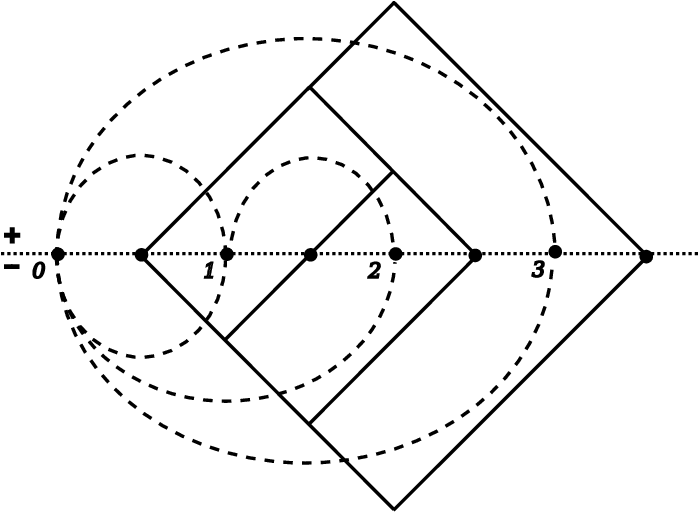}
\end{array} & \begin{array}{c}
\includegraphics[width=1.8in]{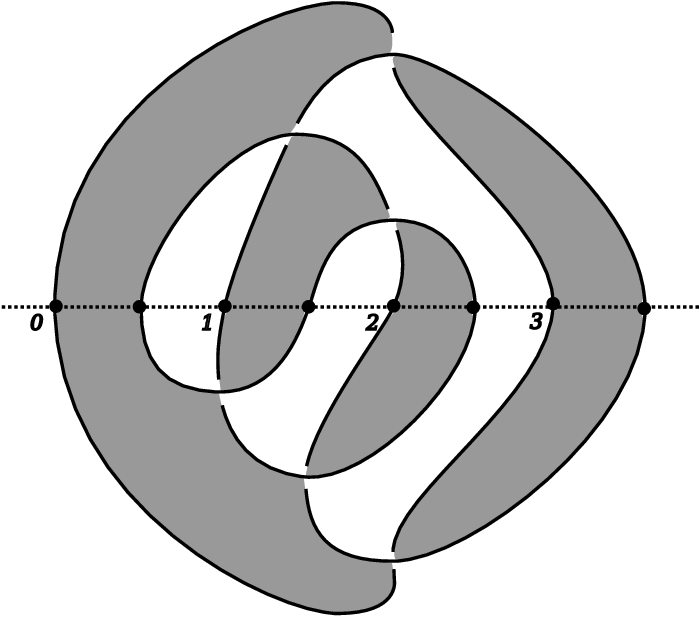}
\end{array} & \begin{array}{c}
\includegraphics[width=2.2in]{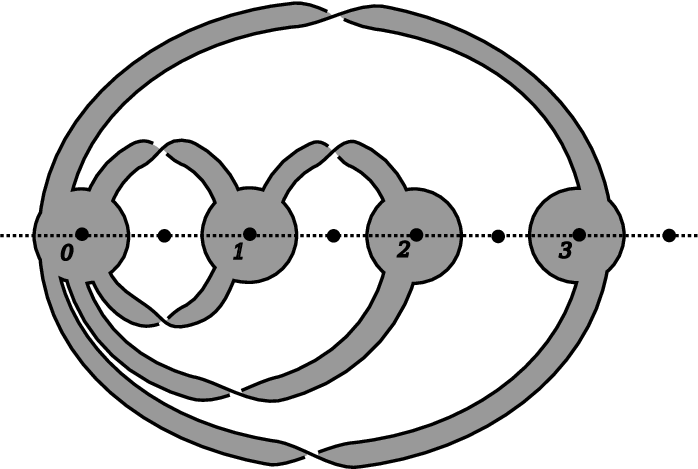}
\end{array}
\end{array}
\]
\caption{ (Left) Constructing $T_{\varepsilon}$ (solid lines) from $G_{\varepsilon}$ (dashed semicircles) and vice versa. The center shows the checkerboard surface for $(T_+,T_-)$ and the right-hand sides shows the (isotopic) checkerboard surface for the Tait graph $G(T_+,T_-)$.}\label{fig_t_to_g}
\end{figure}

Ideally, one would begin with a checkerboard coloring of a link diagram $D \subset \mathbb{R}^2$ with Tait graph $G$ and assemble a pair of planar rooted bifurcating trees $(T_+,T_-)$ so that $G(T_+,T_-)=G$. It would then follow that every diagram $D$ can be realized as $\mathscr{L}(T_+,T_-)$ for some reduced pair $(T_+,T_-)$. Unfortunately, it may not be possible to do this with the original Tait graph $G$. Jones' fix is to modify $G$ with a set of moves that preserve the link type. These moves, called Type RI, Type RIIa, and Type RIIb are shown in Figure \ref{fig_type_moves}. They correspond in the evident way to Reidemeister 1 and 2 moves on the link diagram $D$. Graphs related by these moves are said to be \emph{$2$-equivalent}.

Jones then identifies a condition on a graph $G$, called \emph{Thompson} (see \cite{jones_unitary}, Definition 5.3.9), and gives an algorithm for drawing a uniquely defined pair of planar rooted bifurcating trees $(T_+,T_-)$ such that $G=G(T_+,T_-)$ when $G$ satisfies the Thompson condition (\cite{jones_unitary}, Lemma 4.1.4). Lastly, the proof is completed by showing that every Tait graph is $2$-equivalent to a Thompson one (\cite{jones_unitary}, Lemma 5.3.13). This recipe is adapted to $V$ below but modified for non-planarity. 

\begin{figure}[htb]
\[
\begin{array}{c}
\xymatrix{\begin{array}{c}\includegraphics[width=.6in]{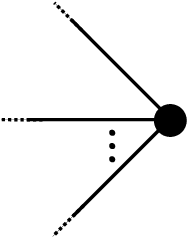} \end{array} \ar[r] & \ar[l] \begin{array}{c}\includegraphics[width=1in]{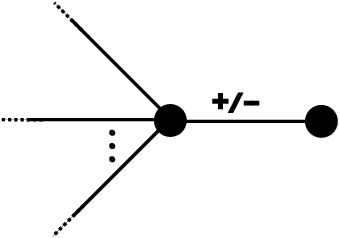} \end{array} } \\ \text{\underline{Type RI:}}
\end{array} \quad \begin{array}{c}
\xymatrix{\begin{array}{c}\includegraphics[width=2in]{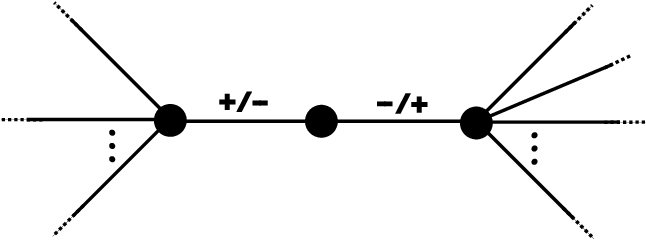} \end{array} \ar[r] & \ar[l] \begin{array}{c}\includegraphics[width=1in]{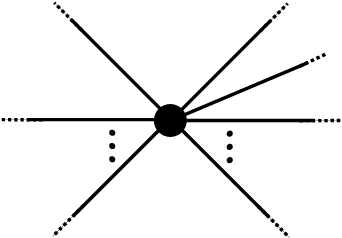} \end{array} } \\ \text{\underline{Type RIIa:}}
\end{array}
\]
\[
\begin{array}{c}
\xymatrix{\begin{array}{c}\includegraphics[width=1.5in]{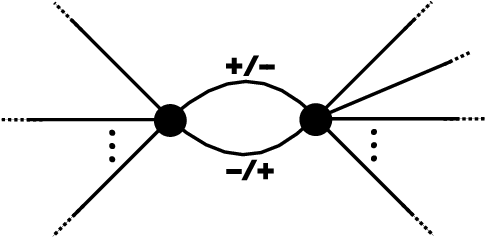} \end{array} \ar[r] & \ar[l] \begin{array}{c}\includegraphics[width=1.5in]{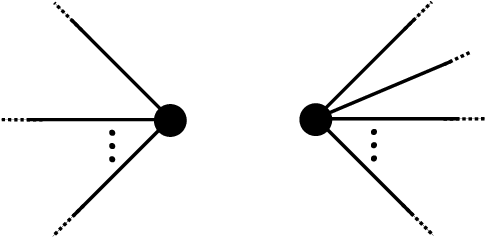} \end{array} } \\ \text{\underline{Type RIIb:}}
\end{array} 
\]
\caption{Moves generating 2-equivalence of Tait graphs.}\label{fig_type_moves}
\end{figure}

\subsection{Realizing CC virtual links} \label{sec_real} Now with Jones' argument for $F$ in mind, we proceed to the case of realizing all CC virtual links with $V$. Our first task is to extend the Thompson property of classical Tait graphs to virtual Tait graphs. Following \cite[Definition 5.3.5]{jones_unitary}, the initial step is to define a notion of \emph{standard} for virtual Tait graphs.

\begin{definition}[Standard] \label{def_standard} Let $G$ be a virtual Tait graph in $\mathbb{R}^2$ with $N+1$ vertices, $N>0$. Then $G$ will be called \emph{standard} if the following conditions are satisfied:
\begin{enumerate}
    \item its vertices are at the coordinates $(0,0), (1,0),\ldots, (N,0)$, 
    \item each edge $e$ can be parameterized by $(x_e(t),y_e(t))$, $0 \le t \le 1$, with $x_e'(t)>0$ for $0 <t<1$ and either $y_e(t)>0$ for $0<t<1$ or $y_e(t)<0$ for $0<t<1$, and
    \item all virtual crossings of $G$ lie in the first quadrant.
\end{enumerate}
A standard graph $G$ can be decomposed into two graphs $G_{\pm}=G \cap \mathbb{R}^2_{\pm}$, called the \emph{upper} and \emph{lower} subgraphs of $G$, respectively. Note that by definition $G_-$ has no virtual crossings. Note also that property (2) can be satisfied with a semi-elliptical arc with horizontal axis along the $x$-axis.
\end{definition} 

We will say that two graphs are \emph{virtually $2$-equivalent} if they can be related by a finite sequence of the moves in Figure \ref{fig_type_moves}, detour moves along edges which preserve the cyclic ordering along the vertices, and planar isotopies. Given two virtual Tait graphs $G_1,G_2$, let $L_1,L_2$ denote the virtual link diagrams reconstructed from them. Then if $G_1,G_2$ are virtually $2$-equivalent, then $L_1$ and $L_2$ are clearly equivalent virtual links. Hence, the next step in continuing Jones' argument is to show that every virtual Tait graph can be made standard by virtual $2$-equivalence.

\begin{figure}[htb]
\[
\xymatrix{
\begin{array}{c}\includegraphics[width=1.5in]{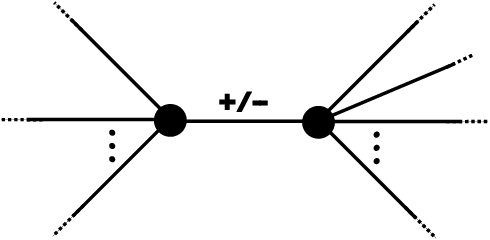} \end{array} \ar[r] & \ar[l] \begin{array}{c}\includegraphics[width=2.3in]{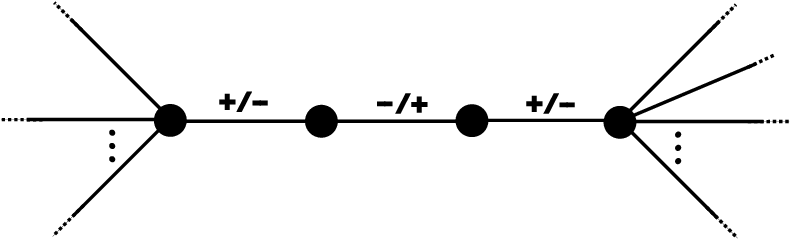} \end{array}} 
\]
\caption{Subdividing an edge with a Type RIIa move.}\label{fig_subdivide_edge}
\end{figure} 

\begin{lemma} \label{lemma_prepare_standard} Every CC virtual link type has a virtual Tait graph that is virtually 2-equivalent to a standard one. 
\end{lemma}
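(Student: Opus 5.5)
We will begin with a cellularly embedded, $\mathbb{Z}_2$-null-homologous representative of the CC virtual link type $L$. By \cite{boden_chrisman_karimi}, Theorem 1.8, such a diagram $D$ exists on a minimal genus surface $\Sigma$. We fix a checkerboard coloring $\xi$ of $D$ and take its Tait graph $\Gamma_\xi$. As in Section \ref{sec_virt_tait_graphs}, we project $\Gamma_\xi$ to the screen $S$ of $\Sigma_\infty$. This gives a virtual Tait graph $G$ whose vertices all lie in the $0$-handle $H^0$. Its only virtual crossings occur in the virtual regions, and cyclic orderings at vertices are recorded.

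The first step places the vertices. Since $H^0$ is a disc, a planar isotopy supported near $H^0$ lets us assume the $N+1$ vertices lie at $(0,0),(1,0),\ldots,(N,0)$ on a horizontal line. We then reroute every edge by detour moves so that it leaves each vertex into the upper half plane. The edge follows a path in the upper half plane and enters its other endpoint from above. All crossings created in this rerouting are declared virtual, so this is a detour move on the virtual Tait graph. It does not change the virtual link type, provided the cyclic ordering at each vertex is preserved.

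The cyclic ordering is the delicate point. Pushing all half-edges at a vertex upward forces their counterclockwise order to be realized by the order of their initial directions in the upper half plane. Any cyclic order can be realized this way by cutting the cycle at some point, so each vertex can keep its rotation. Each edge then gets a parametrization joining its two endpoints. Since detours are unrestricted, we redraw each edge as a semi-ellipse over the $x$-axis joining its endpoints, with $x_e'>0$. The rotation at each endpoint is enforced locally by small virtual twists near the vertex, which are permitted because edges cross only virtually.

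The main obstacle is compatibility of the rotation system with condition (2). At the left endpoint of a semi-ellipse the half-edges are ordered by the position of their far endpoint, so the induced cyclic order at a vertex is forced. It may disagree with the given one, and virtual crossings alone cannot change the rotation of a vertex. This is where the Reidemeister-type moves of Figure \ref{fig_type_moves} are needed. Where an edge would need to wrap around a vertex, we subdivide it by a Type RIIa move as in Figure \ref{fig_subdivide_edge}. This inserts a new vertex placed on the $x$-axis at a convenient position, for instance to the far left or far right. The edge then becomes two monotone arcs, one of which may be drawn in the lower half plane. We use Type RI loops if a loop edge must be eliminated. After sufficiently many subdivisions, every edge is an $x$-monotone arc in $\mathbb{R}^2_+$ or $\mathbb{R}^2_-$ with consistent rotations. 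Finally, since all crossings are virtual, detour moves push every virtual crossing into the upper half plane, and indeed into the first quadrant after a horizontal translation. This is done by placing all lower edges as nested, noncrossing semi-ellipses, which is possible once each lower edge joins consecutive vertices after further subdivision. This yields conditions (1)–(3), and we will check each such move preserves the virtual link type.
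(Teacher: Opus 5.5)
Your proposal has a genuine gap, and it sits at the step you yourself call the main obstacle. The assertion that ``after sufficiently many subdivisions, every edge is an $x$-monotone arc in $\mathbb{R}^2_+$ or $\mathbb{R}^2_-$ with consistent rotations'' is essentially the content of the lemma, and you give no procedure or termination argument for it. The mechanisms you do offer fail as stated. In a standard graph the rotation at a vertex $k$ is constrained in a specific way. Reading counterclockwise from the positive $x$-direction, every upper right-going edge precedes every upper left-going edge: an $x$-monotone edge leaving $k$ to the right lies in $\{x>k\}$ and one leaving to the left lies in $\{x<k\}$, so no crossing can interleave them. The lower edges appear in their nested order, because condition (3) of Definition~\ref{def_standard} forbids crossings in $\mathbb{R}^2_-$. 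So the ``small virtual twists'' of your third paragraph can only permute edges inside the upper right-going block or inside the upper left-going block; they do not enforce an arbitrary rotation, as your fourth paragraph in effect concedes. Your handling of the lower half plane is also incorrect. Subdivision only adds vertices, so a lower edge from $j$ to $m$ can never be cut into pieces joining consecutive vertices when an existing vertex lies strictly between $j$ and $m$. Detouring a crossing of two lower edges up into $\mathbb{R}^2_+$ carries an edge across the axis, which changes the rotation at its endpoints. Finally, a loop edge is not in general removable by a Type RI move; it must be subdivided.

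The missing idea is to arrange things so that the rotation never needs repair. The paper first uses Type RIIa subdivisions (Figure~\ref{fig_subdivide_edge}) to remove loops and multiple edges and, crucially, so that no two edges sharing a vertex cross virtually. With the $1$-handles of the screen placed in the first quadrant, it moves the vertices onto the $x$-axis by an ambient planar isotopy, which preserves the rotation automatically. It then subdivides each edge \emph{in place} at the points where it meets the axis. Now every edge lies in an open half plane and edges at a common vertex are disjoint, so by a Jordan-curve argument the rotation at each vertex already agrees with the one induced by semicircles. Redrawing the edges as semicircles is then a legitimate detour, and the virtual crossings stay in the first quadrant because lower edges never cross. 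Your all-edges-up setup can also be rescued directly. After removing loops, subdivide each edge at its right endpoint and place the new degree-two vertices to the right of all original vertices. Every original vertex then has only right-going upper edges, whose cyclic order is realized by choosing departure slopes and letting edges cross virtually in the first quadrant. Degree-two vertices impose no rotation constraint, so the result is standard with $G_-=\emptyset$.
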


\begin{proof} Let $L$ be a CC virtual link type. Let $D$ be a diagram for $L$ on a surface $\Sigma$ of minimal genus. By the discussion in Sections \ref{sec_virtual_links} and \ref{sec_virt_tait_graphs}, it may be assumed that $\Sigma$ is connected and that $D$ is cellularly embedded. Choose a checkerboard coloring $\xi$ of $D$ and let $\Gamma_{\xi}$ be the Tait graph on $\Sigma$ for one of the checkerboard surfaces, say the black one. Let $\infty$ be a point in one of the white cells of $\xi$. From a handle decomposition $H^0 \cup H^1_1 \cup \cdots \cup H^{1}_{2g}$ of $\Sigma_{\infty}$, we draw the screen $S$ in $\mathbb{R}^2$ as follows. The $0$-handle is drawn as a rectangle $R$ in $\mathbb{R}^2$ with each side parallel to either the $x$-axis or $y$-axis. We position $R$ so that it is bisected by the $x$-axis and contains $(0,0)$ as an interior point. The $1$-handles are then placed as in Figure \ref{fig_screen_definition} so that they lie in the first quadrant. Next, we transfer $\Gamma_{\xi}$ to $S$, so that all of the virtual crossings in the virtual Tait graph $G$ appear in the first quadrant.

We now make a few initial preparations. First, it may be assumed that no edge has both its endpoints on the same vertex, since any edge may be subdivided using a Type RIIa move (see Figure \ref{fig_subdivide_edge}). Similarly, we may assume that there are no multiple edges, i.e. a set of two or more edges all adjacent to the same pair of vertices. Subdividing each of the multiples reduces to total number of such edges, so that by repeating this process we arrive at a graph with no multiple edges. Using further subdivisions, we may also assume that no edges with a virtual crossing are incident to the same vertex. After making these initial adjustments, an isotopy can be used to position the vertices at the required integer coordinates. This can be done by pushing all the vertices along the 1-handles in the screen so that they lie in $R$. Then draw a simple closed curve $R$ that passes through all the vertices. Straightening out this string puts all of the vertices on a line, which can then be assumed to coincide with the $x$-axis. The position of the vertices can then be adjusted to lie on the integer coordinates. It may of course be necessary to stretch $R$ so that its interior includes enough integer coordinates for the vertices of $G$. Note that by construction, the virtual crossings in $G$ only occur in the first quadrant.

Possibly, there is now some edge $e$ which crosses the $x$-axis. In this case, the edge can be subdivided using Type RIIa moves (as in Figure \ref{fig_subdivide_edge}) and the new vertices can be positioned along the $x$-axis (as in Figure \ref{fig_eliminate_axis_crossing}).  This can be continued until there are no edges crossing the $x$-axis. 

\begin{figure}[htb]
\[
\xymatrix{
\begin{array}{c}\includegraphics[width=1.4in]{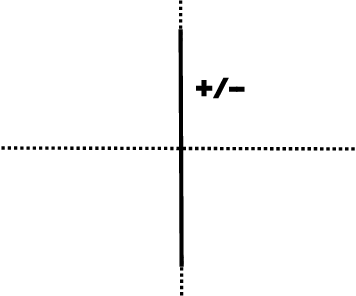} \end{array} \ar[r] &  \begin{array}{c}\includegraphics[width=1.3in]{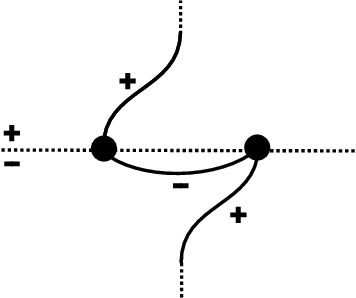} \end{array} & \hspace{-1cm}-\text{or}-\hspace{-1cm} & \begin{array}{c}\includegraphics[width=1.3in]{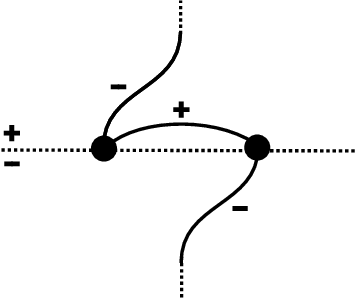} \end{array}} 
\]
\caption{Subdividing an edge which crosses the $x$-axis.} \label{fig_eliminate_axis_crossing}
\end{figure}

 All of the edges of $G$ now lie in either $\mathbb{R}^2_+$ or $\mathbb{R}^2_-$, all of the vertices are on integer coordinates $(0,0),\ldots,(N,0)$, and all of the virtual crossings are in $\mathbb{R}^2_+$. We may now dispense with the screen $S$, but in the end all virtual crossings will remain in $\mathbb{R}_+^2$. Since there are no edges having both endpoints on the same vertex, there is an isotopy taking each edge to a semicircle that lies in either $\mathbb{R}^2_+$ or $\mathbb{R}^2_-$.  It is important to observe that only the endpoints of an edge are important. The classical crossing of the corresponding band of the virtual disc-band surface can be pushed along the edge so that it lies close to one of its endpoints. Using a detour move, the band can then be redrawn as a semicircle edge. The initial preparation now guarantees that the semicircles are all distinct: two semicircles cannot coincide, since the graph has no loops or double edges. Furthermore, the isotopies and detour moves can be done while preserving the cyclic order of the edges around each vertex. This follows because of the assumption that no edges incident to the same vertex have virtual crossings. Hence, the final graph is standard and we are done.
\end{proof}

Now we can generalize Jones' notion of a Thompson graph to the non-planar setting.
\begin{definition}[Thompson graph] \label{def_thompson} A virtual Tait graph $G$, not necessarily planar, is said to be \emph{Thompson} if it is standard and its upper and lower subgraphs $G_{\pm}$ both have the property that every vertex other than $(0,0)$ is connected by an edge to exactly one vertex on its left, $G_{+}$ contains no negatively signed edges, and $G_{-}$ contains no positively signed edges. \end{definition}

As discussed in Section \ref{sec_jones_review}, Jones proved that every planar Thompson graph corresponds to a pair of planar bifurcating trees $(T_+,T_-)$ representing an element of $F$. In the case of $V$, it will be shown ahead that if $G$ is a Thompson virtual Tait graph, then there is a pair of planar bifurcating trees $(T_+,T_-)$ and a permutation $\sigma$ such that $(T_+,T_-,\sigma)$ represents an element of $V$. This leaves two tasks: (1) showing that every standard virtual Tait graph is virtually $2$-equivalent to a Thompson one and (2) identifying the planar trees $(T_+,T_-)$ and the permutation $\sigma$. The next lemma takes care of the first task and the second task is taken up immediately afterwards.
\begin{lemma} \label{lemma_virtual_thompson} A standard virtual Tait graph $G$ is virtually 2-equivalent to a Thompson one. 
\end{lemma}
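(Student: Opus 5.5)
The plan is to adapt Jones' proof of \cite[Lemma 5.3.13]{jones_unitary}, using that all the non-planarity of $G$ sits in $G_+$ and that only the endpoints of an edge matter. Everything will be done with Type RI, RIIa, RIIb moves applied locally near the $x$-axis, together with detour moves along edges that preserve the cyclic order at each vertex. I will treat the two defects of a standard graph separately: wrongly signed edges, and vertices with the wrong number of left neighbours in $G_\pm$.

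First I fix the signs. Suppose $e$ is a negative edge in $G_+$ joining $(i,0)$ to $(j,0)$. Subdivide it by a Type RIIa move as in Figure \ref{fig_subdivide_edge}, inserting a new vertex on the $x$-axis just to the right of $(i,0)$. This replaces $e$ by a pair of edges with the required signs, one lying in $\mathbb{R}^2_-$ near the axis. The long arc still carries the same endpoints up to the new vertex and stays in $\mathbb{R}^2_+$, so its virtual crossings remain in the first quadrant. I will use the RIIb move and insert extra vertices, relabelling the integer coordinates, to keep the graph standard. Positive edges in $G_-$ are handled symmetrically; since $G_-$ has no virtual crossings, this step is purely planar.

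Next I enforce the left-neighbour condition, processing vertices from left to right.
\begin{itemize}
\item If a vertex $(k,0)$, $k>0$, has no edge in $G_+$ to its left, I add a Type RI move. This creates a new vertex just left of it joined by a single positive upper edge, and the new vertex lies in a white region. Alternatively, I subdivide an edge with RIIa so that a short upper edge to the left appears. In either case the count for $(k,0)$ becomes one.
\item If $(k,0)$ has several upper edges to vertices on its left, I keep the shortest one. Each of the others, say to $(i,0)$, I reroute by an RIIa subdivision, inserting a new vertex immediately to the right of $(i,0)$, or alternatively immediately left of $(k,0)$ with a companion lower edge. The original long edge then terminates at a vertex whose left neighbour is supplied by a short edge, exactly as in Jones' planar argument.
\end{itemize}
Each operation strictly decreases the number of vertices that violate the condition to their left, and it adds no violations at vertices already processed. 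This monotone count, ordered from the left, gives termination. The same procedure is applied to $G_-$. Afterwards I repair any sign defects created along the way, checking that the sign fixes do not reintroduce left-neighbour defects: the new edges they produce are short and connect adjacent vertices.

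The main obstacle is the virtual crossings in $G_+$. When an upper edge is rerouted or subdivided, its new endpoint must be inserted on the $x$-axis between existing vertices. The arc must then be redrawn as a semicircle without altering the cyclic order at the vertices and without moving virtual crossings out of the first quadrant. I would handle this as in the final step of Lemma \ref{lemma_prepare_standard}. All new vertices are placed immediately adjacent, on the $x$-axis, to an existing endpoint, and the classical band twist is pushed toward that endpoint. A detour move then redraws the long part of the edge, which carries all of its virtual crossings, as an arc in $\mathbb{R}^2_+$ with its horizontal extent only slightly changed. Because each insertion is local to an endpoint, the cyclic orderings are preserved. I must check that this adjacency never forces two edges with virtual crossings to become incident at the same vertex, since that would break the detour argument; a preliminary subdivision, as in the proof of Lemma \ref{lemma_prepare_standard}, should prevent it.
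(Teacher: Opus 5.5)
There is a genuine gap. The explicit moves you substitute for Jones' are not valid virtual $2$-equivalences, and they do not preserve the Thompson condition.

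\textbf{Sign-fixing step.} A Type RIIa subdivision cannot replace a negative edge $e$ by a pair consisting of a short negative edge in $\mathbb{R}^2_-$ and a long positive edge in $\mathbb{R}^2_+$.
\begin{itemize}
\item A Reidemeister 2 move adds two classical crossings, i.e.\ two edges of opposite sign. It therefore changes the edge count by two and preserves the sum of edge signs.
\item So no sequence of such moves turns one edge of sign $-$ into a pair with signs $\{-,+\}$. The subdivision of Figure~\ref{fig_subdivide_edge} actually produces three edges with signs $\{-,+,-\}$ and two new vertices.
\item Genuinely changing the sign of an edge needs Type RI moves as well. This is why Jones' construction for this case is long.
\item Your replacement also breaks the rotation at $(i,0)$. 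A short lower edge from $(i,0)$ to an adjacent new vertex is the innermost rightward lower edge there. In the cyclic order it sits immediately before the innermost rightward upper edge. So the rotation at $(i,0)$ is unchanged only if $e$ was the shortest rightward upper edge at $(i,0)$. Your claim that local insertions preserve cyclic orders is therefore false in general.
\end{itemize}

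\textbf{Left-neighbour step.} Here too the moves fail.
\begin{itemize}
\item An RI move attaching a pendant vertex $z$ just left of $(k,0)$ gives $(k,0)$ a left edge in $G_+$. But $z$ then has no left edge in either $G_+$ or $G_-$, so it is a new violation.
\item Subdividing a surplus left edge near its far endpoint $(i,0)$ leaves its long part still ending at $(k,0)$ as an upper-left edge. The multiplicity at $(k,0)$ does not drop.
\item Hence your count of bad vertices need not decrease, and the termination claim is unsupported.
\item The same goes for your claim that later sign repairs reintroduce no defects. A short edge between adjacent vertices supplies a left neighbour in only one of $G_\pm$, while each new vertex needs one in both.
\end{itemize}

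\textbf{What the paper does instead.} It does not re-derive any moves. It quotes Jones' four constructions from \cite[Lemma 5.3.13]{jones_unitary}, which handle:
\begin{itemize}
\item a vertex with no left edge,
\item a vertex with a left edge in only one of $G_\pm$,
\item a vertex with more than one left edge in some $G_\pm$,
\item an edge of sign $-\varepsilon$ in $G_\varepsilon$.
\end{itemize}
Jones shows these constructions terminate in a Thompson graph. The paper then observes that each one is supported in a disc neighbourhood of one or two vertices on the $x$-axis. Such discs can be taken inside the $0$-handle $H^0$ of the screen, disjoint from all virtual crossings. The far portions of long edges, and the rotation system away from those vertices, are never touched, so Jones' planar argument applies verbatim. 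In particular, the rerouting and detour analysis you flag as the main obstacle never arises. You should replace your ad hoc moves with Jones' local sequences plus this locality observation.
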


\begin{proof} This follows exactly as in Jones \cite{jones_unitary}, Lemma 5.3.13. If one of the vertices of $G$ (other than $(0,0)$) has no arrows to the left, this deficiency can be removed by a Type RIIb move (\cite{jones_unitary}, top of page 37). If a vertex of $G$ has only one leftwards arrow, the missing leftwards edge of $G_+$ or $G_-$ can be added with a Type RI and a Type RIIb move (\cite{jones_unitary}, bottom of page 37). If one vertex of $G_{\pm}$ has more than one leftwards arrow, this can be fixed with a Type RIIa move, a Type RI move, and a Type RIIb move (see \cite{jones_unitary}, center of page 38). The last case is that $G_{\varepsilon}$ has an arrow signed $-\varepsilon$. In this case, there is a rather complicated sequence of Type RI, Type RIIa, and Type RIIb moves (see \cite{jones_unitary}, top of page 39) which ultimately changes the sign of the offending edge. In each case, the modifications used occur in a disc neighborhood of one or two vertices, and hence, can be assumed to occur within the $0$-handle $H^0$. As this has no effect on the virtual crossings, the same argument works if $G$ is non-planar.  \end{proof}

The last step in the argument is to express a Thompson virtual Tait graph $G=G_+ \cup G_-$ as a triple consisting of two planar rooted bifurcating trees $T_+,T_-$ and a permutation $\sigma$. Given such a decomposition, \cite{jones_unitary}, Lemma 4.1.4, states that $(T_+,T_-)$ corresponds to an element of $F$. Adding the permutation back in then yields an element of $V$. To show that the vertices in $G_+$ can be permuted to obtain a planar (but still Thompson!) graph, we will use the following notation. Suppose $G$ is Thompson with $N+1$ vertices and let $G_+$ be its upper subgraph. For $\sigma \in S_N$, let $\sigma \cdot G_+$ denote the graph obtained from $G_+$ by permuting the vertices $1,\ldots,N$ according to $\sigma$ and reconnecting the edges. The vertex at $0$ is stationary. Note that as long as the cyclic ordering of the edges at each vertex is preserved and the vertex at $0$ is stationary, the manner in which the edges are reconnected to their vertices is arbitrary since any two ways of doing so are related by detours and planar isotopies. It is helpful to  visualize this as stacking $G_+$ on top of the abstract strand diagram $\beta_{\sigma}$ for $\sigma$, as shown in Figure \ref{fig_stacking_permuation}. To make our conventions match those of the preceding sections at the end of the argument, here the permutation $\sigma$ must be oriented \emph{downwards}. This is emphasized in Figure \ref{fig_stacking_permuation}, where the arrows are oriented from the top of the box to the bottom.

\begin{figure}[htb]
\[
\xymatrix{
\begin{array}{c} \vspace{-1cm} \includegraphics[width=2.3in]{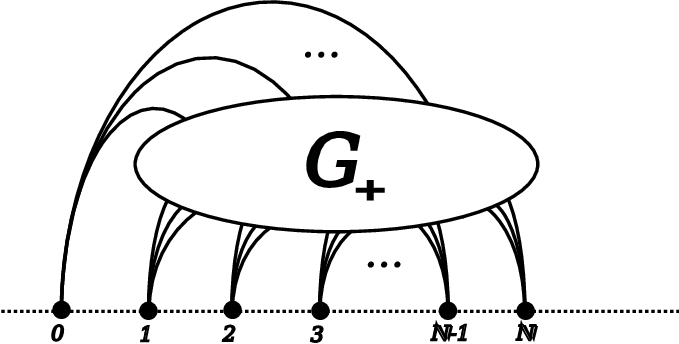}\end{array} \ar[r]^-{\sigma \cdot G_+} & \begin{array}{c} \includegraphics[width=2.3in]{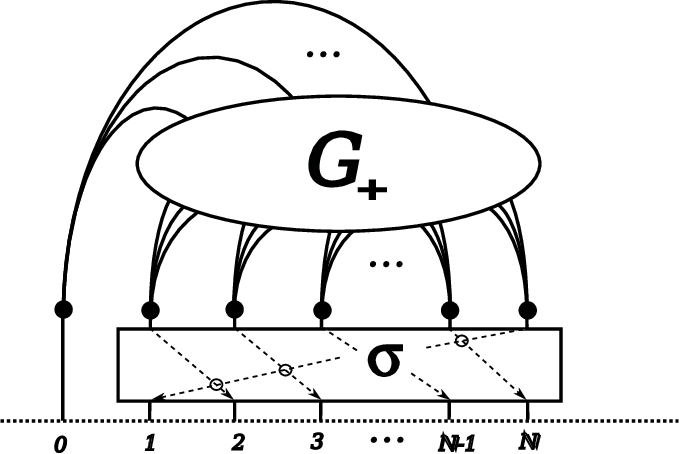}\end{array}} 
\]
\caption{Schematic depiction of the notation $\sigma \cdot G_+$ for $\sigma \in S_N$.} \label{fig_stacking_permuation}
\end{figure}

\begin{lemma} \label{lemma_realize_algo} If $G$ is Thompson, then there is a $\sigma \in S_N$ and a planar Thompson rooted tree $H_+$ such that $G_+=\sigma\cdot H_+$.\end{lemma}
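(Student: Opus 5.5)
The idea is to recognize that $G_+$, as an abstract graph, is already a rooted tree, and then relabel its vertices so that it becomes planar and Thompson.

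\textbf{Step 1: $G_+$ is a tree.} By Definition \ref{def_thompson}, every vertex $j\ge 1$ of $G_+$ has exactly one edge to a vertex $i<j$. Call that vertex $p(j)$, the parent of $j$. Every edge of $G_+$ joins two distinct vertices, so it is the leftward edge of its right endpoint. Hence $G_+$ has exactly $N$ edges and no cycles, since following parents strictly decreases the label. So $G_+$ is a tree on $N+1$ vertices rooted at $0$.

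The cyclic ordering at each vertex $v$ can be read as a linear ordering of the children of $v$. At the root this ordering starts from the upward direction. At a non-root vertex $v$ it starts from the edge to $p(v)$. This makes $G_+$ an abstract plane (ordered) rooted tree.

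\textbf{Step 2: relabel by depth-first order.} Define a new labelling $\tau\colon\{0,\ldots,N\}\to\{0,\ldots,N\}$ by depth-first (preorder) traversal of this plane tree, visiting children in the reverse of their left-to-right ordering as seen in the upper half-plane. This reversal matches Jones' convention that nested upper edges open to the right. Then $\tau(0)=0$.

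Let $H_+$ be the graph on $(0,0),\ldots,(N,0)$ with an upper semicircle from $\tau(p(j))$ to $\tau(j)$ for each $j$, carrying the same sign (positive). In preorder every vertex comes after its parent, so each vertex of $H_+$ other than $0$ has exactly one edge to its left. In preorder the subtree of any vertex also occupies a consecutive interval of labels, so the semicircles are nested or disjoint and never cross. Thus $H_+$ is planar and Thompson, and by \cite{jones_unitary}, Lemma 4.1.4, it comes from a planar rooted bifurcating tree.

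We must also check that the induced cyclic orders in $H_+$ agree with those of $G_+$. All edges at a vertex leave into the upper half-plane, so the cyclic order at each vertex is determined by the left/right order of the endpoints. The traversal order was chosen exactly so that this holds.

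\textbf{Step 3: identify $\sigma$.} Set $\sigma=\tau^{-1}$ restricted to $\{1,\ldots,N\}$, possibly inverted to match the downward convention of Figure \ref{fig_stacking_permuation}. Then $\sigma\cdot H_+$ puts every vertex back at its original position and reconnects each edge to the same endpoints with the same cyclic orders. By the remark preceding the lemma, any two such reconnections differ only by detours and planar isotopies. Hence $\sigma\cdot H_+=G_+$.

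\textbf{Main obstacle.} The delicate step is the compatibility of cyclic orderings in Step 2. The vertex orderings in $G_+$ can be arbitrary because of virtual crossings, whereas in a standard graph all edges at a vertex lie in the upper half-plane. I would first verify that the upper half-plane rotation at each vertex is a linear order: leftward edges first, then rightward edges, ordered by the angle of the semicircle. Then I would show that preorder with the correct child order reproduces exactly this linear order at every vertex.

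If $G_+$ contains no leftward-versus-rightward ambiguity, this is immediate. Otherwise, I would absorb the discrepancy into the choice of child order, since the child order is free.
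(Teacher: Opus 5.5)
Your proof is correct, but it takes a genuinely different route from the paper's.

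\textbf{The paper's route.} The paper never uses the global fact that $G_+$ is a tree. It runs an iterative local algorithm:
\begin{enumerate}
\item Take the leftmost vertex $v$ having a rightward edge with a virtual crossing, and choose edges $e=[v,v']$ and $f=[u,u']$ as specified there.
\item Cyclically shift the block $v',w_1,\dots,w_k,u'$ so that $u'$ jumps in front of $v'$.
\item Check that leftward edges stay leftward and rightward edges stay rightward, so the graph stays Thompson.
\item Argue that the first offending edge moves strictly to the right, so the process terminates.
\end{enumerate}
The permutation $\sigma$ is then the inverse of the composite of these cycles.

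\textbf{Your route.} You observe once and for all that the Thompson condition makes $G_+$ a rooted tree in which every vertex's parent has a smaller label. Standardness makes the upper rotation at each vertex a linear order of its children. The depth-first preorder relabeling is then non-crossing, because subtrees occupy intervals, and Thompson, because parents precede children. You also correctly chose to visit children starting from the arc nearest the positive $x$-direction: in a semicircle drawing, the arc to a nearer child lies below the arc to a farther one.

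\textbf{What each buys.} Your argument produces $\sigma$ in closed form. It replaces the paper's somewhat informal termination claim (``the next bad edge must be further right'') with a short interval argument. It makes compatibility of rotations explicit, which the paper tracks only implicitly through the leftward/rightward bookkeeping. It also shows that $H_+$ is unique: any planar Thompson labeling of a plane rooted tree with prescribed rotations must be its preorder labeling, so the paper's algorithm necessarily ends at your $H_+$. The paper's version, in exchange, is a step-by-step procedure that can be run directly on a picture, as in the worked example of Figure \ref{fig_algo}.

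\textbf{A small correction to your last paragraph.} There is no leftward-versus-rightward ambiguity to absorb, and the child order is not ``free.'' Condition (2) of Definition \ref{def_standard} ($x_e'(t)>0$) forces the parent edge of a vertex $k$ to lie locally in $\{x<k\}$ and every child edge to lie in $\{x>k\}$. So the upper rotation at $k$ is always the children followed by the parent. The order among the children is dictated by $G_+$. What matters is only that preorder can follow whatever that order is, which it always can.
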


\begin{proof} If $G$ has no virtual crossings, then there is nothing to do. Otherwise, let $v$ be the leftmost vertex having a rightwards edge with a virtual crossing. Let $e$ be the rightward pointing edge of $v$ having a virtual crossing whose right hand endpoint $v'$ is closest to $v$. Let $f$ be the edge of $G_+$ having a virtual crossing with $e$ whose right hand endpoint is closest to $v'$. Let $u$ be the left hand endpoint of $f$, where possibly $u=v$. Let $u'$ be the right hand endpoint of $f$. Between $v'$ and $u'$, there are some vertices $w_1,\ldots,w_k$. See Figure \ref{fig_real_proof_labels} for an illustration of this notation. Since $G$ is Thompson, each $w_i$ has exactly one leftwards edge $f_i$.

\begin{figure}[htb]
\includegraphics[width=3in]{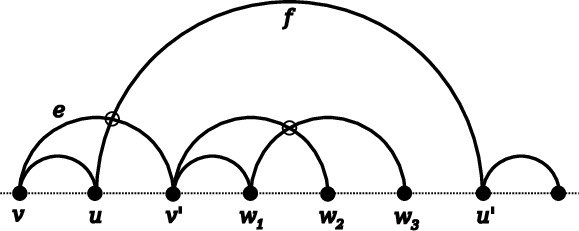}
\caption{Illustration of notation used in the proof of Lemma \ref{lemma_realize_algo}.} \label{fig_real_proof_labels}
\end{figure}
We first claim that $f_i$ must have its left endpoint $u_i$ at $v'$, after $v'$, or at $v$. If $u_i$ occurs between $v$ and $v'$, then $f_i$ must have a virtual crossing with $e$. But $w_i$ is closer to $v'$ than $u'$, so this contradicts our original choice of $f$. If $u_i$ occurs before $v$, then $f_i$ has a virtual crossing with the edge $f$. This contradicts the choice of $v$ as the leftmost vertex having a virtual crossing. This proves the claim.

Now, let $\sigma_1 \in S_N$ be the permutation of the vertices $1,\ldots,N$ that fixes all of them except $v',w_1,\ldots, w_k,u'$. On these vertices, the permutation is given by:
\[
v',w_1,\ldots, w_k,u' \to u',v',w_1,\ldots, w_k.
\]
Next we claim that $\sigma_1\cdot G_+$ is still Thompson. It must be shown that after the permutation, the leftwards pointing edge at each nonzero vertex still points left and that no rightwards pointing edge has turned leftwards. First consider $u'$. Its leftwards edge ends at $u$. The vertex $u$ must lie between $v$ and $v'$, with possibly $u=v$. Since $u \ne w_i$ for any $i$, $u$ still lies to the left of $u'$ after the permutation. Moreover, all of the right hand endpoints of rightwards edges at $u'$ are still to the right of $u'$. Secondly, consider the vertex $v'$.  Its leftwards edge $e$ still ends at $v$ on the left, so that the left edge is preserved. Rightwards edges at $v'$ can end at $w_1,\ldots,w_k$ or at vertices after $u'$. After applying $\sigma_1$, these vertices all remain after $v'$, so that the direction of the edges at $v'$ is preserved by $\sigma_1$. Lastly consider the vertices $w_i$, $1 \le i \le k$. The leftwards edge for $w_i$ has as its endpoint one of $v,v',\ldots, w_{i-1}$. Since $\sigma_1$ does not change the order of $v,v',w_1,\ldots,w_k$, the leftwards edge remains leftwards. Every rightwards edge at $w_i$ has its right hand endpoint at one of $w_{i+1},\ldots, w_k$ or a vertex after $u'$. In particular, this right hand endpoint cannot be $u'$, since this contradicts the fact that $u$ occurs before $v'$ and $u'$ has exactly one edge pointing left. Hence, every rightwards edge at $w_i$ remains rightwards after application of $\sigma_1$. Thus, $\sigma_1 \cdot G_+$ is Thompson, as claimed.  

Now, $\sigma_1 \cdot G_+$ may not have fewer virtual crossings than $G_+$. The number of virtual crossings may have even increased. However, the first problematic edge or vertex must occur further to the right. Indeed, if the vertex $v$ has no more edges with a virtual crossing, then the first problematic vertex is to the right of $v$. Otherwise, $v$ still has a rightwards edge with a virtual crossing. But this edge must have its right hand endpoint further away from $v$ than $v'$ was originally. Indeed, by hypothesis, there were no right hand endpoints of edges of $v$ with virtual crossings between $v$ and $v'$. Since $u'$ was moved to occur before $v'$, the next bad edge must be further right than before. 

Applying the entire process again to $\sigma_1 \cdot G_+$, we obtain a permutation $\sigma_2$ and a new Thompson graph $\sigma_2 \cdot (\sigma_1 \cdot G_+)$. After $s$ iterations, we have the Thompson graph $\sigma_s \cdot (\sigma_{s-1} \cdot \ldots\cdot(\sigma_2 \cdot (\sigma_1 \cdot G_+)))$. Since there are only finitely many vertices, the rightmost vertex has no rightwards edges, and each application of the process moves the next problematic edge or vertex to the right, there must be an integer $s$ such that $\sigma_s \cdot (\sigma_{s-1} \cdot \ldots\cdot(\sigma_2 \cdot (\sigma_1 \cdot G_+)))$ can be drawn as a planar graph. Then setting $H_+=\sigma_s \cdot (\sigma_{s-1} \cdot \ldots\cdot(\sigma_2 \cdot (\sigma_1 \cdot G_+)))$ and $\sigma= (\sigma_s\sigma_{s-1} \cdots \sigma_1)^{-1}$ proves the lemma.
\end{proof}

Now, putting all of the above together, we can finally prove Theorem \ref{thm_intro_real}.

\begin{theorem}\label{thm: realization} Every CC virtual link type $L$ is realized by an element of Thompson's group $V$.
\end{theorem}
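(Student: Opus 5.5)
The proof chains together the lemmas of Section \ref{sec_real}. Let $L$ be a CC virtual link type. By Lemma \ref{lemma_prepare_standard}, $L$ has a virtual Tait graph $G$ that is virtually $2$-equivalent to a standard one. By Lemma \ref{lemma_virtual_thompson}, this standard graph is virtually $2$-equivalent to a Thompson graph, which we continue to call $G$. Virtual $2$-equivalence preserves the virtual link type reconstructed from a virtual Tait graph. It therefore suffices to find a triple $(T_+,T_-,\sigma)$ whose link $\mathscr{L}_V(T_+,T_-,\sigma)$ has, for its standard checkerboard coloring, a virtual Tait graph equal to $G$ up to detours and planar isotopy.

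The first step is to make both halves planar. By Definition \ref{def_standard}, the lower subgraph $G_-$ already has no virtual crossings. It is a planar Thompson graph, so Jones' Lemma 4.1.4 of \cite{jones_unitary} produces a unique planar rooted bifurcating tree $T_-$ with $G_-$ as its Tait graph half. For the upper subgraph, Lemma \ref{lemma_realize_algo} gives $\sigma\in S_N$ and a planar Thompson graph $H_+$ with $G_+=\sigma\cdot H_+$. Applying Jones' lemma to $H_+$ gives a planar tree $T_+$. Both trees have $N$ leaves, since the corresponding graphs have $N+1$ vertices. Hence $(T_+,T_-,\sigma)$, possibly after inverting $\sigma$ to match conventions, is a triple representing an element $v\in V$.

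The second step is to compare Tait graphs. In the construction of Section \ref{sec_thm_intro_real_forward}, the black checkerboard surface of $\mathscr{L}_V(v)$ is built from the tangle surfaces of $T_+$ and $T_-$. These are glued along their leaf bands according to $\sigma$, with virtual band crossings. The black discs correspond to the gaps between consecutive leaves, namely the vertices $0,\dots,N$ together with the outer face. The planar part of the Tait graph is $H_+\cup G_-$, exactly as in Jones' setting. Gluing the leaves through the abstract strand diagram $\beta_\sigma$ moves the endpoints of the $H_+$ edges to the vertices dictated by $\sigma$, which yields $\sigma\cdot H_+=G_+$. As in the remark after Definition \ref{def: abstract} and the discussion before Lemma \ref{lemma_realize_algo}, different ways of drawing the permutation differ only by detours that preserve the cyclic orderings. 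So the virtual Tait graph of $\mathscr{L}_V(v)$ is $G_+\cup G_-=G$, and $\mathscr{L}_V(v)$ is equivalent to $L$.

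The main obstacle is checking precisely that permuting leaves of $T_+$ corresponds to the vertex permutation $\sigma\cdot H_+$ on the Tait graph. In particular, vertex $0$ must stay fixed, and the downward orientation convention for $\beta_\sigma$ must match the strand-diagram convention, which determines whether the permutation is $\sigma$ or $\sigma^{-1}$. I would check this locally: each vertex $i\geq 1$ of the Tait graph is identified with the black region just left of the $i$th leaf, and the twisted bands attached to it follow the leaves under the gluing. A second concern is whether $(T_+,T_-,\sigma)$ is reduced. If it is not, Lemma \ref{lem: cancelingcarets-resultsinunknot} shows that each cancelling caret only changes the link by a split unknot. The cleanest fix is to argue that Jones' correspondence between Thompson graphs and trees already forces the Tait graph to be that of the reduced representative, or else to account for these split unknots, as in Jones' treatment for $F$.
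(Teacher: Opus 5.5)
Your proposal is essentially the paper's proof, step for step. It uses Lemmas \ref{lemma_prepare_standard} and \ref{lemma_virtual_thompson} to reach a Thompson graph, Lemma \ref{lemma_realize_algo} to planarize $G_+$, and Jones' Lemma 4.1.4 on $H_+$ and $G_-$ with $\sigma$ extended to fix vertex $0$. It then observes that reconnecting edges through the permutation changes the diagram only by detours and planar isotopies that preserve cyclic orders. The paper settles your $\sigma$ versus $\sigma^{-1}$ hesitation with a one-line remark that a permutation read downward is the inverse of the same permutation read upward.

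The paper does not discuss the reducedness issue you raise. Your first proposed fix, that Jones' correspondence forces a reduced triple, cannot hold in general. For the two-component unlink, the Type RIIb repair yields vertex $1$ joined to vertex $0$ by one $+$ edge above and one $-$ edge below, which is exactly a cancelling caret, and its reduction realizes only the unknot. A complete argument would therefore have to choose the 2-equivalence moves so that no such configuration appears.
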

\begin{proof} We summarize the argument above. The first step is to construct a virtual Tait graph $G$ for a checkerboard coloring $\xi$ of a representative of $L$ on surface $\Sigma$ of minimal genus. By Lemma \ref{lemma_prepare_standard}, $G$ is virtually 2-equivalent to a standard graph $G'$. By Lemma \ref{lemma_virtual_thompson}, $G'$ is virtually $2$-equivalent to a Thompson graph $G''=G_+'' \cup G_-''$. Let $N+1$ be the number of vertices of $G''$. The graph $G_-''$ has no virtual crossing and hence is planar. By Lemma \ref{lemma_realize_algo}, there is a $\sigma \in S_N$ such that $\sigma \cdot G_+''= H_+$, where $H_+$ is planar. Now, apply \cite{jones_unitary}, Lemma 4.1.4 to $H_+$ and $G_-''$ to obtain planar bifurcating trees $T_+$, $T_-$. Consider $S_{N+1}$ as the permutation group on the symbols $\{0,\ldots,N\}$ and view $\sigma \in S_{N+1}$ as the element which restricts to $\sigma \in S_{N}$ on the vertices $1,\ldots,N$ and fixes $0$. 

Then $\mathscr{L}_V(T_+,T_-,\sigma)$ is a diagram of the virtual link type of $L$. To see this, first note that $H_+=\sigma^{-1} \cdot G_+''$, where the permutation is drawn from top to bottom. The permutation drawn from bottom to top is $(\sigma^{-1})^{-1}=\sigma$, so that the correct permutation is indeed $\sigma$. Secondly, the algorithm in Lemma \ref{lemma_realize_algo} preserves the virtual link type of $G''$. Indeed, the cyclic ordering around each of the vertices in $G_+''$ is preserved at every step, as are the endpoints of each of the edges. Hence, each iteration is related to the previous one by detour moves and planar isotopies.  
\end{proof}

\subsection{Example: the vertical mirror image of 4.105} \label{sec_example} Here we illustrate the realization algorithm using the example begun in Section \ref{sec_virtual_links}. In terms of Green's table \cite{green}, it is the virtual knot obtained from 4.105 by changing all of its classical crossings $\oplus \leftrightarrow \ominus$. This is called the \emph{vertical mirror image} $\overline{K}$ of a virtual knot $K$. All of the classical crossings for the diagram $4.105$ in \cite{green} are negative, so our diagram for $\overline{4.105}$ has only positive classical crossings. A diagram of $\overline{4.105}$ on the torus is given in Figure \ref{fig_check_colored}. Let $\xi$ be the black checkerboard surface in the center of Figure \ref{fig_check_colored}. The virtual spanning surface and Tait graph $G$ are shown in Figures  \ref{fig_disc_band_example} and \ref{fig_virt_tait_graph_example}, respectively. 

The first task is to make $G$ standard, as described in Lemma \ref{lemma_prepare_standard}. Recall that this is done by subdividing the edges so that all the vertices can be placed on the non-negative real axis and so that the interior of each edge lies in either the first or fourth quadrant. One way of doing this is shown in Figure \ref{fig_prepare_virt_tait_4_105}. The graph labeled (i) is obtained from Figure \ref{fig_virt_tait_graph_example} by subdividing two edges. The labels on the vertices describe their positions on the $x$-axis in the subsequent picture. This convention is followed also in the graphs labeled (ii) and (iii). To simplify notation, we will continue to call this graph `$G$' after any modifications. Note that we are using more subdivisions than are strictly necessary to make $G$ standard. This reduces the number of edges signed $-\varepsilon$ in $G_{\varepsilon}$. Badly signed edges greatly increase the number vertices needed to make $G$ Thompson (see Lemma \ref{lemma_virtual_thompson}).

\begin{figure}[htb]
\[
\begin{array}{cc}
\begin{array}{c}
\includegraphics[width=1.3in]{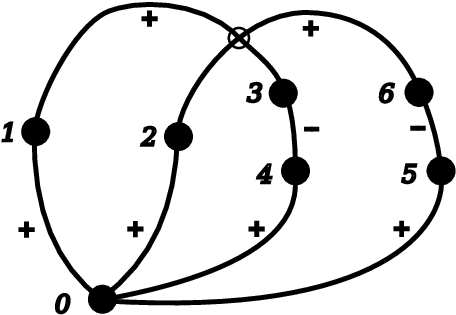}\\ \text{(i)}
\end{array} &\begin{array}{c}
\includegraphics[width=2.8in]{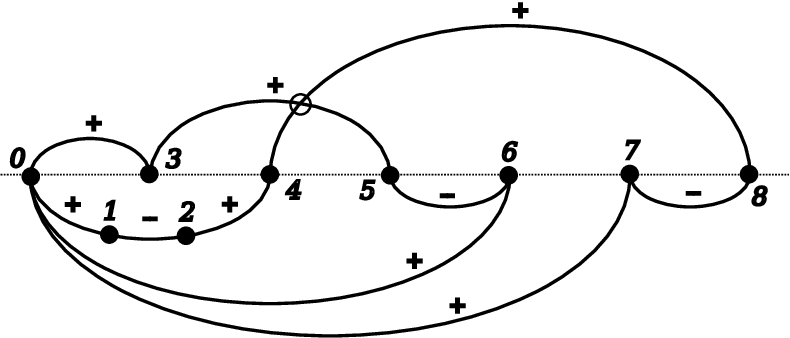} \\ \text{(ii)}
\end{array} \\
\begin{array}{c}
\includegraphics[width=2.8in]{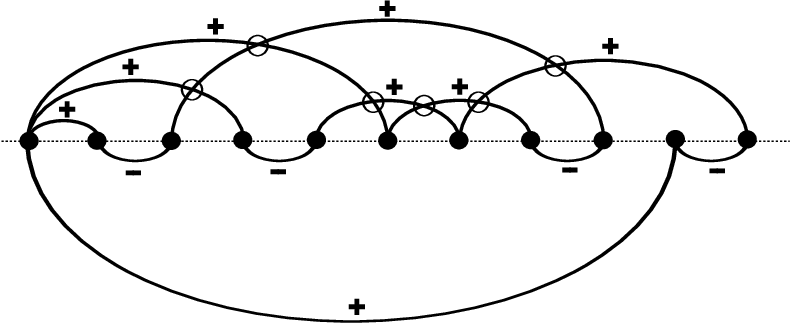} \\ \text{(iv)}
\end{array} & \begin{array}{c}
\includegraphics[width=2.8in]{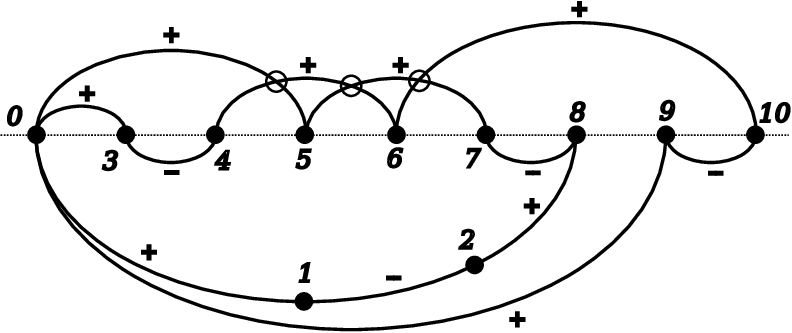} \\ \text{(iii)}
\end{array}
\end{array}
\]
\caption{Standardizing the virtual Tait graph of $\overline{4.105}$.}\label{fig_prepare_virt_tait_4_105}
\end{figure}

Another way to reduce the number of such bad edges is to exploit the non-planarity of virtual Tait graphs. Indeed, since edges represent half-twisted bands, they can be rerouted using detour moves as long as the cyclic ordering of vertices is preserved at each vertex. Since detour moves do not change the virtual link type, this will still produce the desired inverse image in $V$ of our original virtual link. We will employ this trick twice to reach our final standard, but still non-Thompson, graph. The first instance is going from (iii) to (iv) in Figure \ref{fig_prepare_virt_tait_4_105}. The edge between the vertices 2 and 8 is detoured into the first quadrant, and thereby adding three virtual crossings. Secondly, we will detour the edge between the vertices 0 and 9 in Figure \ref{fig_prepare_virt_tait_4_105} (iii). The result is shown in Figure \ref{fig_thompson_graph_4_105}, where we will momentarily ignore the red-colored hollow vertices and their incident edges. Their origin will be explained later. Note that the second detour has removed all $-\varepsilon$ signed edges in $G_{\varepsilon}$.

Now, the black subgraph in Figure \ref{fig_thompson_graph_4_105} is standard but not Thompson. There are no edges signed $-\varepsilon$ in $G_{\varepsilon}$ but there are vertices in which one of $G_{+},G_-$ does not have a leftwards edge. As in Lemma \ref{lemma_virtual_thompson}, this can be fixed using Type RI and Type RIIa moves (see Figure \ref{fig_type_moves}). The added hollow vertices and their incident edges are drawn in red in Figure \ref{fig_thompson_graph_4_105}. The graph $G$ is now Thompson. 

Next, we use Lemma \ref{lemma_realize_algo} to find a permutation $\sigma \in S_{18}$ such that $\sigma \cdot G_+=H_+$ is planar. The steps are shown in Figure \ref{fig_algo}. The algorithm initializes with $G_+$, as shown in Figure \ref{fig_algo} (i). Next, the algorithm cycles the vertices $6 \to 15$, which produces the graph in Figure \ref{fig_algo} (ii). We continue in this fashion, passing the right-hand endpoint of the leftmost non-planar edge over the right-hand endpoint of the nearest edge it crosses, together with all intermediate vertices between the two. Eventually this produces the desired planar graph $H_+$. The desired permutation $\sigma$ is the composition of the cycles. Lastly, we draw the planar bifurcating trees $T_+,T_-$ and the permutation $\sigma$, now viewed as an element of $S_{19}$ which fixes $0$. This is done in Figure \ref{fig_realize_4_105_done}. The graphs $H_+$ and $G_-$ are drawn as dashed semicircles and $\sigma$ is the composition of cycles in Figure \ref{fig_algo}. 

\begin{figure}[htb]
\includegraphics[width=4in]{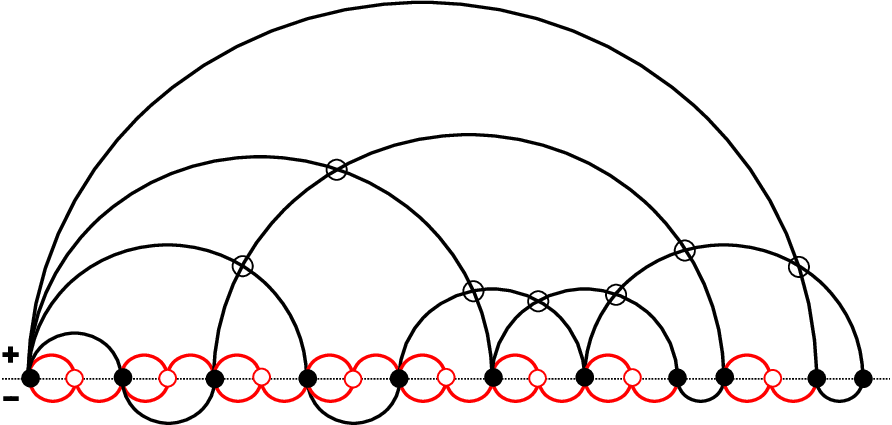}
    \caption{A Thompson graph for $\overline{4.105}$ which is 2-equivalent to the one in Figure \ref{fig_prepare_virt_tait_4_105}.}
    \label{fig_thompson_graph_4_105}
\end{figure}

\begin{figure}
    \[
    \begin{array}{ccc}
    \begin{array}{c}
    \includegraphics[width=1.75in]{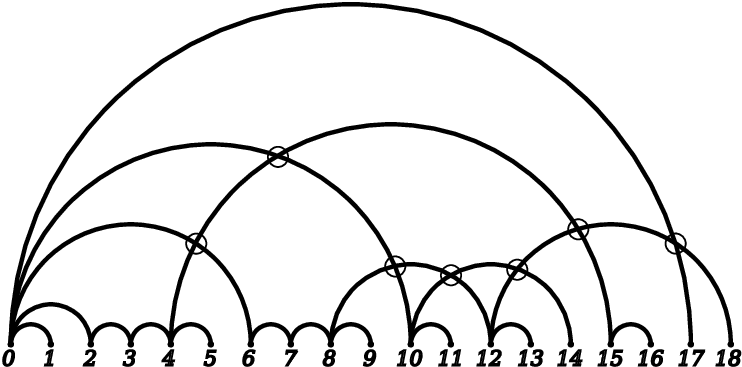} \\
    \text{(i) Cycle: } 6 \to 15
    \end{array} & \begin{array}{c}
    \includegraphics[width=1.75in]{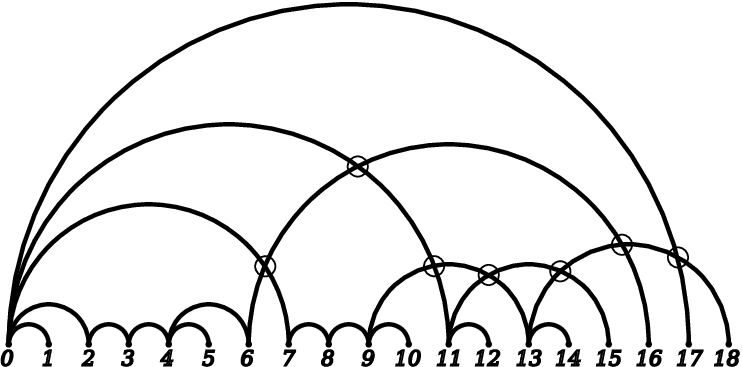}\\
    \text{(ii) Cycle: } 7 \to 16
    \end{array} & \begin{array}{c}
    \includegraphics[width=1.75in]{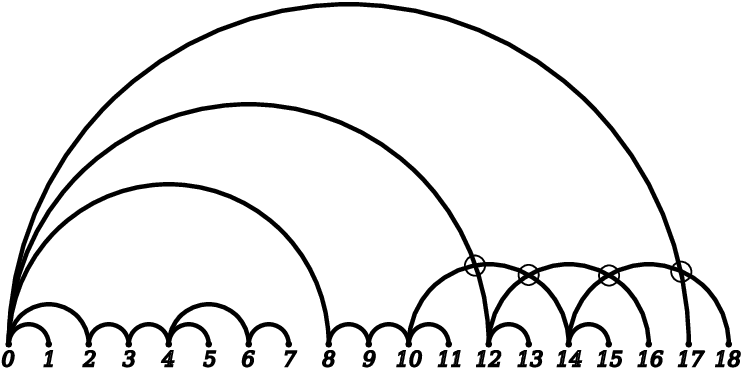}\\
    \text{(iii) Cycle: } 12 \to 14
    \end{array} \\
    \begin{array}{c}
    \includegraphics[width=1.75in]{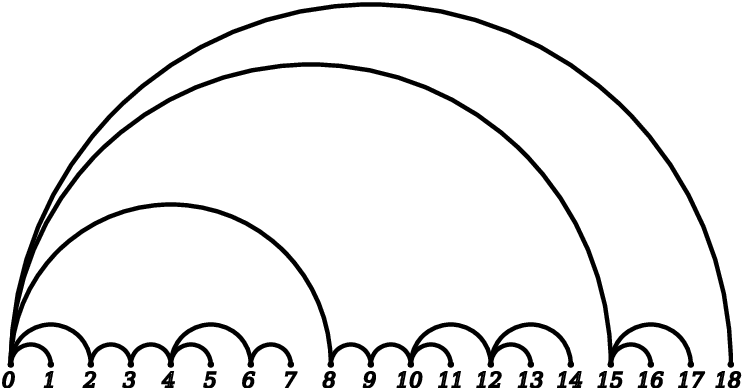}\\
    \text{(vi) Done! } 
    \end{array} & \begin{array}{c}
    \includegraphics[width=1.75in]{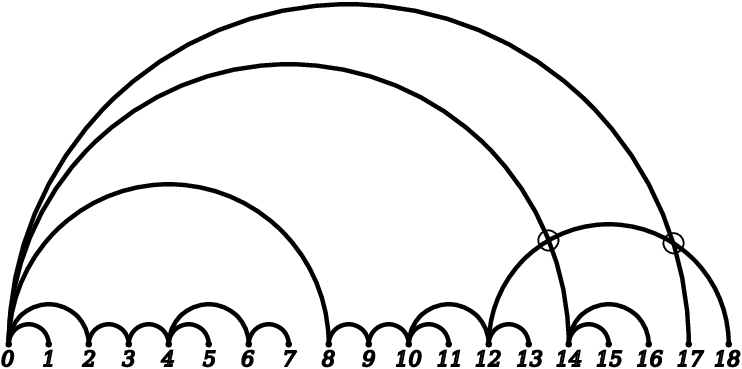}\\
    \text{(v) Cycle: } 14 \to 18
    \end{array} & \begin{array}{c}
    \includegraphics[width=1.75in]{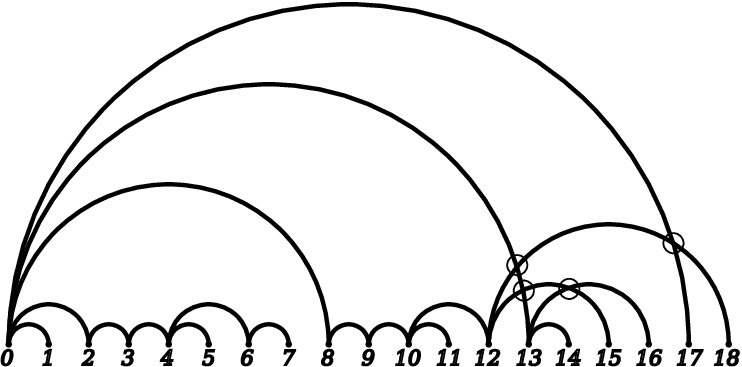}\\
    \text{(iv) Cycle: } 13 \to 15
    \end{array}
    \end{array}
    \]
    \caption{Starting with $G_+$ for $\overline{4.105}$ in Figure \ref{fig_thompson_graph_4_105} in (i), we perform the algorithm given in Lemma \ref{lemma_realize_algo}. The subsequent pictures (ii)-(vi) cycles the vertices as listed until a planar rooted tree $H_+$ is obtained.}
    \label{fig_algo}
\end{figure}
\begin{figure}[htb]
\includegraphics[width=4.5in]{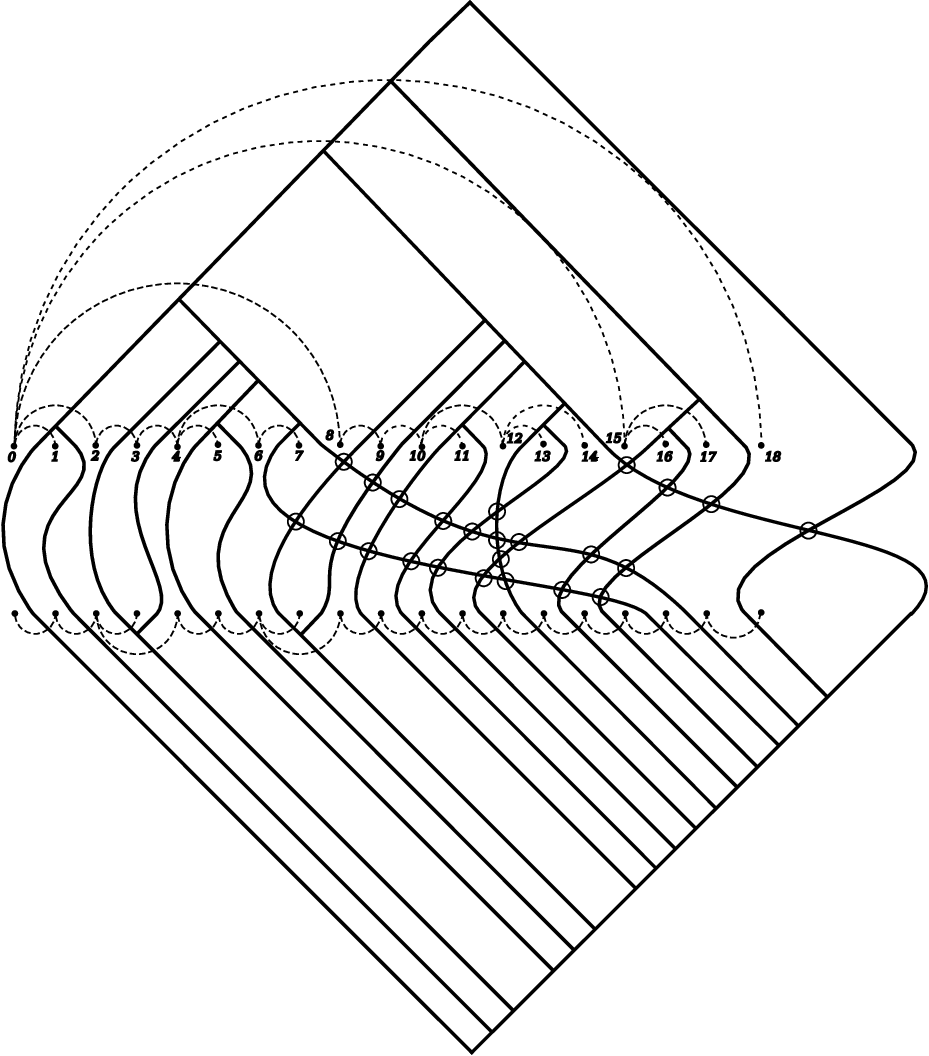}
    \caption{Realization of $\overline{4.105}$ by an element of $V$.}
    \label{fig_realize_4_105_done}
\end{figure}

\subsection{Example Comparison with $\mathit{VF}$} \label{sec_vf}
On the other hand, $\overline{4.105}$ can also be realized from an element of $\mathit{VF}$, following the procedure outlined in \cite{KTVF}. The group $\mathit{VF}$ is defined as a certain diagram group \cite[Definition 2.7]{KTVF}. Given $\Delta \in \mathit{VF}$, \cite{KTVF} describe how to build a medial graph $\Gamma(\Delta)$ from which a virtual link $L(\Gamma(\Delta))$ can be obtained. Before we define the medial graph for a virtual link, we caution the reader that although medial and Tait graphs are the same in the classical setting, they differ in the virtual setting.

Every virtual link diagram, even if it does not depict a CC virtual link, has a medial graph $\Gamma(D)$ such that $L(\Gamma(D))$ returns $D$. To construct $\Gamma(D)$, consider the shadow of $D$, which is the planar graph obtained from $D$ by removing the classical and virtual crossing data. The shadow can then be checkerboard shaded on the plane, as in the classical case, see Figure \ref{fig: VF tait graph}. Note that this checkerboard-shaded diagram does not always yield a checkerboard surface as described in Section \ref{sec_virt_tait_graphs}, because bands may have virtual half-twists. From this shaded diagram, we follow the usual convention to build a medial graph of a classical link diagram, except that edges originating from virtual crossings are given a label of $v$ rather than $+$ or $-$. This is where the medial graph differs from the virtual Tait graph--virtual Tait graphs encode virtual crossings as non-planar intersections of edges, whereas medial graphs encode virtual crossings as entirely new edges with new labels. In fact, $\Gamma(D)$ is always planar whereas its virtual Tait graph (if it exists) may not be.

Recall that when Jones constructed links from $F$, he associated a Tait graph (which is the same as a medial graph in the case of classical links) to a pair of planar, rooted, binary trees. Similarly, \cite{KTVF} show that one can also obtain a medial graph from a \textit{decorated pair of trees}, and then prove that all such medial graphs arise as $\Gamma(\Delta)$ for some $\Delta \in \mathit{VF}$ \cite[Lemmas 3.2 and 4.2]{KTVF}.

\begin{definition}
    A \textit{decorated pair of trees} $(T_+,T_-)$ is an ordered pair of planar, rooted, binary trees with the same number of leaves, in which some edges of both trees may be labeled with a $v$. We refer to these edges as ``decorated," and other edges as ``undecorated." From a decorated pair of trees one can create a medial graph as in \cite[Definition 4.1.2]{jones_unitary}, except that the edges of $\Gamma$ originating from decorated edges of $(T_+,T_-)$ are labeled with $v$ rather than $+$ or $-$.
\end{definition}

Kodama and Takano \cite{KTVF} prove that all virtual links arise from elements of $\mathit{VF}$ by showing that every virtual link $L$ admits a diagram $D$ whose medial graph $\Gamma(D)$ arises from a decorated pair of trees. To find such a pair of trees, one puts $\Gamma$ into standard position (as in Definition \ref{def_standard}, except condition (3) is irrelevant because the graph was planar to begin with.) The authors of \cite{KTVF} show that in order for a signed graph to have originated from an element of $\mathit{VF}$, it must satisfy all the conditions of Definition \ref{def_standard}, plus one more: two edges connecting the same pair of vertices must either both represent classical crossings (labeled $\pm$) or virtual crossings (labeled $v$). It cannot be the case that one edge has a $v$ and the other has a $\pm$. To correct this issue when it arises, the authors of \cite{KTVF} provide an additional step when, combined with the usual process of \cite{jones_unitary}, leads to a medial graph $\Gamma'$  which is $2$-equivalent to $\Gamma$ and originates from a decorated pair of trees. Figure \ref{fig: VF trees} shows $\Gamma,\Gamma'$, and the corresponding decorated pair of trees for $\overline{4.105}.$ 

    \begin{figure}[h!]\includegraphics[scale=0.5]{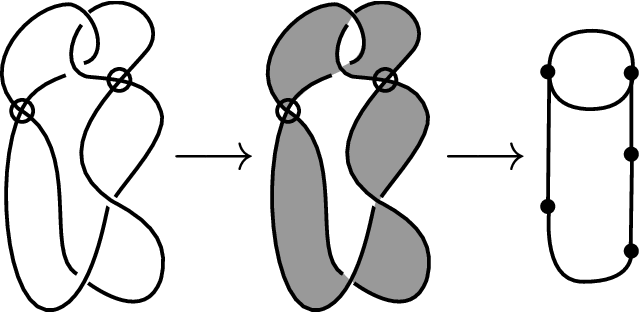}
    \put(-25,0){$+$}
    \put(-30,45){$v$}
    \put(-15, 68){$-$}
    \put(-15, 50){$-$}
    \put(0, 45){$+$}
    \put(0,20){$v$}
    \caption{The labeled graph associated to $\overline{4.105}$. Note the surface in the middle is not a checkerboard surface, and the top two crossings, despite being positive, inherit negative signs when this surface is translated into a decorated graph.}\label{fig: VF tait graph}\end{figure}

\begin{figure}\includegraphics[scale=0.6]{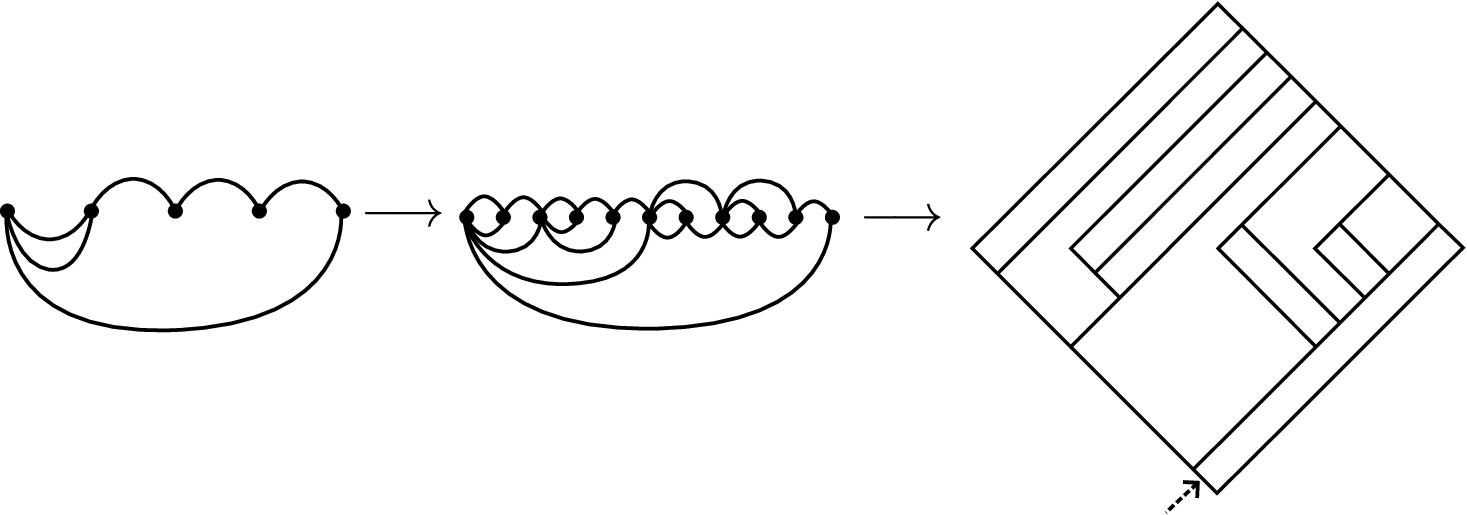}
\put(-380, 40){$v$}
\put(-368,100){$v$}
\put(-230, 100){$v$}
\put(-240,40){$v$}
\put(-95,-5){$v$}
%\put(-38, 90){$v$}
\put(-50, 90){$v$}
\caption{Applying the procedure of \cite{KTVF} to the labeled graph from Figure \ref{fig: VF tait graph} to obtain an element of $\mathit{VF}$ corresponding to $\overline{4.105}$. In this figure, all edges with maxima (resp. minima) that are \textit{not} labeled with a $v$ are understood to be positively (resp. negatively) signed.}\label{fig: VF trees}\end{figure}

\section{The Oriented Subgroup
$\vec{V}$}\label{sec: oriented} In Section \ref{sec_definition_vec_V}, we define the oriented subgroup $\vec{V}$, which contains Jones' oriented groups $\vec{F}\subset F$ and $\vec{T} \subset T$.  The proof of Theorem \ref{thm_intro_oriented}, that all AC links are in the image $\mathscr{L}_V(\vec{V})$, is in Section \ref{sec_realize_AC}. 
\subsection{Definition of $\vec{V}$} \label{sec_definition_vec_V} The first step of creating virtual links from elements of $V$ involved, for each of the trees in $(T_+,T_-, \sigma)$, building a planar tangle and checkerboard surface. Each tree's associated surface is always orientable, even though the checkerboard surface for $(T_+,T_-,\sigma)$ may not be after they are glued together according to $\sigma$. Following the convention that the leftmost band of a tree's associated surface is positively oriented, each tree induces a unique orientation on its associated surface. We may encode this orientation as a sequence of signs corresponding to the signs of the bands from left to right. These sequences, called $n$-signs, were introduced in \cite{jones_unitary} for the study of $\vec{F}$ and later used in \cite{aiellocontijones} to study unitary representations and the HOMFLYPT polynomial.

\begin{figure}
\small
    \includegraphics[scale=0.58]{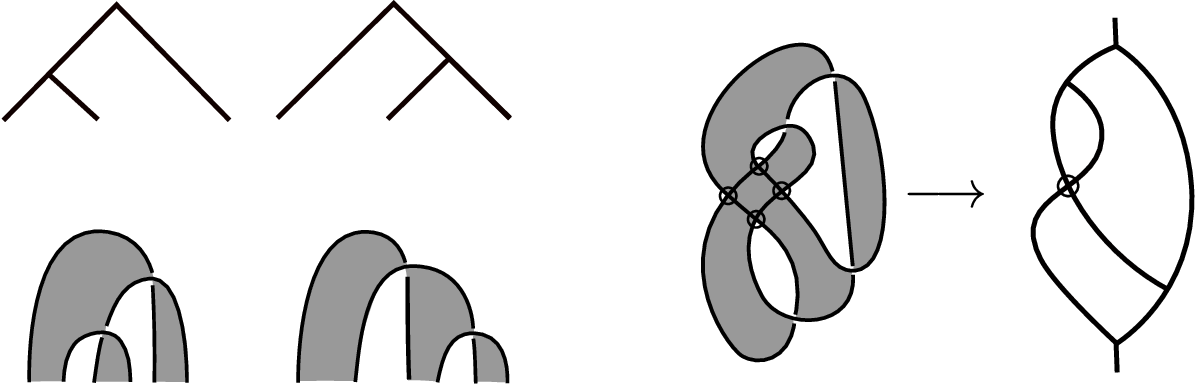}
    %permutation on left tree
    \put(-260, 65){$2$}
    \put(-228,65){$1$}
    \put(-195,65){$3$}
    %n-signs
    \put(-350,-10){$n(T_+)=(+,-,-);$}
    \put(-256,-10){$n(T_-)=(+,-,+)$}
    %signs on left semilink
    \put(-315, 25){$+$}
    \put(-306.5,4){\small $-$}
    \put(-290.5,11){\small $-$}
    %signs on right semilink
    \put(-242,24){$+$}
    \put(-215,17){\small $-$}
    \put(-201,4){\small $+$}
    %signs on shaded surface
    \put(-125,77){$+$}
    \put(-134,26){$-$}
    \put(-116.5,61){\small $-$}
    \put(-114,37){$+$}
    \put(-97, 57){$-$}
    %signs on strand diagram
    \tiny
    \put(-49,70){$+$}
    \put(-24,70){$-$}
    \put(-3,70){$-$}
    \put(-53,38){$-$}
    \put(-34,38){$+$}
    \put(-2,38){$-$}
    \normalsize
    \caption{ An element $v \in \vec{V}$, the $n$-signs associated to its trees, and the associated abstract strand diagram with leaves labeled. Note the signs of the second tree are switched upon gluing the bands because $n(T_{-})=-\sigma(n(T_+)).$}\label{fig: subgroup}
\end{figure}

\begin{definition}
    Let $S_{n}$ refer to the symmetric group on $n$ letters. Given an $n$-sign $\nu$ and a permutation $\sigma \in S_{n}$, define $\sigma(\nu)$ to be the image of the $n$-sign $\nu$ under the permutation $\sigma$. Let $-\nu$ refer to the $n$-sign resulting from switching all of the signs in $\nu$. 
\end{definition}

\begin{proposition}
    The triple $(T_+,T_-,\sigma)$ induces an orientable checkerboard surface if and only if $n(T_-))=\pm \sigma(n(T_+)).$
\end{proposition}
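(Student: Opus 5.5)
The plan is to reduce orientability of the glued checkerboard surface to a combinatorial consistency condition on the connecting bands. Let $M_+$ and $M_-$ be the checkerboard surfaces of the tangles built from $T_+$ and $T_-$, and let $M$ be the surface obtained by attaching the leaf bands according to $\sigma$.

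First, $M_\pm$ is connected and planar, hence orientable. Since it is connected, it has exactly two orientations: the one fixed by the convention that the leftmost band is positive, and its negative. Under the convention, $n(T_\pm)$ records, for each leaf band, whether the orientation of $M_\pm$ agrees with a fixed reference orientation of the plane at that band. The reverse orientation has $n$-sign $-n(T_\pm)$.

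The surface $M$ is $M_+ \cup M_-$, glued along $n$ connecting bands. Each connecting band is untwisted; virtual band crossings do not twist it. The $i$-th leaf band of $M_+$ is joined to the $\sigma(i)$-th leaf band of $M_-$. Since $M_+$ and $M_-$ are connected, $M$ is orientable if and only if we can choose orientations $o_+$ of $M_+$ and $o_-$ of $M_-$ that agree across every connecting band.

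The key local computation is the compatibility condition across a single connecting band. In the abstract strand picture, $T_-$ is reflected over the horizontal axis before gluing. Taking this reflection into account, the orientations agree across the band joining leaf $i$ to leaf $\sigma(i)$ exactly when the sign of $o_-$ at $\sigma(i)$ is the negative of the sign of $o_+$ at $i$ (or the same sign, depending on the reflection convention). Figure~\ref{fig: subgroup} shows the sign switch, which is why the relation there reads $n(T_-)=-\sigma(n(T_+))$. This step is the main obstacle: the sign depends on how the reflected bottom tree's plane orientation compares with the top one, and it must be checked carefully on a single untwisted band between the two tangles. I would do this by drawing one band with its normal vector and tracking the normal through the gluing.

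Granting the local rule, suppose $o_+ = \epsilon_+\, n(T_+)$ and $o_- = \epsilon_-\, n(T_-)$ with $\epsilon_\pm \in \{\pm 1\}$. Then compatibility across all $n$ bands simultaneously says exactly that $n(T_-) = \pm\,\sigma(n(T_+))$, where the global sign $\pm$ absorbs the choices $\epsilon_+\epsilon_-$. Here $\sigma(\nu)$ denotes the $n$-sign whose $\sigma(i)$-th entry is the $i$-th entry of $\nu$.

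Conversely, if $n(T_-) = \pm\,\sigma(n(T_+))$, choose $\epsilon_-$ to match the sign. The resulting orientations agree on every band, so they glue to an orientation of $M$. If the equality fails for both signs, then for either choice of $o_-$ some band joins oppositely oriented pieces. Such a band produces an orientation-reversing loop through $M_+$, the band, $M_-$, and a correctly glued band back, so $M$ is non-orientable. Both directions therefore follow once the local band rule is established.
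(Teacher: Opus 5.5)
Your proposal is correct and follows essentially the paper's argument: each tree surface is connected and orientable with exactly two orientations, and the connecting bands are untwisted, so the glued surface is orientable precisely when the signs carried across by $\sigma$ agree with $n(T_-)$ up to a single global sign, which is the paper's two cases according to whether the leftmost band of the bottom tree receives $+$ or $-$. The step you flag as the main obstacle is harmless, since any effect of reflecting $T_-$ is a sign flip that is the same on every band and is absorbed by the $\pm$; also, the minus sign in Figure~\ref{fig: subgroup} comes from choosing the reversed orientation on the bottom surface as in Remark~\ref{rem: signs}, not from a general sign-reversing rule across bands.
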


\begin{proof} Suppose $v=(T_+,T_-,\sigma) \in V$ is such that the diagram $\mathscr{L}(v)$ gives an orientable checkerboard surface. The sequence of signs on bands before the permutation is $n(T_+)$. The sequence of signs on bands after the permutation is $\sigma(n(T_+))$. If the leftmost band coming from the bottom tree is labeled with a $+$, then $\sigma(n(T_+))=n(T_-)$. If the leftmost band coming from the bottom tree is labeled with a $-$, then $\sigma(n(T_+))=-n(T_-)$.

    On the other hand, consider a triple $(T_+,T_-,\sigma)$ with $n(T_-)=\pm \sigma(n(T_+)).$ If $n(T_-)=\sigma(n(T_+))$, then $\sigma$  respects the orientation of the bands associated to the two trees, so they can be glued together to form an oriented surface. If $n(T_-)=-\sigma(n(T_+)),$ then reversing the orientation of $T_-$ and gluing it to $T_+$ via $\sigma$ results in an oriented surface.
\end{proof}

\begin{definition}\label{def: or}
    Let $\vec{V}$ refer to the subset of reduced triples $(T_+,T_-,\sigma) \in V$ satisfying \[\sigma(n(T_+))=\pm n(T_-).\]
\end{definition}

\begin{remark}\label{rem: signs}Given an element $v=(T_+,T_-,\sigma)\in \vec{V}$, one can use the compatible $n$-signs induced by the trees to label the associated abstract strand diagram, as in Figure \ref{fig: subgroup}. If $\sigma(n(T_+))=-n(T_-)$, then the leftmost leaf of $T_-$ will have a $-$ sign next to it in this diagram, rather than a $+$.\end{remark}

\begin{proposition}
    $\vec{V}$ is a subgroup of $V$. 
\end{proposition}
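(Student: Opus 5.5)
We need to show three things: $\vec{V}$ contains the identity, is closed under inverses, and is closed under the composition $*$ (stacking and reducing). The plan is to pass from the algebraic condition $\sigma(n(T_+))=\pm n(T_-)$ to a purely diagrammatic one. By the preceding proposition, $v\in\vec{V}$ exactly when the abstract strand diagram of $v$ admits a consistent sign labeling as in Remark \ref{rem: signs}. Such a labeling assigns $\pm$ to every edge so that, at each split (resp. merge), the two lower (resp. upper) edges carry opposite signs. The sign on the left lower edge must equal the sign on the upper edge at a split, with the mirror rule at a merge. This is the local rule for $n$-signs of a tree. Up to a global sign flip, a labeling is determined by the sign of the root edge. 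The statement to prove then becomes: \emph{a $(1,1)$-abstract strand diagram admits a consistent labeling if and only if its reduced form does}.

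\textbf{Identity and inverses.} The identity is the trivial strand diagram, which is trivially labeled. The inverse $(T_-,T_+,\sigma^{-1})$ satisfies $\sigma^{-1}(n(T_-))=\pm n(T_+)$ by applying $\sigma^{-1}$ to the defining relation. Equivalently, turning the labeled diagram upside down gives a labeled diagram.

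\textbf{Invariance of labelings under the moves.}
\begin{itemize}
\item \emph{Type I (caret cancellation).} A split immediately followed by a merge of its two outputs, with the rotation system preserved, has lower edges labeled $s,-s$. The merge rule then forces its output to be $s$, so the caret can be removed or inserted while keeping the labeling consistent.
\item \emph{Type II (merge followed by split).} A merge whose output is immediately split has, say, inputs $s,-s$, middle edge $s$, and outputs $s,-s$. The move identifies inputs with outputs in order, so the labels match.
\item \emph{Rotation-system-preserving graph isomorphisms.} These carry labelings to labelings, since the local rules refer only to the cyclic order at each vertex.
\end{itemize}
One subtlety must be checked: the left/right distinction at a vertex must be read off from the rotation system together with the edge directions, not from a planar embedding. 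This holds because each vertex has one distinguished in-edge or out-edge, and the cyclic order then determines which of the other two edges is left.

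\textbf{Closure under composition.} Given labeled diagrams $D_f$ and $D_g$, stack $D_g$ below $D_f$. The root edge of $D_g$ and the sink edge of $D_f$ may carry different signs, but flipping all signs of $D_g$ fixes this, so $D_g\circ D_f$ is consistently labeled. By the invariance above, every diagram in the reduction sequence is labeled, including the reduced triple $(V_+,V_-,\tau)$ obtained through Lemmas \ref{lem: trees to forests}, \ref{lem: slide up} and \ref{lem: slide down}. The labeling restricted to $V_+$ is $\pm n(V_+)$, since within a tree the local rule propagates from the root, and likewise on $V_-$. Compatibility across $D(\tau)$ then yields $\tau(n(V_+))=\pm n(V_-)$, so $gf\in\vec{V}$.

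\textbf{Main obstacle.} The delicate step is making the correspondence between the algebraic $n$-sign condition and the labeling precise. In particular, the sign convention at merges (the reflected tree $T_-$) must match the way the leftmost band of $T_-$ is normalized to $+$, and the local rules must be stated using the rotation system rather than a drawing. I would first verify them on the example of Figure \ref{fig: subgroup}, and then check that the Type I and Type II moves are compatible with exactly those rules.
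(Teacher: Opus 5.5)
Your proposal is correct and follows essentially the same route as the paper: label the abstract strand diagrams with compatible signs, flip the signs of $g$ if needed before stacking, and note that Type I and Type II reductions and rotation-system-preserving isomorphisms preserve this sign compatibility, so the reduced triple for $gf$ satisfies $\tau(n(V_+))=\pm n(V_-)$. The only difference is that you state the local rule explicitly and check Type I and Type II by hand; the paper instead cites the compatibility of reductions with oriented strand diagrams. Your rule (the left child inherits its parent's sign, the right child gets the opposite sign) does reproduce Jones' $n$-signs, so the convention you flagged as the main obstacle is consistent.
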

\begin{proof} Given $(T_+,T_-,\sigma) \in \vec{V}$, its inverse $(T_-,T_+,\sigma^{-1})$ is also in $\vec{V}$. The identity element, also in $\vec{V}$, has two trees with only one leaf and $\sigma=\text{id}$.

It remains to show $\vec{V}$ is closed. Consider $f,g\in \vec{V}$, $f=(T_+,T_-,\sigma),g=(S_+,S_-,\nu),$ and the abstract strand diagrams representing each element. We will show, using the diagrammatic composition procedure described earlier in Figure \ref{fig: comp in V}, that $g \circ f$ is given by a triple satisfying the condition in Definition \ref{def: or}.

In this process will work with signed versions of each diagram (as in Remark \ref{rem: signs}). By construction, the abstract strand diagrams for $g$ and $f$ have a top tree whose first leaf has a $+$ to its left, but the leftmost leaf of each bottom tree could have either a $+$ or a $-$. In order to stack $g$ below $f$ so that the signs are coherent, we may need to switch the orientation on $g$. 

We now keep track of what happens to the signs at each step of the diagrammatic composition process.
In Step (2) of this process, we reduce a tree stacked below an inverse tree, to an inverse forest stacked below a forest. Locally, we are working with oriented strand diagrams in the sense of \cite[Definition 4.1]{KLL}, and reduction preserves the orientability \cite[Definition 4.4]{KLL}. After performing step (3), the diagram consists of a pair of trees labeled with their $n$-signs (or, in the case of the bottom tree, perhaps the negative of its $n$-sign), and a permutation $\nu' \circ \sigma'$ that is compatible with the signs on the trees. The canceling carets of step (4) preserve orientation because Type I moves on oriented strand diagrams are compatible with orientations. Therefore $\vec{V}$ is closed and indeed is a subgroup.
\end{proof}

\subsection{Realizing AC virtual links} \label{sec_realize_AC} In \cite{aiello_oriented}, Aiello proved that every oriented classical link can be realized by some element of the oriented subgroup $\vec{F}$. In this subsection, we prove that every almost classical link can be realized by an element of $\vec{V}$. The proof is similar to the unoriented case given in Section \ref{sec_real}. Recall from Section \ref{sec_virt_tait_graphs} that every AC link type has an oriented virtual Tait graph, where both the edges and vertices carry a sign. See Figure \ref{fig_4_105_oriented_Tait_graph}. The main difference between the CC and AC realization algorithms is that the moves shown Figures \ref{fig_type_moves} and \ref{fig_subdivide_edge} are now oriented. In particular, any two adjacent vertices in an oriented virtual Tait graph must have opposite sign. This is due to the fact that an edge represents a half-twisted band and hence, the orientation must be opposite on each side of the twist. We now proceed with the proof of Theorem \ref{thm_intro_oriented}.   
\begin{figure}[htb]
\[
\xymatrix{\begin{array}{c}\includegraphics[width=1.4in]{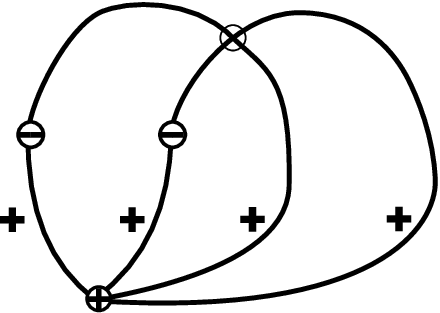}\end{array} \ar[r] & \begin{array}{c} \includegraphics[width=3.5in]{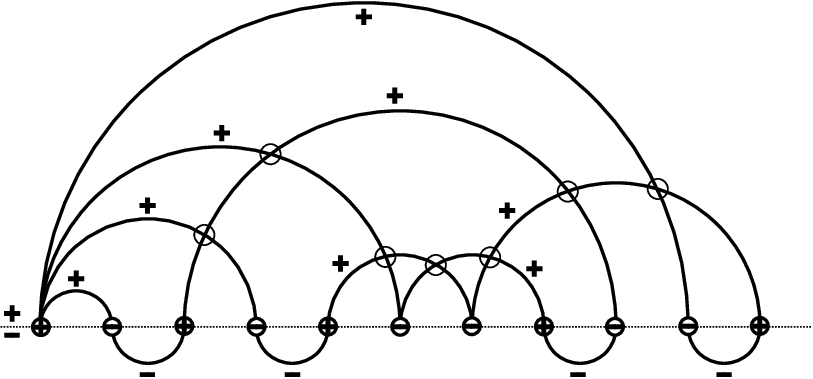}\end{array}}
\]
\caption{An oriented virtual Tait graph for $\overline{4.105}$. It is obtained from the oriented surface in Figure \ref{fig_check_colored}, center, by the virtual $2$-equivalence shown in Figure \ref{fig_prepare_virt_tait_4_105}.} \label{fig_4_105_oriented_Tait_graph}
\end{figure}

\begin{theorem}\label{thm: oriented realization} Every almost classical link type is realized by an element of $\vec{V}$.
\end{theorem}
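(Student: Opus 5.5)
We follow the unoriented argument of Theorem \ref{thm: realization}, but track vertex signs throughout, in the spirit of Aiello's proof for $\vec{F}$ \cite{aiello_oriented}.

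\emph{Step 1: an oriented virtual Tait graph.} Let $L$ be an oriented AC virtual link type. By \cite{boden_chrisman_karimi}, Proposition 5.8, $L$ has a minimal genus representative $D$ on a connected surface $\Sigma$ whose black checkerboard surface $M_\xi$ is orientable. We choose the orientation of $M_\xi$ whose boundary orientation is the given orientation of $L$. Since $M_\xi$ is oriented, adjacent vertices of the virtual Tait graph $G$ carry opposite signs. Choosing $\infty$ in a white cell, we transfer $G$ to the screen exactly as in Lemma \ref{lemma_prepare_standard}.

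\emph{Step 2: oriented standard and Thompson forms.} We rerun Lemma \ref{lemma_prepare_standard} and Lemma \ref{lemma_virtual_thompson} using only oriented versions of the moves in Figures \ref{fig_type_moves} and \ref{fig_subdivide_edge}. An oriented move is one in which each new vertex receives the sign forced by its neighbours, so that the surface stays oriented and the induced boundary orientation is unchanged.
\begin{itemize}
\item A Type RIIa subdivision inserts two vertices along an edge. These get the signs opposite and equal to the left endpoint, so the alternation is preserved.
\item Type RI adds a leaf vertex of the opposite sign.
\item Type RIIb is a local move that adds a bigon.
\end{itemize}
Each such move is a Reidemeister move on the oriented boundary, so the oriented link type is unchanged. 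Detours and planar isotopies do not touch the signs. The one delicate point is Jones' sequence that flips a badly signed edge in $G_\varepsilon$. Aiello handles exactly this in the planar oriented setting, and as in Lemma \ref{lemma_virtual_thompson} it takes place inside a disc in $H^0$. We therefore import Aiello's oriented version verbatim, or we avoid bad edges by detouring them into the other half-plane as in the $\overline{4.105}$ example.

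The main obstacle is checking that every step of Jones' Thompson-ization has an orientation-compatible version. Aiello's work is what should guarantee this.

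\emph{Step 3: producing the triple.} Apply Lemma \ref{lemma_realize_algo} to get $\sigma$ with $\sigma^{-1}\cdot G_+'' = H_+$ planar. Then apply \cite{jones_unitary}, Lemma 4.1.4, to obtain $T_+$ and $T_-$. As in Theorem \ref{thm: realization}, $\mathscr{L}_V(T_+,T_-,\sigma)$ is a diagram of $L$ whose checkerboard surface is the oriented surface $M_\xi$.

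By the proposition characterizing orientable checkerboard surfaces, $n(T_-)=\pm\sigma(n(T_+))$. Here the $n$-signs are read off from the signs of the vertices, or bands, attached at the leaves. Hence $(T_+,T_-,\sigma)\in\vec{V}$.

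We need the triple to be reduced. If it is not, cancelling carets changes neither membership in $\vec{V}$ nor, by Lemma \ref{lem: cancelingcarets-resultsinunknot}, the link beyond split unknots. The Thompson graph built from $L$ contains no such unknot components, so the reduced triple gives the same link.

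\emph{Step 4: matching the orientation.} Finally we check that the orientation induced on $\mathscr{L}_V$ agrees with that of $L$. The convention that the leftmost band is positive fixes the orientation of the surface up to a global sign. If the sign disagrees, we reverse the orientation of $M_\xi$ in Step 1 before running the algorithm, which swaps all vertex signs. Since every oriented move is sign-symmetric, the same sequence of moves works, and the resulting induced orientation is the given one.
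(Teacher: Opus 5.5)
Steps 1--3 follow the paper's route: an oriented virtual Tait graph, Aiello's oriented moves confined to $H^0$, Lemma \ref{lemma_realize_algo}, then Jones' Lemma 4.1.4. Deducing $(T_+,T_-,\sigma)\in\vec V$ from the orientability criterion is fine. The gap is in Step 4, and it affects the orientation of the output.

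The orientation that $\mathscr{L}_V$ assigns to an element of $\vec V$ depends only on the triple. It is the boundary orientation of the checkerboard surface oriented so that the leftmost disc, the vertex at $(0,0)$, is positive. That convention fixes the surface orientation completely, not ``up to a global sign''.

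Reversing the orientation of $M_\xi$ only relabels every vertex sign. The unoriented graph, the unoriented moves (you say yourself that the same sequence is used), the permutation from Lemma \ref{lemma_realize_algo}, and hence $(T_+,T_-,\sigma)$ are all unchanged. So the canonical orientation of the output is unchanged as well. Moreover, reversing the connected surface $M_\xi$ reverses $\partial M_\xi$. After the flip the surface induces $-L$, not $L$, which contradicts your Step 1, where $L$ already forced the orientation of $M_\xi$. Consequently, if the vertex that lands at $(0,0)$ is negative for the orientation inducing $L$, your algorithm realizes the reverse $-L$. Almost classical links need not be invertible, so this is a real failure.

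The missing idea is to control which vertex sits at the origin, which the paper does during standardization. In Lemma \ref{lemma_prepare_standard} the vertices are strung along a simple closed curve and straightened onto the $x$-axis. You may cut that curve at a vertex signed $+$. Such a vertex exists because adjacent vertices of an oriented Tait graph have opposite signs, and outside the classical case (handled by Aiello) $G$ has at least two vertices.

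You then need this vertex to stay at $(0,0)$ through the Thompson-ization and the permutation step; the permutations of Lemma \ref{lemma_realize_algo} fix $0$. With the leftmost vertex of the final Thompson graph signed $+$, the canonical orientation of $\mathscr{L}_V(T_+,T_-,\sigma)$ agrees with the orientation of $L$ induced by the oriented surface. That is exactly how the paper concludes.
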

\begin{proof} Let $L$ be an oriented almost classical link type. Then $L$ can be realized by a homologically trivial link $\mathcal{L}$ in a thickened surface $\Sigma \times [0,1]$, where $\Sigma$ is an oriented surface of minimal genus. As discussed in Section \ref{sec_virtual_links}, it may be assumed that $\Sigma$ is connected. By \cite{boden_chrisman_karimi}, Proposition 5.8, there is a diagram of $D$ of $\mathcal{L}$ on $\Sigma$ admitting a checkerboard surface $M$ which is connected and oriented. In particular, the orientation of $M$ induces the orientation of the link. If $\xi$ is the checkerboard coloring, we will assume that $M$ is the black surface. Let $\Gamma_{\xi}$ denote the oriented Tait graph for $M$ on $\Sigma$. Let $\Sigma_{\infty}$ be the compact surface obtained from deleting a small neighborhood of a point $\infty$ in one of white cells of $\xi$. For some handle decomposition of $\Sigma_{\infty}$, let $S \subset \mathbb{R}^2$ be the screen. Let $G$ be the oriented virtual Tait graph obtained from transferring $\Gamma_{\xi}$ to $S$. 

If $\Sigma=S^2$, so that $L$ is a classical link, the result follows as in \cite{aiello_oriented}. Otherwise we may assume $\Sigma$ has genus at least one. As in Section \ref{sec_real}, use planar isotopies, detour moves, and subdivisions of edges, to turn $G$ into a standard graph $G'=G'_+ \cup G'_- $(see Lemma \ref{lemma_prepare_standard}). Since the genus of $\Sigma$ is at least one, $L$ is not the unknot. Then since $M$ is oriented, $G$ must have at least two vertices and, in particular, at least one vertex that is signed $+$. Thus, we may assume that the leftmost vertex of $G'$ is signed $+$.  A standard graph for the oriented AC knot  $\overline{4.105}$ is shown in Figure \ref{fig_4_105_oriented_Tait_graph}. 

Next, it must be shown that $G'$ is virtually $2$-equivalent by a sequence of oriented Type RI, RIIa, and RIIb moves to an oriented Thompson graph. For classical oriented links, a sequence of such moves was given by Aiello in \cite{aiello_oriented}. These moves all occur in a small neighborhood of the $x$-axis. Hence, in the non-planar case, these may all be applied in the $0$-handle of the screen $S$, so that they do not involve any of the virtual crossings. This implies that $G'$ is virtually $2$-equivalent to an oriented Thompson virtual Tait graph $G''$. Suppose $G''$ has $N+1$ vertices. Since $G''$ is Thompson, we can apply Lemma \ref{lemma_realize_algo} to find a permutation $\sigma \in S_N$ such that $\sigma \cdot G''_+=H_+$ and $H_+$ is planar. Next, as in the proof of Theorem \ref{thm: realization}, apply \cite{jones_unitary}, Lemma 4.1.4, to the planar rooted trees  $(H_+,G''_-)$ to obtain a pair of planar rooted bifurcating trees $(T_+,T_-)$. Consider the element $(T_+,T_-,\sigma) \in V$ where $\sigma$ is now viewed as an element of $S_{N+1}$ that fixes the element $0$. By construction, $T_+$ and $T_-$ have the same $n$-signs. Hence, $(T_+,T_-,\sigma) \in \vec{V}$. Since the leftmost vertex of $G''$ is signed $+$, the orientation of the link induced by the Seifert surface agrees with the canonical orientation of $\mathscr{L}_V(T_+,T_-,\sigma)$, as defined in Section \ref{sec_definition_vec_V}. This completes the proof.  
\end{proof}

\section{Positive Definite Functions via operator quandle colorings}\label{sec: op colorings}

 The goal of this section is to show that operator quandle coloring invariants define positive definite functions on $\vec{V}$ and $V$. Section 5.1 reviews quandles, operator quandles, and their coloring invariants on virtual links. Section 5.2 discusses positive definite functions and proves Theorem \ref{intro-thm-operator-quandle}.

\subsection{The operator quandle coloring invariant}
We first give an overview of quandles and the operator quandle coloring invariant, starting with some preliminary definitions.

A \textit{quandle} $(Q,*)$ is a set $Q$ with a binary operation, $*$, satisfying the following axioms:
\begin{enumerate}
    \item for every $x\in Q$, $x*x=x$,
    \item for every $x \in Q$, the right translation map $y \mapsto y*x$ is invertible, and
    \item for all $x,y,z\in Q$, $(x*y)*z=(x*z)*(y*z)$.
\end{enumerate}
 Sets with this algebraic structure were independently discovered by many \cite{Tak43, Joy79, Mat82, Bri88}; however, the term was coined in Joyce's PhD thesis \cite{Joy79}. A \textit{kei} (or \textit{involutive quandle}) is a quandle $(Q,*)$ such that for all $x\in Q$, the right translation map is an involution, i.e., for all $x, y \in Q$, $(y*x)*x=y$. A quandle or kei is \textit{finite} whenever the underlying set is finite.
    
    Let $R_x(y)=y*x$ be the right translation map on a quandle $(Q,*)$. Then the \textit{right division} on $(Q,*)$ is defined to be $x/y:= R_x^{-1}(y)$.  Notice that when $(Q,*)$ is a kei, $x/y=x*y$ for all $x,y$. An \textit{endomorphism} of a quandle $(Q,*)$ is a mapping $f:Q\to Q$ such that for all $x,y\in Q$, $f(x*y)=f(x)*f(y)$. A bijective endomorphism on $(Q,*)$ is an \textit{automorphism}.
    
    In \cite{KMV25}, the authors introduce the following structure in order to define invariants of virtual links. (However, the idea of this structure goes back to Kauffman and Manturov \cite{KM05}.) An \textit{operator quandle} $(Q,*,\alpha)$ is a triple where $(Q,*)$ is a quandle and $\alpha$ is an automorphism of $(Q,*)$. When $|Q|$ is finite, we call $(Q,*,\alpha)$  a \textit{finite operator quandle}.

    \begin{remark}
        In \cite{KMV25}, the authors define operator quandles more generally to obtain a new invariant of multi-virtual links. In this case, an operator quandle is a triple $(Q,*,A)$, where $A$ is a list of pairwise commuting automorphisms of $(Q,*)$. As this paper is only concerned with virtual links, we just choose a single automorphism of the quandle for our definition.
    \end{remark}

Before defining operator quandle colorings and coloring invariants, we first establish some terminology. For the remainder of the paper, an \textit{arc} of a virtual link diagram $D$ is a portion of a strand of $D$ between an under-crossing or virtual crossing and the next under-crossing or virtual crossing. 
    
\begin{definition}
    Let $D$ be a virtual link diagram. 
    \begin{enumerate}
        \item Suppose $D$ is oriented. Let $(Q,*,\alpha)$ be an operator quandle. A \textit{$(Q,*,\alpha)$-coloring} of $D$ is an assignment of elements of $Q$ to the arcs of $D$ satisfying the rules at each crossing shown in Figure \ref{fig: operator_quandle_colorings_crossings}.
        \item Suppose $D$ is unoriented. If $(Q,*)$ is a kei, then a $(Q,*,\text{id})$\textit{-coloring} of $D$ is an assignment of elements of $Q$ to the arcs of $D$ satisfying the unoriented rules in Figure \ref{fig: operator_quandle_colorings_crossings} where $\alpha=\text{id}$. 
    \end{enumerate}
\end{definition}

\begin{figure}[h!]
    \centering
    \operatorquandlecoloringpositive,  \operatorquandlecoloringnegative, \operatorquandlecoloringvirtual
    \caption{Operator quandle colorings at each type of crossing on a virtual link diagram.}
    \label{fig: operator_quandle_colorings_crossings}
\end{figure}

 Let $D$ be an oriented virtual link diagram and $(Q,*,\alpha)$ be an operator quandle.  Denote the number of $(Q,*,\alpha)$-colorings of $D$ as $\text{Col}(D, (Q, *, \alpha))$. Similarly, if $D$ is an unoriented virtual link diagram and $(Q,*)$ a kei, denote $\text{Col}(D,(Q,*,\text{id}))$ as the number of $(Q,*,\text{id})$-colorings of $D$. The number $\text{Col}(D, (Q, *, \alpha))$ (respectively, $\text{Col}(D,(Q,*,\text{id}))$) was shown to be an invariant of oriented (respectively, unoriented) virtual links in \cite{KM05} (Section 2.4).

\subsection{Positive definite functions on $V$ and $\vec{V}$} \label{sec_proof_of_C}
Here we define positive definite functions on $V$ and $\vec{V}$ from operator quandle colorings. The subsection concludes with the proof of Theorem \ref{intro-thm-operator-quandle}. 

Recall that different triples $(T_+,T_-,\sigma)$ representing the same element of $V$ differ by the addition of canceling carets. Next we establish how operator quandle colorings behave under these additions. 

\begin{lemma}\label{lem: numberofopqcoloringswithcarets}
    Consider a triple $(T_+, T_-, \sigma)$ of two trees with $n$ leaves and an element $\sigma \in S_n$ representing an element $v \in \vec{V}$. Let $(S_+, S_-, \tau)$ be the new triple of trees with $n+1$ leaves and $\tau \in S_{n+1}$ obtained by introducing a canceling caret. Then
    \begin{align*}
        \text{Col}(\mathscr{L}_{V}(S_+,S_-,\tau), (Q, *,\alpha))=|Q|\text{Col}(\mathscr{L}_{V}(T_+,T_-,\sigma), (Q,*,\alpha)),
    \end{align*}
    for any finite operator quandle $(Q,*,\alpha)$.
\end{lemma}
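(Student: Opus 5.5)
The plan is to reduce the statement to Lemma \ref{lem: cancelingcarets-resultsinunknot} and then use the multiplicativity of coloring counts under disjoint union. By that lemma, $\mathscr{L}_{V}(S_+,S_-,\tau)$ is equivalent, via detour moves and one Reidemeister 2 move, to the split union of $\mathscr{L}_{V}(T_+,T_-,\sigma)$ and an unknot $U$. Since $\text{Col}(\,\cdot\,,(Q,*,\alpha))$ is an invariant of oriented virtual links \cite{KM05}, it suffices to prove two things. First, the coloring count of a split diagram $D \sqcup U$ equals $\text{Col}(D,(Q,*,\alpha))\cdot \text{Col}(U,(Q,*,\alpha))$. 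Second, the crossingless unknot has exactly $|Q|$ colorings.

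For the first point, I would observe that a diagram that is disjoint in the plane has no crossings, classical or virtual, between $D$ and $U$. The coloring conditions of Figure \ref{fig: operator_quandle_colorings_crossings} are imposed crossing by crossing, so they decouple, and colorings of $D\sqcup U$ are exactly pairs of colorings. For the second point, the crossingless circle has a single arc and no conditions, so any of the $|Q|$ elements may be assigned to it.

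The main obstacle is orientation, since the invariant is defined for oriented diagrams and $v\in\vec{V}$. I must check two things here. First, the triple $(S_+,S_-,\tau)$ still lies in $\vec{V}$, which holds because adding a canceling caret is a Type I move compatible with orientations, as in the proof that $\vec{V}$ is a subgroup. Second, the orientation induced on $\mathscr{L}_V(S_+,S_-,\tau)$ by the checkerboard surface restricts, on the components other than $U$, to the canonical orientation of $\mathscr{L}_V(T_+,T_-,\sigma)$. I would argue this by noting that the leftmost band and its $+$ sign are unaffected by inserting the caret. The orientation of $U$ itself is irrelevant, because either orientation of a crossingless circle yields $|Q|$ colorings.

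One further point must be handled. The moves in Lemma \ref{lem: cancelingcarets-resultsinunknot} include detours, which pass arcs of $D$ through virtual crossings with $U$, and it is not immediate that such moves preserve the count. I would invoke the invariance of $\text{Col}$ under detour moves, which is part of \cite{KM05}. Alternatively, a direct check works: the virtual crossing rule applies $\alpha^{\pm1}$ to the color of the crossing arc, and $\alpha$ is a bijection, so colorings correspond bijectively before and after the move. Combining these facts gives the factor $|Q|$.
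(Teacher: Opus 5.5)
Your proposal is correct and follows the paper's argument. The paper's proof is the one-line reduction to Lemma \ref{lem: cancelingcarets-resultsinunknot} plus the observation that the unknot has $|Q|$ colorings; your treatment of multiplicativity under split union and of orientations fills in details the paper leaves implicit. One caveat: your ``direct check'' alternative is incomplete, because the detours in that lemma push arcs across the classical crossings of $U$ (mixed moves), and invariance there needs $\alpha(x*y)=\alpha(x)*\alpha(y)$, not merely bijectivity of $\alpha$. Your primary appeal to the invariance of $\text{Col}$ from \cite{KM05} already covers this.
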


\begin{proof}
    The result follows from Lemma \ref{lem: cancelingcarets-resultsinunknot}, since the unknot can be colored in $|Q|$  ways. \qedhere
\end{proof}

\begin{definition}\label{def: oq-positive-type}
    Fix a finite operator quandle $(Q,*,\alpha)$. The $(Q,*,\alpha)$-\textit{coloring function} on $\vec{V}$, $\text{Col}(\cdot, (Q,*,\alpha)):\vec{V}\to \mathbb{R}_{\geq 0}$, is defined as
    \begin{align*}
        \text{Col}(v, (Q,*,\alpha)) := |Q|^{-n}\text{Col}(\mathscr{L}_{V}(T_+, T_-,\sigma), (Q,*,\alpha)),
    \end{align*}
    where $v \in \vec{V}$ is represented by the triple $(T_+, T_-,\sigma)$, $T_{\pm}$ have $n$ leaves, and $\sigma\in S_n$. 
\end{definition}

\begin{remark}\label{rmk: well-defined-positive-function}
    By Lemma \ref{lem: numberofopqcoloringswithcarets}, the addition of the normalization factor of $|Q|^{-n}$ in Definition \ref{def: oq-positive-type} makes the $(Q,*,\alpha)$-coloring function well-defined up to canceling carets. 
\end{remark}

We now rewrite the coloring invariant as a partition function on the underlying 4-valent graph of the virtual link diagram. This formulation of the function is better suited to proving it is positive definite because, as we will see, the coloring count will be decomposed into separate contributions from the upper and lower parts of the virtual link. Fix a finite operator quandle $(Q,*,\alpha)$. Let $D$ be an oriented virtual link diagram. Let $c_+$, $c_-$, and $c_{\text{o}}$ be a classical positive, classical negative, and virtual crossing, respectively. We view $D$ as a 4-valent graph, $\Gamma$, where the vertices are the classical and virtual crossings of $D$. Consider any vertex that was a classical crossing, $c_+$ or $c_-$, in the virtual link diagram. Denote $a_{\text{in}}^{\text{over}}, a_{\text{out}}^{\text{over}}$ to be the two edges that were the over-strand of the crossing and $b_{\text{in}}^{\text{under}}, b_{\text{out}}^{\text{under}}$ to be the two edges that were the under-stands of the crossing. Now, for a virtual crossing $c_{\text{o}}$ of $D$, set $a_{\text{in}}, a_{\text{out}}, b_{\text{in}}, b_{\text{out}}$ to be as in Figure \ref{fig: virtual crossings for partition function}.

\begin{figure}[h!]
    \centering
    \operatorquandlevirtualgraph
    \caption{Labels for the four strands of a virtual crossing for a graph $\Gamma$.}
      \label{fig: virtual crossings for partition function}
\end{figure}

For a positive crossing, $c_+$, define:
\begin{align*}
    \omega_{c_+}(a_{\text{in}}^\text{over}, b_{\text{in}}^\text{under}, a_{\text{out}}^\text{over},b_{\text{out}}^\text{under}):= \begin{cases}
        1 &\text{ if } a_\text{in}^\text{over}=a_\text{out}^\text{over}, b_\text{out}^\text{under}=b_\text{in}^\text{under}*a_\text{in}^\text{over},\\
        0 &\text{ otherwise.}
    \end{cases}
\end{align*}
For a negative crossing, $c_-$, set:
\begin{align*}
    \omega_{c_-}(a_{\text{in}}^\text{over}, b_{\text{in}}^\text{under}, a_{\text{out}}^\text{over},b_{\text{out}}^\text{under}):=\begin{cases}
        1 &\text{ if } a_{\text{in}}^\text{over}=a_\text{out}^\text{over}, b_\text{out}^\text{under}=b_{\text{in}}^\text{under}/a_\text{in}^\text{over}, \\
        0 &\text{ otherwise.}
    \end{cases}
\end{align*}
Lastly for a virtual crossing, $c_{\text{o}}$, define:
\begin{align*}
    \omega_{c_\text{o}}(a_\text{in}, b_\text{in}, a_\text{out}, b_\text{out}):=\begin{cases}
        1&\text{ if } a_{\text{out}}=\alpha(a_\text{in}), b_\text{out}=\alpha^{-1}(b_\text{in}),\\
        0&\text{ otherwise.}
    \end{cases}
\end{align*}

Consider the partition function on $\Gamma$ defined by:

\begin{align*}
    Z_\Gamma(Q,*,\alpha):= \sum_{\substack{\tau\\\tau: E(\Gamma)\to Q}}\left(\prod_{\substack{c \in \mathcal{C}_+\\ c\in \mathcal{C}_-} }\omega_{c} (a_{\text{in}}^\text{over}, b_{\text{in}}^\text{under}, a_{\text{out}}^\text{over},b_{\text{out}}^\text{under})\prod_{c_\text{o}\in \mathcal{C}_\text{o}}\omega_{c_\text{o}}(a_\text{in}, b_\text{in}, a_\text{out}, b_\text{out})\right),
\end{align*}

\noindent where $\mathcal{C}_+,\mathcal{C}_-,\mathcal{C}_\text{o}$ are the sets of positive, negative, and virtual crossings of $D$ respectively, and the summation is over all functions $\tau$ on the set $E(\Gamma)$ of edges of $\Gamma$. 

\begin{remark}\label{rem: opq_colorings_is_Z}
    For $v \in \vec{V}$, the corresponding oriented virtual link diagram $\mathscr{L}_V(T_+,T_-,\sigma)$ satisfies: 
    \begin{align*}
        \text{Col}(\mathscr{L}_{V}(T_+,T_-,\sigma),(Q,*,\alpha))=Z_\Gamma(Q,*,\alpha),
    \end{align*}
    where $\Gamma$ is the 4-valent graph obtained from $\mathscr{L}_{V}(T_+,T_-,\sigma)$. 
\end{remark}

\begin{definition}
    Any virtual link diagram $\mathscr{L}_{V}(T_+, T_-,\sigma)$ built from an element $v \in V$ can be decomposed into a top and bottom half denoted $\mathscr{L}(T_+,\sigma)$ and $\mathscr{L}(T_-, \sigma)$ respectively, where we choose all the virtual crossings to lie entirely in the top half. We call these halves \textit{virtual semi-links}, see Figure \ref{fig:virtual_semilinks}. The strands of $\mathscr{L}_{V}(T_\pm,\sigma)$ outside of the box form the \textit{middle} of the two virtual semi-link diagrams when glued together.
\end{definition}

\begin{remark}
    By definition, the middle of the two virtual semi-link diagrams when glued together has no crossings. It will be $2n$ straight strands, where $n$ is the number of leaves in $T_{\pm}$.
\end{remark}

\begin{figure}[h!]
    \centering
    $ \mathscr{L}(T_+,\sigma)=\virtualsemilinkdown, \mathscr{L}(T_-,\sigma)=\virtualsemilinkup, \mathscr{L}_V(T_+, T_-, \sigma)=\multipliedvirtualsemilinks$
    \caption{Virtual semi-links and their multiplication to become the entire link $\mathscr{L}_V(T_+, T_-, \sigma).$}
    \label{fig:virtual_semilinks}
\end{figure}

Before proving that the $(Q,*,\alpha)$-coloring function on $\vec{V}$ is positive definite, we recall a few definitions. A continuous function on  group $G$, $\phi:G \to \mathbb{C}$, is said to be \textit{positive definite} (or \textit{of positive type}) if for any $r$ and $g_1,...,g_r\in G$, the matrix $(\phi(g_ig_j^{-1}))_{1\leq i,j \leq r}$ is positive semidefinite. Since Thompson's groups $F\subset T \subset V$ are all discrete, any function from one of Thompson's groups to $\mathbb{C}$ is continuous. We use the following lemma to show that a matrix is positive semidefinite.

\begin{lemma}[Adapted from \cite{Bha07}, p. 2-3]
    A matrix $A\in M_r(\mathbb{C})$ is positive semidefinite if and only if there exists an $r$-dimensional Hilbert space $H$ and vectors $\xi_1,...,\xi_r\in H$ such that
    \begin{align*}
        (A_{ij})_{1\leq i,j \leq r}=(\langle \xi_i,\xi_j\rangle)_{1\leq i,j \leq r},
    \end{align*}
    where $\langle \cdot, \cdot\rangle$ denotes the inner product on $H$.
\end{lemma}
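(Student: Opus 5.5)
The plan is to prove the two implications separately, treating the statement as a standard fact about Gram matrices in finite dimensions.

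For the ``if'' direction, suppose $A_{ij}=\langle \xi_i,\xi_j\rangle$ for vectors $\xi_1,\dots,\xi_r$ in a Hilbert space $H$. First, $A$ is Hermitian, since $\overline{A_{ji}}=\overline{\langle \xi_j,\xi_i\rangle}=\langle \xi_i,\xi_j\rangle=A_{ij}$. Next, for any $c=(c_1,\dots,c_r)\in\mathbb{C}^r$ we expand
\[
\sum_{i,j}\overline{c_i}\,A_{ij}\,c_j=\Big\langle \sum_j c_j\xi_j,\sum_i c_i\xi_i\Big\rangle \text{ or its conjugate,}
\]
which of the two depending on which slot of the inner product is linear. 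Either way it equals $\|\sum_i c_i\xi_i\|^2\ge 0$ (choosing $c$ or $\bar c$ to match the convention), so $A$ is positive semidefinite. No dimension hypothesis on $H$ is needed here.

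For the ``only if'' direction, let $A$ be positive semidefinite. Apply the spectral theorem: $A=U\,\mathrm{diag}(\lambda_1,\dots,\lambda_r)\,U^*$ with $U$ unitary and every $\lambda_k\ge 0$. Define the square root $B=U\,\mathrm{diag}(\sqrt{\lambda_k})\,U^*$, so that $B\ge 0$, $B=B^*$, and $A=B^*B$. Take $H=\mathbb{C}^r$ with the standard inner product, which is $r$-dimensional as the statement requires, and let $\xi_i$ be the $i$-th column of $B$. Then $\langle \xi_i,\xi_j\rangle$ is, depending on convention, either $(B^*B)_{ji}$ or $(B^*B)_{ij}$. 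In the first case, take the $\xi_i$ to be the columns of $\overline{B}$ instead, or equivalently use $A^{T}=\overline{A}$, which is also positive semidefinite. This yields $A_{ij}=\langle\xi_i,\xi_j\rangle$.

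The only real care is the sesquilinearity convention. To handle it, fix at the outset that $\langle\cdot,\cdot\rangle$ is linear in the first variable, so $\langle x,y\rangle=y^*x$. Then $\langle\xi_i,\xi_j\rangle=\xi_j^*\xi_i=(B^*B)_{ji}=A_{ji}=\overline{A_{ij}}$. Replacing $B$ by $\overline{B}$, whose product $\overline{B}^{\,*}\overline{B}=\overline{B^*B}=\overline{A}$, gives $\langle\xi_i,\xi_j\rangle=\overline{A}_{ji}=A_{ij}$, using that $A$ is Hermitian. Everything else is routine, and this is the form in which the lemma will be applied: to write $(\phi(g_ig_j^{-1}))$ as a Gram matrix built from the virtual semi-links.
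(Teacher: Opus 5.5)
Your proof is correct and is the standard argument behind the paper's citation of Bhatia. A positive semidefinite $A$ factors as $B^*B$ with $B=A^{1/2}$, which makes $A$ the Gram matrix of the columns of $B$ in $\mathbb{C}^r$; conversely, the quadratic form of any Gram matrix is a squared norm. The only blemish is cosmetic: under the convention linear in the first slot, $\sum_{i,j}\overline{c_i}A_{ij}c_j=\|\sum_i\overline{c_i}\,\xi_i\|^2$, which in general is neither $\langle\sum_j c_j\xi_j,\sum_i c_i\xi_i\rangle$ nor its conjugate as your display states, but your parenthetical about replacing $c$ by $\bar c$ already fixes this.
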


    From a positive definite function $\phi$ on a discrete group $G$, the Gelfand-Naimark-Segal (GNS) construction produces a unitary representation  $\pi_\phi: G \to \mathcal{U}(K)$, where $K$ is a Hilbert subspace of the complex group algebra $\mathbb{C}[G]$ whose inner product is built from $\phi$, and $\mathcal{U}(K)$ is the group of unitary operators on $K$. See, for example, \cite{EW2025} Proposition 1.72. Conversely, positive definite functions always arise as diagonal matrix coefficients of unitary representations (\cite{EW2025}, Proposition 1.71).

Finally, we can prove part (2) of Theorem \ref{intro-thm-operator-quandle}. The proof is similar to \cite{AC19}, Theorem 6.3\footnote{There appears to be a misprint in their proof: the Hilbert space $H$ should be $\mathbb{C}^{|Q|^{2n}}$, not $\mathbb{C}^{3^{2n}}$.}.

\begin{theorem}\label{thm: function_of_positive_type_oriented}
    The function $\text{Col}(\cdot,(Q,*,
\alpha)):\vec{V}\to \mathbb{R}_{\geq 0}$, where $(Q,*,\alpha)$ is any finite operator quandle is positive definite on $\vec{V}$.
\end{theorem}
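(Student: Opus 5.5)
We follow the strategy of Aiello--Conti \cite{AC19}, Theorem 6.3, and realize the matrix $(\text{Col}(g_ig_j^{-1},(Q,*,\alpha)))_{i,j}$ as a Gram matrix. Given $g_1,\dots,g_r\in\vec{V}$, we first bring them to a common top tree. Adding canceling carets does not change the function (Remark \ref{rmk: well-defined-positive-function}). So we may write each $g_i=(T_+^{(i)},T_-,\sigma_i)$ with the same bottom tree $T_-$, with $n$ leaves, after passing to a common refinement of the bottom trees and permuting. Then $g_j^{-1}=(T_-,T_+^{(j)},\sigma_j^{-1})$. The stacked product $g_ig_j^{-1}$ is represented, possibly unreduced, by the triple $(T_+^{(j)},T_+^{(i)},\sigma_i\sigma_j^{-1})$ after the middle $T_-\circ T_-^{-1}$ cancels, which is a sequence of Type I cancellations. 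Since the normalized coloring function is invariant under canceling carets, we may compute it on this unreduced triple with the factor $|Q|^{-n}$.

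Next, the diagram $\mathscr{L}_V(T_+^{(j)},T_+^{(i)},\sigma_i\sigma_j^{-1})$ is glued from two virtual semi-links. Using detour moves, we may place all virtual crossings in the halves: the permutation $\sigma_i\sigma_j^{-1}$ is drawn as $\sigma_j^{-1}$ followed by $\sigma_i$. The diagram is then the top semi-link $\mathscr{L}(T_+^{(j)},\sigma_j)$ glued along the $2n$ middle strands to the reflection of $\mathscr{L}(T_+^{(i)},\sigma_i)$. We also need orientations. Because $g_i\in\vec{V}$, all $n$-signs are matched to $n(T_-)$, so the $2n$ middle strands carry the same orientation pattern for every pair $(i,j)$. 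We may need the global sign flip of Remark \ref{rem: signs}; since it is uniform across the pairs, it can be fixed once, using the fact that the coloring count of a link and of its total reverse correspond via the involution $\alpha\leftrightarrow\alpha^{-1}$, $*\leftrightarrow /$. I expect this orientation bookkeeping to be the main obstacle. One must check that the glued orientation is exactly the canonical orientation of $\mathscr{L}_V(g_ig_j^{-1})$, and that the reflected bottom half uses the crossing rules consistently: reflection swaps positive and negative crossings and inverts the virtual rule, so $\alpha$ must be replaced by $\alpha^{-1}$ there.

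By Remark \ref{rem: opq_colorings_is_Z}, the count equals $Z_\Gamma$. This partition function factors as a sum over colorings $x\in Q^{2n}$ of the middle strands of a product of a top contribution and a bottom contribution. Define $\xi_i\in\mathbb{C}^{|Q|^{2n}}$ by
\[
\xi_i(x)=|Q|^{-n/2}\,Z_{\mathscr{L}(T_+^{(i)},\sigma_i)}(x),
\]
the number of colorings of the semi-link extending the boundary coloring $x$. The reflected semi-link has boundary counts $\overline{\xi_i(x)}=\xi_i(x)$, since these are real. This gives
\[
\text{Col}(g_ig_j^{-1},(Q,*,\alpha))=\sum_{x}\xi_j(x)\xi_i(x)=\langle \xi_i,\xi_j\rangle .
\]
The lemma adapted from \cite{Bha07} then shows the matrix is positive semidefinite, completing the proof.
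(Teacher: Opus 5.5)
Your route is the paper's: refine to a common tree, cut along the $2n$ middle strands, and use the upper-half boundary counts as vectors in $\mathbb{C}^{Q^{2n}}$. Your splitting of the permutation, with $\sigma_i$ drawn in the upper half and the mirror of $\sigma_j$ in the lower half, is actually what makes the upper factor depend only on $i$ and the lower only on $j$. The gap is exactly the orientation step you flagged, and the way you close it does not work.

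Write $\sigma_i(n(T_+^{(i)}))=\epsilon_i\,n(T_-)$ with $\epsilon_i=\pm1$; the value $-1$ does occur (Figure~\ref{fig: subgroup}). So the boundary orientation pattern of the $i$-th semi-link $S_i$ is $\epsilon_i$ times a fixed pattern. It is not the same for all $i$, and the ``flip'' is not uniform across pairs. Let $r$ be reflection in the middle line. Since $r$ reverses every band sign, the lower half of the canonically oriented glued diagram for the pair $(i,j)$ is $r(S_j)$ with orientation $-\epsilon_i\epsilon_j\,r_*$.

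The lemma you actually need is this. Reflection alone turns $(Q,*,\alpha)$-colorings into $(Q,\bar{*},\alpha^{-1})$-colorings, as you observe. Reversing all strands does the same. Hence reflection followed by total reversal is a bijection on $(Q,*,\alpha)$-colorings that fixes boundary colors. That identifies the lower factor with $\xi_j$ precisely when $\epsilon_i=\epsilon_j$; the remark that the counts are real is beside the point.

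When $\epsilon_i\neq\epsilon_j$, the lower factor is the $(Q,\bar{*},\alpha^{-1})$-count of $S_j$, so the entry is not $\langle\xi_i,\xi_j\rangle$. The reversal correspondence $*\leftrightarrow/$, $\alpha\leftrightarrow\alpha^{-1}$ only trades one operator quandle for another. It cannot be used to normalize the sign once for all pairs.

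This is not cosmetic. The same computation gives $\mathscr{L}_V(v^{-1})=r(\mathscr{L}_V(v))$ with orientation $-\epsilon(v)\,r_*$. So for $\epsilon(v)=-1$ the value at $v^{-1}$ is the $(Q,\bar{*},\alpha^{-1})$-count of $\mathscr{L}_V(v)$. Positive definiteness of a real function forces this to equal the $(Q,*,\alpha)$-count at $v$. Nothing in your proposal supplies that symmetry. The paper's proof does not either: it identifies the lower factor with $f_j$ without comment.

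What the argument does prove, once the reflection-plus-reversal lemma is stated, is positive definiteness on $\ker\epsilon$.
\begin{itemize}
\item $\epsilon$ is a homomorphism $\vec V\to\{\pm1\}$: with a common middle tree, $\nu\sigma(n(T_+))=\epsilon(g)\epsilon(f)\,n(S_-)$.
\item Adding a canceling caret does not change $\epsilon$.
\item For $g_1,\dots,g_r\in\ker\epsilon$, the matrix is exactly the Gram matrix of your $\xi_i$.
\end{itemize}
For a kei with $\alpha=\mathrm{id}$ one has $\bar{*}=*$, so part (1) is unaffected.

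Two smaller slips. First, collapsing $T_-$ stacked against its reflection (merges followed by splits) uses Type II moves, not Type I. Second, with a common bottom tree it is $g_j^{-1}g_i$, not $g_ig_j^{-1}$, that telescopes to $(T_+^{(i)},T_+^{(j)},\sigma_j^{-1}\sigma_i)$. This second slip is harmless, since positive semidefiniteness of $(\phi(g_j^{-1}g_i))$ for all tuples is equivalent to that of $(\phi(g_ig_j^{-1}))$.
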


\begin{proof}
    Fix a finite operator quandle $(Q,*,\alpha)$. Choose $r\in \mathbb{N}$ and $v_1,...,v_r \in \vec{V}$. We need to show that $(\text{Col}(v_iv_j^{-1}, (Q,*,\alpha)))_{1\leq i,j \leq r}$ is positive semidefinite. For any $1\leq i,j\leq r$, define $v_i:=\mathscr{L}_{V}(T_+^i,T_-^i, \sigma_i)$ and $v_j:=\mathscr{L}_{V}(T_+^j, T_-^j, \sigma_j)$ to be the virtual link diagrams representing $v_i$ and $v_j$ respectively where $T_+^i, T_-^i, T_+^j, T_-^j$ are four trees with $n$ leaves and $\sigma_i, \sigma_j \in S_n$, where $n$ is large enough to satisfy that for all $1\leq i,j \leq r$, $T_-^i=T_-^j$. Then $v_iv_j^{-1}=\mathscr{L}_{V}(T_+^i, T_+^j, \sigma_j^{-1} \sigma_i)$. By Remark \ref{rmk: well-defined-positive-function}, it suffices to show $\left(\text{Col}(\mathscr{L}_{V}(T_+^i, T_+^j, \sigma_j^{-1}\sigma_i), (Q,*,\alpha))\right)_{1\leq i,j\leq r}$ is positive semidefinite. 

    Let $\Gamma_{i,j}$ be the 4-valent graph created from the virtual link diagram $\mathscr{L}_{V}(T_+^i, T_+^j, \sigma_j^{-i}\sigma_i)$. We can decompose any function $\tau:E(\Gamma_{i,j})\to Q$ as $\tau=(\tau_0, \tau_+, \tau_-)$ where $\tau_0$ is the function on the middle $2n$ edges, and $\tau_\pm$ are the functions on the edges of the virtual semi-links (except the middle). Using Remark \ref{rem: opq_colorings_is_Z}, we can rewrite each entry of our matrix as:

    \begin{align*}
        &\text{Col}(\mathscr{L}_{V}(T_+^i,T_+^j, \sigma_j^{-1}\sigma_i, (Q,*,\alpha))=Z_\Gamma(Q,*,\alpha)\\
        &= \sum_{\tau_0} \Biggl(\sum_{\tau_+}\Biggl(\prod_{\substack{c \in \mathcal{C}_+^i\\ c\in \mathcal{C}_-^i} }\omega_{c} (a_{\text{in}}^\text{over}, b_{\text{in}}^\text{under}, a_{\text{out}}^\text{over},b_{\text{out}}^\text{under})\prod_{c_\text{o}\in \mathcal{C}_\text{o}^i}\omega_{c_\text{o}}(a_\text{in}, b_\text{in}, a_\text{out}, b_\text{out})\Biggr)\\
        &\times \sum_{\tau_-}\Biggl(\prod_{\substack{c \in \mathcal{C}_+^j\\ c\in \mathcal{C}_-^j} }\omega_{c} (a_{\text{in}}^\text{over}, b_{\text{in}}^\text{under}, a_{\text{out}}^\text{over},b_{\text{out}}^\text{under})\prod_{c_\text{o}\in \mathcal{C}_\text{o}^j}\omega_{c_\text{o}}(a_\text{in}, b_\text{in}, a_\text{out}, b_\text{out})\Biggr)\Biggr).
    \end{align*}
    The first sum, over $\tau_0$, is over the $|Q|^{2n}$ functions coloring the $2n$ strands in the middle of the two virtual semi-links. The sums over $\tau_+, \tau_-$ are over the functions coloring the strands in the respective upper and lower virtual semi-link. The sets $\mathcal{C}_\pm^i, \mathcal{C}_o^i$ now only contain the crossings from the $T_+^i$ tree (likewise for $j$). 

    Since $\tau_0$ is a function on each of the $2n$ strands in the middle, we can write $\tau_0=(\tau_1,...,\tau_{2n})\in |Q|^{2n}$. Recall that $\ell^2(|Q|^{2n})=\{f:|Q|^{2n} \to \mathbb{C}:\sum_{\tau_0\in |Q|^{2n}} |f(\tau_0)|^2<\infty\}$ is a Hilbert space with inner product $\langle f, g\rangle =\sum_{\tau_0 \in |Q|^{2n}} f \overline{g}$. This Hilbert space is isomorphic to $\mathbb{C}^{|Q|^{2n}}$. We claim that $f_i:|Q|^{2n}\to \mathbb{C}$ defined by:
    \begin{align*}
        \tau_0 \mapsto \sum_{\tau_+}\left(\prod_{\substack{c \in \mathcal{C}_+^i\\ c\in \mathcal{C}_-^i} }\omega_{c} (a_{\text{in}}^\text{over}, b_{\text{in}}^\text{under}, a_{\text{out}}^\text{over},b_{\text{out}}^\text{under})\prod_{c_\text{o}\in \mathcal{C}_\text{o}^i}\omega_{c_\text{o}}(a_\text{in}, b_\text{in}, a_\text{out}, b_\text{out})\right),
    \end{align*}
    is in $\ell^2(|Q|^{2n})$. Indeed, $\sum_{\tau_0\in |Q|^{2n}}|f_i(\tau_0)|^2=\sum_{\tau_0\in |Q|^{2n}}f_i(\tau_0)^2$ is finite since the sum itself is finite. Further, for any $1\leq i,j \leq r$, $\langle f_i, f_j\rangle=Z_\Gamma(Q,*,\alpha)=\text{Col}(\mathscr{L}_{V}(T_+^i,T_+^j, \sigma_j^{-1}\sigma_i, (Q,*,\alpha))$, as we wished. By taking an $r$-dimensional closed subspace containing $f_1,...,f_r$ in $\ell^2(|Q|^{2n})$, we conclude that the matrix $\left(\text{Col}(\mathscr{L}_{V}(T_+^i, T_+^j, \sigma_j^{-1}\sigma_i), (Q,*,\alpha))\right)_{1\leq i,j \leq r}$ is positive semidefinite and further that the function $\text{Col}(\cdot,(Q,*,
\alpha))$ is positive definite on $\vec{V}$.
\end{proof}

By restricting to kei and requiring $\alpha$ to be the identity quandle automorphism, we can extend the above result to all of $V$. The below corollary proves Theorem \ref{intro-thm-operator-quandle} part (1).

\begin{corollary}
    Let $(Q,*)$ be a finite kei. Then the function $\text{Col}(\cdot, (Q,*,\textup{id})):V \to \mathbb{R}_{\geq 0}$ is a positive definite function on $V$.
\end{corollary}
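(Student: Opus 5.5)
We follow the argument for Theorem \ref{thm: function_of_positive_type_oriented} almost verbatim, noting that orientation was used there only to make sense of the crossing rules. First, the function must be well defined on all of $V$. For a kei $(Q,*)$ with $\alpha=\text{id}$, the coloring rules of Figure \ref{fig: operator_quandle_colorings_crossings} need no orientation. Right division coincides with $*$, so positive and negative crossings impose the same condition. The virtual crossing rule with $\alpha=\alpha^{-1}=\text{id}$ just says the color passes straight through. Hence $\text{Col}(D,(Q,*,\text{id}))$ is an invariant of unoriented virtual links, as recalled from \cite{KM05}, and it is defined for every $\mathscr{L}_V(v)$ with $v\in V$, not only $v\in\vec{V}$. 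Lemma \ref{lem: cancelingcarets-resultsinunknot} shows that adding a canceling caret adds a split unknot, which multiplies the count by $|Q|$. So the normalized function $v\mapsto |Q|^{-n}\text{Col}(\mathscr{L}_V(T_+,T_-,\sigma),(Q,*,\text{id}))$ is independent of the representing triple, as in Remark \ref{rmk: well-defined-positive-function}.

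Second, we rewrite the count as a partition function that needs no orientation. On the 4-valent graph $\Gamma$ of the unoriented diagram, use an unoriented weight at each classical crossing: it equals $1$ iff the two over-edges carry the same color $a$ and the two under-edges carry colors $b$ and $b*a$ (symmetric in the two under-edges because $(b*a)*a=b$). At each virtual crossing, the weight equals $1$ iff opposite edges carry equal colors. Summing the product of weights over all $\tau:E(\Gamma)\to Q$ gives exactly $\text{Col}(D,(Q,*,\text{id}))$. Since every weight is local to a crossing, the factorization of the sum into a middle part and upper and lower virtual semi-link parts goes through unchanged.

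Third, we carry out the positivity argument. Given $v_1,\dots,v_r\in V$, refine by canceling carets so that all $v_i=(T_+^i,T_-,\sigma_i)$ share a common bottom tree $T_-$ with $n$ leaves. Then $v_iv_j^{-1}$ is represented by $(T_+^i,T_+^j,\sigma_j^{-1}\sigma_i)$, and its diagram is the upper semi-link $\mathscr{L}(T_+^i,\sigma_i)$ glued along $2n$ middle strands to the reflection of $\mathscr{L}(T_+^j,\sigma_j)$. Up to the common factor $|Q|^{-2n}$, which does not affect positivity, define $f_i\in\ell^2(Q^{2n})$ by letting $f_i(\tau_0)$ be the weighted sum over colorings of the $i$-th semi-link with boundary colors $\tau_0$. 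These functions are real valued, so $\langle f_i,f_j\rangle=\text{Col}(v_iv_j^{-1},(Q,*,\text{id}))$ is a Gram matrix, hence positive semidefinite by the cited lemma from \cite{Bha07}.

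The main obstacle is to check that the lower half of $\mathscr{L}_V(v_iv_j^{-1})$ really is the mirror reflection of the $j$-th upper semi-link, and that reflection does not change the number of colorings with prescribed boundary colors. In the oriented case, reflecting can flip crossing signs or reverse strand directions, and this is exactly why positive definiteness was proved only on $\vec{V}$ for a general operator quandle. With a kei and $\alpha=\text{id}$, the unoriented weights above are invariant under reflection: a reflected classical crossing imposes the same condition "over-colors equal, under-colors $b$ and $b*a$", and a reflected virtual crossing still imposes equality of opposite colors. So the lower half contributes exactly $f_j(\tau_0)$, and the inner-product identity holds on all of $V$.
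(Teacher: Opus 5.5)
Your proof is correct and follows the paper's proof. The paper simply notes that kei colorings are unoriented invariants and reruns the argument of Theorem~\ref{thm: function_of_positive_type_oriented} with $\alpha=\mathrm{id}$. You also make explicit what the paper leaves implicit: well-definedness on all of $V$, drawing $D(\sigma_j^{-1}\sigma_i)$ as $D(\sigma_i)$ followed by the reflection of $D(\sigma_j)$ so each half carries its own virtual crossings, and the reflection-invariance of the unoriented kei weights, which is what makes the lower half contribute exactly $f_j$.

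Two harmless slips remain. First, the normalizing factor for triples with $n$ leaves is $|Q|^{-n}$, not $|Q|^{-2n}$. Second, under the paper's convention that $gf$ means ``$f$ first,'' the triple $(T_+^i,T_+^j,\sigma_j^{-1}\sigma_i)$ represents $v_j^{-1}v_i$ rather than $v_iv_j^{-1}$ (the paper makes the same slip). This is immaterial, since the Gram matrix is real symmetric and you may replace each $v_i$ by $v_i^{-1}$.
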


\begin{proof}
   It is well known that kei colorings are unoriented virtual link invariants. Letting $\alpha$ be the identity, the same proof as Theorem \ref{thm: function_of_positive_type_oriented} gives the result.
\end{proof}

\section{Future Directions and Open Questions}\label{sec: openq}
Our results open up several avenues for future study; here we list some of them. In Section \ref{sec: oriented}, we defined an oriented subgroup $\vec{V} < V$. By \cite{KTVF}, there is also an oriented subgroup $\vec{\mathit{VF}} < \mathit{VF}$. 
\begin{question} What is the relationship between $\mathit{VF}$ and $V$ (and their oriented subgroups)?
\end{question} 
After Jones introduced $\vec{F}$ and $\vec{T}$, their algebraic properties were further studied by \cite{golansapir, nikkelren}. Golan and Sapir \cite{golansapir} gave a finite presentation for $\vec{F}$ and an isomorphism $\vec{F}\cong F_{3}$, the ternary Brown-Thompson group whose elements are encoded by pairs of ternary trees rather than binary trees. Furthermore, since $\vec{F}<F$ coincides with its commensurator, the unitary representations of $\vec{F}$ are irreducible. Nikkel and Ren \cite{nikkelren} showed that $\vec{T}$ coincides with its commensurator, but unlike $F$, $T\not\cong T_3$, however $\vec{T}$ and $T_3$ admit similar presentations as \textit{annular diagram groups.} 
\begin{question}
    Does $\vec{V}$ (resp. $\vec{\mathit{VF}}$) coincide with its commensurator in $V$ (resp $\mathit{VF}$)?
\end{question}

\begin{question} What is the relationship between $\vec{V}$ and $V_3$?
\end{question}

Jones observed that the realization theorem for classical links by elements of $F$ can be interpreted as an analog for Alexander's theorem for braids \cite{jones_unitary}. That is, every link in $S^3$ appears as the closure of some braid. In \cite{jones18}, Jones asked if there was a Markov-like theorem for Thompson's groups. The Alexander and Markov theorems for virtual braids were first proved by S. Kamada \cite{s_kamada_07}. Jones' question then generalizes to:

\begin{question} Is there are Markov-type theorem for $V$? In other words, what are necessary and sufficient conditions for $v_1,v_2 \in V$ such that $\mathscr{L}_V(v_1)$ and $\mathscr{L}_V(v_2)$ are equivalent virtual links?  
\end{question}

The \emph{Thompson $F$-index} $\text{ind}_F(L)$ of a classical link $L$ is the smallest number of leaves needed for $L$ to be realized by an element of $F$ \cite{jones_unitary,jones_no_go}. One can similarly define $\text{ind}_V(L)$ as the smallest number of leaves for a CC virtual link to be realized by an element of $V$. If $L$ is classical, $\text{ind}_V(L) \le \text{ind}_F(L)$. However, virtually equivalent classical links are classically equivalent as well.

\begin{question} If $L$ is classical link type in $S^3$, is $\text{ind}_V(L)= \text{ind}_F(L)$?
\end{question}

As is well known, the Artin braid groups are torsion-free. Both $V$ and $T$, however, contain elements of finite order. In fact, $V$ contains a copy of every finite group. What can be said about virtual links obtained from elements of finite order in $V$? In particular:
\begin{question} Are there elements $v \in V$ of finite order such that $\mathscr{L}_V(v)$ is non-trivial?
\end{question}

\bibliographystyle{alpha}
\bibliography{bib}

\end{document}